\newcommand{\divt}{\operatorname{div}}
\newcommand{\be}{\boldsymbol{\varepsilon}}
\newcommand{\bft}{\boldsymbol{f}}  %
\newcommand{\bn}{\boldsymbol{n}}
\newcommand{\bw}{\boldsymbol{w}}
\newcommand{\bu}{\boldsymbol{u}}
\newcommand{\bv}{\boldsymbol{v}}
\newcommand{\bK}{\boldsymbol{K}}
\newcommand{\bM}{\boldsymbol{M}}
\newcommand{\bN}{\boldsymbol{\sigma}}
\newcommand{\bV}{\boldsymbol{V}}
\newcommand{\bvar}{\boldsymbol{\varepsilon}} %
\newcommand{\bsig}{\boldsymbol{\sigma}}
\newcommand{\btau}{\boldsymbol{\tau}}
\newcommand*{\rom}[1]{\expandafter\@slowromancap\romannumeral #1@}
\crefname{hypothesis}{Hypothesis}{Hypotheses}
\title{The Coupling of Mixed and Primal Finite Element Methods for the Coupled Body-Plate Problem\thanks{The manuscript has been submitted to editors.\funding{The first author was supported by the National Natural Science Foundation of China under grant No. 12288101. The third author was supported by NSFC project No. 12301466 and by NSFC project No. 12422114.}}}
\author{Jun Hu \thanks{LMAM and School of Mathematical Sciences, Peking University, Beijing 100871, P. R. China. Chongqing Research Institute of Big Data, Peking University, Chongqing 401332, P. R. China. (\email{hujun@math.pku.edu.cn}).}
\and Zhen Liu \thanks{LMAM and School of Mathematical Sciences, Peking University, Beijing 100871, P. R. China. Chongqing Research Institute of Big Data, Peking University, Chongqing 401332, P. R. China. (\email{zliu37@pku.edu.cn}).} 
\and Rui Ma \thanks{School of Mathematics and Statistics, Beijing Institute of Technology, Beijing 100081, P. R. China. (\email{rui.ma@bit.edu.cn}).}}
\begin{document}

\maketitle
\begin{abstract}
This paper considers the coupled problem of a three-dimensional elastic body and a two-dimensional plate, which are rigidly connected at their interface. The plate consists of
a plane elasticity model along the longitudinal direction and a plate bending model with Kirchhoff assumptions along the transverse direction. The Hellinger–Reissner formulation is adopted for the body by introducing the stress as an auxiliary variable, while the primal formulation is employed for the plate. The well-posedness of the weak formulation is established. This approach enables direct stress approximations and allows for non-matching meshes at the interface since the continuity condition of displacements acts as a natural boundary condition for the body. Under certain assumptions, discrete stability and error estimates are derived for both conforming and nonconforming finite element methods. Two specific pairs of conforming and nonconforming finite elements are shown to satisfy the required assumptions, respectively. Furthermore, the problem is reduced to an interface problem based on the domain decomposition, which can be solved effectively by a conjugate gradient iteration. Numerical experiments are conducted to validate the theoretical results.
\end{abstract}
\begin{keywords}
coupled body-plate model, Kirchhoff plate, mixed finite element method, nonconforming finite element method, non-matching mesh
\end{keywords}
\begin{MSCcodes}
65N15, 65N30, 74S05
\end{MSCcodes}


\section{Introduction}

This paper focuses on the problem of the coupled body-plate, a significant structure in engineering applications such as H-shaped beams embedded in elastic foundations or satellite designs \cite{Ciarlet1989JMPA}. Mathematical descriptions of the body-plate model with various model assumptions have been proposed, see, e.g., \cite{Ciarlet1989JMPA, Fengshi1996, jianguo2005some, arango1998some, wang1992mathematical}. In this paper, the body is modeled as a three-dimensional linear elastic structure, while the plate is composed of a plane elasticity model along the longitudinal direction and a plate bending model with Kirchhoff assumptions along the transverse direction \cite{jianguo2005some}. These two components are coupled by rigid junction conditions.

Since the Kirchhoff plate model requires $C^1$-continuous finite element discretizations, preserving conformity along the interface poses a significant challenge, even when conforming elements are utilized for the plate discretization \cite{aufranc1989numerical, jianguo2005some}. Huang et al. established discrete stability and provided error estimates by employing nonconforming finite element methods to discretize the Kirchhoff plate model \cite{huang2007finite, huang2006finite, huang2011NMPDE, chen2009ap}. Although displacement-based finite element methods are easy to implement, they exhibit limitations in enforcing junction conditions efficiently, especially when meshes are non-matching.
Some additional treatments are required to deal with the coupling with non-matching meshes, e.g., using intermediate finite elements \cite{bournival2010mesh, klarmann2022coupling}, Nitsche’s method \cite{hansbo2005nitsche, hansbo2022nitsche, nguyen2014nitsche, yamamoto2019numerical}, static condensation \cite{shim2002mixed, song2010rigorous}, or mortar approaches\cite{ljulj20193d, ljulj2021numerical, steinbrecher2020mortar}.

In \cite{hu2024direct, wieners1998coupling}, authors used mixed finite element methods in one subdomain and conforming finite elements in the other for solving the problem on two disjoint subdomains. The method allows for non-matching meshes on the interface, eliminating the need for strict conformity and enhancing computational efficiency. Motivated by this idea, this paper establishes a new mixed weak formulation for the mixed-dimensional problem: the coupled body-plate model. Specifically, by introducing the stress as an independent variable in $H(\divt;\mathbb{S})$, the space of square-integrable symmetric tensors with square-integrable divergence, this paper utilizes the Hellinger-Reissner variational principle \cite{ArnoldWinther2002} for the body, while retaining the primal formulation for the plate. A dual product in different dimensions is rigorously defined by extending plate functions to the boundary of the body. Based on this dual product, the well-posedness of the weak formulation is proved, and its equivalence with the displacement-based formulation is provided. Consequently, the continuity condition of displacements poses a natural boundary condition for the mixed finite element method on the body. Furthermore, the proposed method yields direct approximations of the body stress; see \cite{hu2025mixed} for a mixed formulation of the coupled plates model, where stresses and moments were treated as independent variables.  

Under appropriate assumptions, this paper develops a general framework for finite element methods, together with discrete stability analysis and error estimates for both conforming and nonconforming schemes. As an example of conforming finite element methods, this paper provides the conforming $H(\divt;\mathbb{S})$ element in \cite{Hu2015} for the body, the vectorial Lagrange element for the plane elasticity model and the Argyris element \cite{Argyris1968}  for the plate bending model. Analogously, 
the nonconforming $H(\divt;\mathbb{S})$ element in \cite{HuMa2018}, the vectorial Lagrange element and the Morley element are used to give an example of nonconforming finite element methods. It is noteworthy that the discrete stability analysis and error estimates for the nonconforming method remain valid on non-matching meshes without imposing restrictive conditions. The proof relies on an inequality that bounds the dual product between a nonconforming $H(\divt; \mathbb{S})$ function and an $H^1$ function, which constitutes a new theoretical contribution of independent interest. Based on this inequality, an interpolation operator is introduced to map $H^2$ nonconforming finite element spaces to continuous spaces on the interface, following ideas from \cite{arnold1985mixed, veeser2019quasi}. The new weak formulation allows using tetrahedral or cubic meshes for the body and triangular or rectangular meshes for the plate, without requiring mesh type consistency between the interface. Interested readers can refer to \cite{Hu20152, hu2014simple} and \cite{adini1960analysis, arnold2005rectangular} for conforming $H(\divt;\mathbb{S})$ finite elements and $C^1$ continuous elements, respectively and refer to \cite{arnold2014nonconforming, gopalakrishnan2011symmetric, hu2016simplest} for nonconforming $H(\divt;\mathbb{S})$ finite elements. Numerical experiments with matching and non-matching meshes at the interface are presented to validate theoretical results. Furthermore, this paper provides a conjugate gradient iteration for the reduced interface problem of the coupled body-plate model and shows its efficiency, analogously as in \cite{lazarov2001iterative}. Non-overlapping domain decomposition methods for the displacement-based formulation can be found in \cite{huang2004numerical}.
 
The rest of this paper is organized as follows: Section 2 presents notations, the mathematical model, and the displacement-based formulation for the coupled body-plate model. The new mixed weak formulation and its well-posedness are established in Section 3. In Section 4,  frameworks of conforming and nonconforming finite element methods  and their examples are provided, respectively. Under some assumptions, the discrete stability and error estimates are proved. Some numerical experiments are presented in Section 5 to justify our theoretical results and the effectiveness of the proposed method.


\section{Preliminaries}
\label{Pre}

This section presents the model assumptions and notation conventions in this paper. The displacement-based formulation for the coupled body-plate model is recalled for subsequent discussions.

\subsection{Hypotheses and notations}

This paper considers the coupled problem of a body $\alpha$ and a plate $\beta$ with a rigid junction $\Gamma$, which is shown in Figure \ref{figmodel}. In this paper, $\alpha$ is a polyhedron in $\mathbb{R}^3$ and $\beta$ is a polygon in $\mathbb{R}^2$. The global right-handed orthogonal coordinate $(x_1,x_2,x_3)$ is fixed in the space with $x_1$ and $x_2$ the longitudinal directions of $\beta$, and $x_3$ the transverse direction of $\beta$. 
Let $\bn^\alpha$ and $\bn^\beta$ denote the unit outward normal vector of $\partial \alpha$ and $\partial \beta$, respectively, and let $\partial_{n^{\beta}}(\bullet) \coloneqq \nabla(\bullet) \cdot \bn^\beta$.

\begin{figure}[ht]
\centering
\begin{tikzpicture}[scale=1.5]
\draw[-stealth, thick] (-1.8,-0.5,1) -- (-1.8+0.7,-0.5,1) node[pos=0.7, above]{$x_2$};
\draw[-stealth, thick] (-1.8,-0.5,1) -- (-1.8,-0.5+0.7,1) node[pos=1, left]{$x_3$};
\draw[-stealth, thick] (-1.8,-0.5,1) -- (-1.8,-0.5,1+0.7) node[pos=1, right]{$x_1$};
\draw[thick] (-2.1,-0.5,-1) -- (-1.25,-0.5,-1);
\draw[dashed] (-1.25,-0.5,-1) -- (0.1,-0.5,-1);
\draw[thick] (0.1,-0.5,-1) -- (2.1,-0.5,-1);
\draw[thick] (-2.1,-0.5,2) -- (2.1,-0.5,2);
\draw[thick] (2.1,-0.5,2)-- (2.1,-0.5,-1);
\draw[thick] (-2.1,-0.5,-1)-- (-2.1,-0.5,2);
\node at (1.8,-0.5,-0.6) {\(\beta\)}; 
\coordinate (A) at (-0.5,-0.5,0);
\coordinate (B) at (0.5,-0.5,0);
\coordinate (C) at (0.5,0.5,0);
\coordinate (D) at (-0.5,0.5,0);
\coordinate (A1) at (-0.5,-0.5,1);
\coordinate (B1) at (0.5,-0.5,1);
\coordinate (C1) at (0.5,0.5,1);
\coordinate (D1) at (-0.5,0.5,1);
\fill[gray!20,opacity=0.5] (A) -- (A1) -- (B1) -- (B) -- cycle;
\draw[dashed] (A) -- (B) -- (C) -- (D) -- cycle; 
\draw[dashed] (A) -- (A1); 
\draw[thick]  (B) -- (C);
\draw[thick]  (C) -- (D);
\draw[thick] (A1) -- (B1) -- (C1) -- (D1) -- cycle; 
\draw[thick] (B1) -- (B); 
\draw[thick] (C1) -- (C); 
\draw[thick] (D1) -- (D);
\node at (0,0.3,0) {\(\alpha\)};
\node at (0,-0.6,0.3) {\(\Gamma\)};
\end{tikzpicture}
\caption{Coupled body-plate with a global coordinate.}
\label{figmodel}
\end{figure}
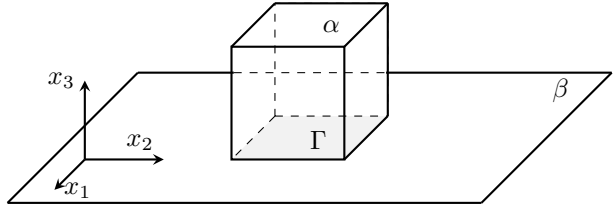

Given a bounded domain $G$, let $H^{m}(G;X)$ denote the Sobolev space of functions of $L^{2}(G; X)$ within domain $G$, taking values in space $X$, whose distributional derivatives up to the order $m$ also belong to $L^{2}(G; X)$. When $m=0, H^0(G;X)$ is exactly $L^2(G;X)$. Let $C^m(G; X)$ denote the space of $m$-times continuously differentiable functions and $P_{l}(G; X)$ denote the set of all the polynomials with the total degree no greater than $l$. In this paper, $X$ could be $\mathbb{R}$, $\mathbb{R}^{2}$, $\mathbb{R}^{3}$ or $\mathbb{S}$, where $\mathbb{S}$ denotes the set of all symmetric $\mathbb{R}^{3\times 3}$ matrices. If $X=\mathbb{R}$, then $H^{m}(G)$ abbreviates $H^{m}(G; X)$, similarly for $C^m(G), P_{l}(G)$. The standard Sobolev norm $\|\bullet\|_{m,G}$ and semi-norm $|\bullet|_{m,G}$ will be taken. The $L^2$ inner product is denoted by $(\bullet, \bullet)_G$ for the scalar, vector-valued, and tensor-valued $L^2$ spaces over $G$. Specially, let $\langle \bullet, \bullet \rangle_{\partial \alpha}$ denote the duality pairing between $H^{\frac{1}{2}}(\partial \alpha;\mathbb{R}^3)$ and its dual $H^{-\frac{1}{2}}(\partial \alpha;\mathbb{R}^3)$. Define the following Hilbert spaces:
\begin{equation}
\label{def:norms}
\begin{aligned}
H(\divt,\alpha; \mathbb{S}) &\coloneqq  \{ \btau^\alpha \in L^2(\alpha;\mathbb{S}); \divt \btau^\alpha \in L^2(\alpha;\mathbb{R}^3) \}, \\
H^1_0(\beta) & \coloneqq \{  v^{\beta} \in H^1(\beta);  v^{\beta} = 0 \text{ on } \partial \beta \},\\
H^{2}_0(\beta) &\coloneqq \{ v^{\beta} \in H^2(\beta);  v^{\beta} = 0,  \partial_{n^{\beta}} v^{\beta} =0 \text{ on } \partial \beta  \},\\
\end{aligned}
\end{equation}
each equipped with the norm $\|\btau^\alpha\|_{\divt,\alpha}^2\coloneqq \|\btau^\alpha\|_{0,\alpha}^2+\|\divt \btau^\alpha\|_{0,\alpha}^2$, $|v^\beta|_{1,\beta}$ and $|v^\beta|_{2,\beta}$, respectively. 

Throughout this paper, $C$ denotes a generic positive constant that is independent of the functions involved in the inequality and the mesh parameters of the triangulation. Note that $C$ can take different
values in different places. 

\subsection{Mathematical model}
This paper assumes the linear behavior of elastic material in $\alpha$, the plane elasticity model in the longitudinal direction of $\beta$, and the plate bending model in the transverse direction of $\beta$. 

Let $E_\omega> 0, \nu_\omega\in ( 0, \frac 12) $ denote Young's modulus, Poisson's ratio of the elastic member $\omega= \alpha, \beta$, respectively, $t_{\beta}$ be the thickness of plate $\beta$, $\delta_{ij}$ and $\delta_{IJ}$ stand for the usual Kronecker delta. Unless stated to the contrary, Latin indices $i, j, l$ take their values in the set ${1, 2, 3}$, while the capital Latin indices $I, J, L$ take their values in the set ${1,2}$. The summation convention whereby summation is implied when a Latin index (or a capital Latin index) is repeated exactly two times.

Given the displacement field $\bv^\alpha \in\mathbb{R}^3$ in $\alpha$, the strain $\bvar^\alpha(\bv^\alpha)$, the stress $\bsig^\alpha(\bv^\alpha)$ and its constitutive equation in $\alpha$ are
\begin{equation*}
	\begin{aligned}
		&\varepsilon_{ij}^\alpha({\bv^\alpha})\coloneqq( \partial v_j^\alpha/\partial x_i + \partial v_i^\alpha/\partial x_j)/2,\\
		&\sigma_{ij}^\alpha(\bv^\alpha)\coloneqq \frac{E_\alpha}{1+\nu_\alpha}\varepsilon_{ij}^\alpha(\bv^\alpha)+\frac{E_\alpha\nu_\alpha}{(1+\nu_\alpha)(1-2\nu_\alpha)}\varepsilon_{ll}^\alpha(\bv^\alpha)\delta_{ij} = \mathcal{C}_0 \varepsilon_{ij}^{\alpha}(\bv^\alpha). \\
		\end{aligned}
\end{equation*}
Given the displacement field  $\bv^\beta\in\mathbb{R}^3$ in $\beta$, the strain $\bvar^\beta(\bv^\beta)$, the stress $\bN^\beta(\bv^\beta)$ and its constitutive equation in $\beta$ for the plane elasticity model are
\begin{equation*}
\begin{aligned}
		&\varepsilon_{IJ}^\beta(\bv^\beta)\coloneqq (\partial v_J^\beta/\partial x_I+ \partial v_I^\beta/ \partial x_J )/2,\\
		&\sigma_{IJ}^{\beta}(\bv^\beta)\coloneqq \frac{E_{\beta} t_{\beta}}{1-\nu_{\beta}^{2}}((1-\nu_{\beta})\varepsilon_{IJ}^{\beta}(\bv^\beta)+\nu_{\beta}\varepsilon_{LL}^{\beta}(\bv^\beta)\delta_{IJ})=\mathcal{C}_1 \varepsilon_{IJ}^{\beta}(\bv^\beta), \\
\end{aligned}
\end{equation*}
the curvature $\bK^\beta(\bv^\beta)$, the moment $\bM^\beta(\bv^\beta)$ and its constitutive equation in $\beta$ for plate bending model are
\begin{equation}
\label{def:KandM}
\begin{aligned}
		&K_{IJ}^\beta(\bv^\beta)\coloneqq - {\partial^2v_3^\beta}/{\partial x_I\partial x_J}, \\
		&M_{IJ}^\beta(\bv^\beta)\coloneqq \frac{E_\beta t_\beta^3}{12(1-\nu_\beta^2)}( (1-\nu_\beta)K_{IJ}^\beta(\bv^\beta)+\nu_\beta K_{LL}^\beta(\bv^\beta) \delta_{IJ})=\mathcal{C}_2 K_{IJ}^{\beta}(\bv^\beta).
\end{aligned}
\end{equation}
Interested readers can refer to \cite{jianguo2005some, huang2006finite} for details.

For simplicity, in this paper, let $\partial \alpha \backslash \Gamma$ be the free boundary and $\partial \beta$ be the clamped boundary. Let $\Omega \coloneqq \{\alpha, \beta \}$ denote the whole domain. Given the load field $(\bft^{\alpha}, \bft^{\beta})$, the displacement field $(\bu^{\alpha}, \bu^{\beta})$ of the equilibrium configuration of $\Omega$ is concerned. The equilibrium equations \cite{arango1998some, jianguo2005some} of the coupled body-plate model can be written as
\begin{equation}
\label{equil}
\begin{aligned}
&& && -\divt \bsig^{\alpha}(\bu^\alpha) &= \bft^{\alpha}& &\text{ in } \alpha, &&&   \\
&& &&  - \partial \sigma_{IJ}^{\beta}/\partial x_J(\bu^\beta)  & =f_{I}^{\beta} & &\text{ on } \beta \backslash \Gamma , &&& \\
&& && - \partial^2 M_{IJ}^{\beta} /\partial x_{I}\partial x_{J} (\bu^\beta)&= f_{3}^{\beta} &&\text{ on } \beta \backslash \Gamma,&&& 
\end{aligned}    
\end{equation}
with boundary conditions 
\begin{equation}
\label{bd}
\begin{aligned}
&&  && \bsig^{\alpha}(\bu^\alpha) \bn^{\alpha}&=0   && \text{ on } \partial \alpha \backslash \Gamma, &&&   \\
&& && \bu^{\beta} = 0, \, \partial_{n^{\beta}} u_3^{\beta} &=0 & & \text{ on } \partial \beta, &&& 
\end{aligned}
\end{equation}
and junction conditions
\begin{subequations}
\label{jun}
\begin{align}
&& && \bu^\alpha &= \bu^\beta && \text{ on } \Gamma, &&&\label{jun1}  \\
&& && - \partial \sigma_{IJ}^{\beta}/ \partial x_J(\bu^\beta) &= f_{I}^{\beta} -	\sigma_{Ij}^{\alpha}(\bu^\alpha) n_j ^{\alpha} &&\text{ on } \Gamma,  &&&\label{jun2}  \\
 &&&&  -  \partial^2 M_{IJ}^{\beta} /\partial x_{I}\partial x_{J}(\bu^\beta) &= f_{3}^{\beta}-	\sigma_{3j}^{\alpha}(\bu^\alpha) n_j ^{\alpha} &&\text{ on } \Gamma. &&&\label{jun3}
\end{align}    
\end{subequations}
In this paper, the local coordinate systems on $\alpha$ and $\beta$ are assumed to coincide with the global coordinate system. However, the proposed method remains applicable when $\alpha$ and $\beta$ adopt distinct local coordinates. In such cases, the junction conditions \eqref{jun} are no longer expressed componentwise but involve transformation coefficients that relate the respective coordinate systems.

\subsection{The displacement-based formulation} This subsection recalls the variational formulation based on displacements from \cite{jianguo2005some}. 
Define the spaces for displacements in $\alpha$ and $\beta$ as follows:
\begin{equation}
\label{def:W-beta}
W^\alpha \coloneqq H^1(\alpha;\mathbb{R}^3), \, W^\beta \coloneqq W_{1}^\beta \times W_2^\beta \coloneqq H_0^1(\beta;\mathbb{R}^2) \times H^2_0(\beta). 
\end{equation}
Introduce the following space
\begin{equation}
\label{def: bV}
\bV \coloneqq \{ (\bv^\alpha, \bv^\beta) \in W^\alpha \times W^\beta;\,
\bv^{\alpha} = \bv^{\beta} \text{ on } \Gamma  \}.    
\end{equation}
The displacement-based formulation for the coupled body-plate model reads: Given $\bft^\alpha \in L^2(\alpha;\mathbb{R}^3)$ and $\bft^\beta \in L^2(\beta;\mathbb{R}^3)$, find $(\bu^\alpha,\bu^\beta) \in \bV$ such that 
\begin{equation}
\label{pro:displacement-based}
\begin{aligned}
&(\bsig^\alpha(\bu^\alpha),\bvar^\alpha(\bv^\alpha))_\alpha + (\bsig^\beta(\bu^\beta), \bvar^\beta(\bv^\beta))_\beta+ (\bM^\beta(\bu^\beta),\bK^\beta(\bv^\beta))_\beta \\
& = ({\bft}^\alpha, \bv^\alpha)_\alpha + (\bft^\beta, \bv^\beta )_\beta  \quad \text{ for all } (\bv^\alpha,\bv^\beta) \in \bV.    
\end{aligned}
\end{equation}

\begin{theorem}[{\cite[Theorem 2.2]{jianguo2005some}}]
Given $\bft^\alpha \in L^2(\alpha;\mathbb{R}^3)$ and $\bft^\beta \in L^2(\beta;\mathbb{R}^3)$, then there exists a unique function $(\bu^\alpha, \bu^\beta) \in \bV$ satisfying \eqref{pro:displacement-based}.
\end{theorem}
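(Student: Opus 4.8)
The plan is to read \eqref{pro:displacement-based} as a symmetric variational problem on the Hilbert space $\bV$ and to apply the Lax--Milgram lemma. Write the left-hand side as a bilinear form $a(\cdot,\cdot)$ on $\bV\times\bV$, the right-hand side as $\ell(\cdot)$, and use the natural norm given by $\|(\bv^\alpha,\bv^\beta)\|_{\bV}^2:=\|\bv^\alpha\|_{1,\alpha}^2+|v_1^\beta|_{1,\beta}^2+|v_2^\beta|_{1,\beta}^2+|v_3^\beta|_{2,\beta}^2$ (equivalent to the full product norm on the clamped plate spaces by Poincar\'e's inequality). First I would check that $\bV$ is a closed subspace of $W^\alpha\times W^\beta$: it is the kernel of the bounded map $(\bv^\alpha,\bv^\beta)\mapsto\bv^\alpha|_\Gamma-\bv^\beta|_\Gamma$ into $L^2(\Gamma;\mathbb{R}^3)$, continuity coming from the trace theorem for $H^1(\alpha;\mathbb{R}^3)$ and, on the plate side, from the trace of $H^1_0(\beta;\mathbb{R}^2)$ together with the Kirchhoff kinematics attaching $v_3^\beta\in H^2_0(\beta)$ and its gradient. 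Boundedness of $a$ and $\ell$ is then routine: the tensors $\mathcal{C}_0,\mathcal{C}_1,\mathcal{C}_2$ are bounded, so Cauchy--Schwarz gives $|a(\bu,\bv)|\le C\|\bu\|_{\bV}\|\bv\|_{\bV}$, and $|\ell(\bv)|\le\|\bft^\alpha\|_{0,\alpha}\|\bv^\alpha\|_{0,\alpha}+\|\bft^\beta\|_{0,\beta}\|\bv^\beta\|_{0,\beta}\le C\|\bv\|_{\bV}$.

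The essential step is coercivity. Since $\mathcal{C}_0,\mathcal{C}_1,\mathcal{C}_2$ are uniformly positive definite, $a((\bv^\alpha,\bv^\beta),(\bv^\alpha,\bv^\beta))\ge c\|(\bv^\alpha,\bv^\beta)\|_{\bV}^2$ would follow from
\[
\|\bvar^\alpha(\bv^\alpha)\|_{0,\alpha}^2+\|\bvar^\beta(\bv^\beta)\|_{0,\beta}^2+\|\bK^\beta(\bv^\beta)\|_{0,\beta}^2\ \ge\ c\,\|(\bv^\alpha,\bv^\beta)\|_{\bV}^2\quad\text{on }\bV.
\]
The plate contributions are classical: Korn's first inequality on $H^1_0(\beta;\mathbb{R}^2)$ controls $|v_1^\beta|_{1,\beta}^2+|v_2^\beta|_{1,\beta}^2$, and $\|\bK^\beta(\bv^\beta)\|_{0,\beta}=|v_3^\beta|_{2,\beta}$ by \eqref{def:KandM}. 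For the body, the free boundary $\partial\alpha\setminus\Gamma$ prevents bounding $\|\bv^\alpha\|_{1,\alpha}$ by $\|\bvar^\alpha(\bv^\alpha)\|_{0,\alpha}$ directly, so the rigid junction must be exploited; I would argue by contradiction. Suppose the inequality fails; take $(\bv_n^\alpha,\bv_n^\beta)\in\bV$ with $\|(\bv_n^\alpha,\bv_n^\beta)\|_{\bV}=1$ and left-hand side tending to $0$, extract a weakly convergent subsequence with limit $(\bv^\alpha,\bv^\beta)\in\bV$ (weak closedness of $\bV$), and upgrade to strong convergence in $W^\alpha\times W^\beta$ using Rellich's theorem together with Korn's second inequality on $\alpha$ and on $\beta$ and the identity $\|v_3^\beta\|_{2,\beta}^2=\|v_3^\beta\|_{1,\beta}^2+|v_3^\beta|_{2,\beta}^2$. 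Then $\|(\bv^\alpha,\bv^\beta)\|_{\bV}=1$ while $\bvar^\alpha(\bv^\alpha)=0$, $\bvar^\beta(\bv^\beta)=0$ and $D^2v_3^\beta=0$; hence $(v_1^\beta,v_2^\beta)\equiv0$ (an infinitesimal rigid plane motion in $H^1_0$), $v_3^\beta\equiv0$ (Hessian-free in $H^2_0$), and $\bv^\alpha$ is an infinitesimal rigid body motion of $\mathbb{R}^3$ with $\bv^\alpha|_\Gamma=\bv^\beta|_\Gamma=0$. Since $\Gamma$ has positive two-dimensional measure --- in particular it contains three non-collinear points --- a rigid body motion vanishing there is identically zero, so $\bv^\alpha\equiv0$, contradicting $\|(\bv^\alpha,\bv^\beta)\|_{\bV}=1$. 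This proves coercivity.

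With $\bV$ a Hilbert space, $a$ bounded and coercive, and $\ell\in\bV'$, the Lax--Milgram lemma (equivalently, since $a$ is symmetric, minimization of the associated energy) gives a unique solution $(\bu^\alpha,\bu^\beta)\in\bV$ of \eqref{pro:displacement-based}. The one genuine obstacle is the body coercivity: one must use that the only rigid body motion compatible with the (vanishing) clamped plate data along $\Gamma$ is the trivial one, which is what forces the compactness/contradiction argument above (or, equivalently, a Korn inequality on $\alpha$ with an added boundary term $\|\bv^\alpha\|_{0,\Gamma}$ that is then absorbed via $\|\bv^\alpha\|_{0,\Gamma}=\|\bv^\beta\|_{0,\Gamma}\le C\|\bv^\beta\|_{W^\beta}$); everything else is standard.
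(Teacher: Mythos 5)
The paper does not prove this theorem---it only cites it as Theorem 2.2 of the reference---so there is no internal argument to compare against. Your Lax--Milgram proof is the standard route to well-posedness for a transmission problem of this kind, and it is correct. The essential insight, which you identify, is that the rigid junction across $\Gamma$ together with the clamped plate boundary conditions rules out nontrivial rigid-body motions of $\alpha$: vanishing strain and curvature in $\beta$ plus $H_0^1(\beta;\mathbb{R}^2)\times H_0^2(\beta)$ forces $\bv^\beta\equiv 0$, and a rigid-body motion of $\alpha$ whose trace vanishes on the two-dimensional interface $\Gamma$ must vanish. Two small refinements worth noting: (i) the plate contributions are coercive directly---Korn's first inequality on $H_0^1(\beta;\mathbb{R}^2)$ plus Poincar\'e, and $\|\bK^\beta(\bv^\beta)\|_{0,\beta}\simeq |v_3^\beta|_{2,\beta}$ (equivalent up to a factor of $\sqrt{2}$, not equal, because the mixed second derivative is counted twice)---so the compactness/contradiction argument is only needed to bound the body term $\|\bv^\alpha\|_{1,\alpha}$; (ii) ``three non-collinear points'' is indeed sufficient, but the clean statement is that the evaluation map on the six-dimensional space of infinitesimal rigid motions $\{a+\omega\times x\}$ is injective once it is applied at three non-collinear points, since $\omega\times(p_1-p_2)=\omega\times(p_1-p_3)=0$ with $p_1-p_2,\,p_1-p_3$ linearly independent forces $\omega=0$ and then $a=0$.
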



\section{The mixed weak formulation}

This section introduces a new weak formulation for the coupled body-plate model. The weak formulation employs a mixed formulation for the body $\alpha$ by introducing the stress $\bsig^\alpha$ as an auxiliary variable and a primal formulation for the plate. 

Let $V^\alpha \coloneqq L^2(\alpha;\mathbb{R}^3)$. Define $H_{00}^{\frac{1}{2}}(\partial \alpha \backslash \Gamma; \mathbb{R}^3)$ as the set of $\bv^\alpha \in H^{\frac{1}{2}}(\partial \alpha \backslash \Gamma; \mathbb{R}^3)$ such that its zero extension to $\partial \alpha$ belongs to $H^{\frac{1}{2}}(\partial \alpha ;\mathbb{R}^3)$ \cite{lions2012non}. Define
\begin{equation}
\label{defSigmaalpha}
	\begin{aligned}
	\Sigma^{\alpha} \coloneqq  \{ \btau^{\alpha} \in H(\divt,\alpha; \mathbb{S}); \langle \btau^{\alpha}  \bn^\alpha, \bv^\alpha \rangle_{\partial \alpha}= 0 \text{ for all } \bv^\alpha \in H_{00}^{\frac{1}{2}}(\partial \alpha \backslash \Gamma;\mathbb{R}^3) \},
	\end{aligned}
\end{equation}
where $\bv^\alpha$ in the duality product denotes the zero extension of $\bv^\alpha \in H_{00}^{\frac{1}{2}}(\partial \alpha \backslash \Gamma;\mathbb{R}^3)$ to $\partial \alpha$. Let $\hat{\bv}^\beta \in H^{\frac{1}{2}}(\partial \alpha;\mathbb{R}^3)$ denote a bounded extension of the restriction of $\bv^\beta \in H^1(\beta;\mathbb{R}^3)$ on $\Gamma$ to $\partial \alpha$, namely, 
\begin{equation}
\label{ineq:bounded-extension-tildev}
\| \hat{\bv}^\beta \|_{\frac{1}{2},\partial \alpha} \leq C \| \bv^\beta \|_{1, \beta}.
\end{equation}

Recall the definition of $W^\beta$ in \eqref{def:W-beta}. The new weak formulation can be established as a symmetric saddle point system as follows: Given $\bft^\alpha \in L^2(\alpha;\mathbb{R}^3)$ and $\bft^\beta \in L^2(\beta;\mathbb{R}^3)$, find $(\bsig^{\alpha}, \bu^{\alpha}, \bu^{\beta}) \in \Sigma^\alpha \times V^{\alpha}\times W^\beta$ such that
\begin{equation}
\label{pro: weak}
\begin{aligned}
    (\mathcal{C}_0^{-1} \bsig^\alpha, \btau^{\alpha})_\alpha + (\divt \btau^{\alpha}, \bu^{\alpha})_\alpha -\langle \btau^{\alpha}  \bn^{\alpha}, \hat{\bu}^{\beta} \rangle_{\partial \alpha}&= 0, \\
 (\divt \bsig^{\alpha}, \bv^{\alpha})_\alpha &= -(\bft^{\alpha}, \bv^{\alpha})_\alpha, \\
    -\langle \bsig^\alpha \bn^{\alpha}, \hat{\bv}^{\beta} \rangle_{\partial \alpha} - (\bN^{\beta}(\bu^\beta), \be^{\beta}(\bv^\beta))_\beta - (\bM^{\beta}(\bu^\beta), \bK^{\beta}(\bv^\beta))_\beta &= - (\bft^{\beta}, \bv^{\beta})_\beta,
\end{aligned}
\end{equation}
for all $(\btau^\alpha,\bv^\alpha,\bv^\beta) \in \Sigma^\alpha \times V^{\alpha}\times W^\beta$. 

The junction conditions \eqref{jun} form $H^{-\frac{1}{2}}(\partial \alpha;\mathbb{R}^3) \times H^{\frac{1}{2}}(\partial \alpha;\mathbb{R}^3)$ pairs for both the primal formulation and the mixed formulation. Specifically, the junction conditions \eqref{jun2}--\eqref{jun3} related to $\bsig^\alpha \bn^\alpha$ act as the distributional external force for the primal formulation on $\beta$, and the junction condition \eqref{jun1} related to $\bu^\beta$ acts as the natural boundary condition for the mixed formulation on $\alpha$. The weak formulation \eqref{pro: weak} can be rewritten as an equivalent saddle point system
\begin{equation}
\label{formu-asym-mixed}
\begin{aligned}
a(\bsig^\alpha,\bu^\beta; \btau^\alpha, \bv^\beta) + b(\bu^\alpha; \btau^\alpha,\bv^\beta) &= (\bft^\beta, \bv^\beta)_\beta, \\
b(\bv^\beta; \bsig^\alpha,\bu^\beta) &= -(\bft^\alpha, \bv^\alpha)_\alpha,
\end{aligned}
\end{equation}
for all $(\btau^\alpha, \bv^\alpha, \bv^\beta) \in \Sigma^\alpha \times V^{\alpha}\times W^\beta$ with 
\begin{equation}
\label{formu-asym-mixed1}
\begin{aligned}
a(\bsig^\alpha,\bu^\beta; \btau^\alpha, \bv^\beta)  &\coloneqq (\mathcal{C}_0^{-1} \bsig^\alpha, \btau^{\alpha})_\alpha + (\bN^{\beta}(\bu^\beta), \be^{\beta}(\bv^\beta))_\beta\\
&\quad + (\bM^{\beta}(\bu^\beta), \bK^{\beta}(\bv^\beta))_\beta + \langle \bsig^\alpha \bn^{\alpha}, \hat{\bv}^{\beta} \rangle_{\partial \alpha} - \langle \btau^{\alpha}  \bn^{\alpha}, \hat{\bu}^{\beta} \rangle_{\partial \alpha}, \\
b(\bu^\alpha; \btau^\alpha,\bv^\beta) &\coloneqq (\divt \btau^{\alpha}, \bu^{\alpha})_\alpha. 
\end{aligned}
\end{equation}
Although \eqref{pro: weak} and \eqref{formu-asym-mixed} are equivalent, the compact form \eqref{formu-asym-mixed} is not symmetric because of the bilinear form $a(\bullet,\bullet;\bullet,\bullet)$ in \eqref{formu-asym-mixed1}. 
This paper analyzes the well-posedness of the proposed weak formulation \eqref{pro: weak} in terms of the nonsymmetric compact form \eqref{formu-asym-mixed}. Recall the definition of $\|\bullet\|_{\divt,\alpha}$ in \eqref{def:norms}. Introduce the norm
\begin{equation}
\label{formu-asym-norm}
\interleave (\btau^\alpha,\bv^\beta)  \interleave^2 \coloneqq \| \btau^\alpha \|_{\divt,\alpha}^2 + \sum_{I=1}^2|v_I^\beta|_{1,\beta}^2 + |v_3^\beta|_{2,\beta}^2 \text{ for all } (\btau^\alpha,\bv^\beta) \in \Sigma^\alpha \times W^\beta.
\end{equation}
The well-posedness of the weak formulation \eqref{pro: weak} can be shown as follows.
\begin{theorem}
\label{thm-wellposeconti}
The weak formulation \eqref{formu-asym-mixed} is well-posed.
\end{theorem}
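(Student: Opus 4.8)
The plan is to apply the generalized Babuška–Brezzi theory for nonsymmetric saddle point problems to the compact form \eqref{formu-asym-mixed}, with the pair $(\bsig^\alpha,\bu^\beta) \in \Sigma^\alpha \times W^\beta$ playing the role of the ``primal'' variable equipped with the norm $\interleave \cdot \interleave$, and $\bu^\alpha \in V^\alpha = L^2(\alpha;\mathbb{R}^3)$ playing the role of the ``dual'' (multiplier) variable with the $L^2$-norm. The three hypotheses to verify are: (i) continuity of $a(\cdot,\cdot;\cdot,\cdot)$ and $b(\cdot;\cdot,\cdot)$ on the respective product spaces; (ii) an inf-sup condition for $b$, i.e.\ $\sup_{(\btau^\alpha,\bv^\beta)} b(\bv^\alpha;\btau^\alpha,\bv^\beta)/\interleave(\btau^\alpha,\bv^\beta)\interleave \geq C\|\bv^\alpha\|_{0,\alpha}$; and (iii) coercivity of $a$ on the kernel $\mathcal{K} \coloneqq \{(\btau^\alpha,\bv^\beta): b(\bv^\alpha;\btau^\alpha,\bv^\beta)=0 \ \forall \bv^\alpha \in V^\alpha\} = \{(\btau^\alpha,\bv^\beta): \divt\btau^\alpha = 0\} \times W^\beta$.

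First I would dispatch continuity. For $b$ this is immediate from the definition of $\|\cdot\|_{\divt,\alpha}$. For $a$, the volume terms $(\mathcal{C}_0^{-1}\bsig^\alpha,\btau^\alpha)_\alpha$, $(\bN^\beta(\bu^\beta),\be^\beta(\bv^\beta))_\beta$, $(\bM^\beta(\bu^\beta),\bK^\beta(\bv^\beta))_\beta$ are bounded using boundedness of the constitutive operators $\mathcal{C}_0,\mathcal{C}_1,\mathcal{C}_2$; the coupling terms $\langle \btau^\alpha\bn^\alpha, \hat{\bv}^\beta\rangle_{\partial\alpha}$ are controlled by the normal trace inequality $\|\btau^\alpha\bn^\alpha\|_{-\frac12,\partial\alpha} \leq C\|\btau^\alpha\|_{\divt,\alpha}$ together with the bounded-extension estimate \eqref{ineq:bounded-extension-tildev}, giving $|\langle\btau^\alpha\bn^\alpha,\hat{\bv}^\beta\rangle_{\partial\alpha}| \leq C\|\btau^\alpha\|_{\divt,\alpha}\|\bv^\beta\|_{1,\beta}$. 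Next, the inf-sup for $b$: given $\bv^\alpha \in L^2(\alpha;\mathbb{R}^3)$, I would construct $\btau^\alpha \in \Sigma^\alpha$ with $\divt\btau^\alpha = -\bv^\alpha$ (or a suitable multiple) and $\|\btau^\alpha\|_{\divt,\alpha} \leq C\|\bv^\alpha\|_{0,\alpha}$, choosing $\bv^\beta = 0$. This is the classical surjectivity of $\divt : H(\divt,\alpha;\mathbb{S}) \to L^2(\alpha;\mathbb{R}^3)$; one solves an auxiliary linear elasticity problem $-\divt(\mathcal{C}_0\be^\alpha(\bw)) = \bv^\alpha$ in $\alpha$ with $\bw = 0$ on $\Gamma$ and traction-free on $\partial\alpha\setminus\Gamma$, whose well-posedness follows from Korn's inequality (the Dirichlet part $\Gamma$ having positive measure), and sets $\btau^\alpha = -\mathcal{C}_0\be^\alpha(\bw)$; by construction the tensor lies in $\Sigma^\alpha$ because its normal trace tests to zero against $H_{00}^{1/2}(\partial\alpha\setminus\Gamma;\mathbb{R}^3)$.

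The main obstacle is step (iii), the kernel coercivity of $a$, because the coupling terms $\langle\bsig^\alpha\bn^\alpha,\hat{\bv}^\beta\rangle_{\partial\alpha} - \langle\btau^\alpha\bn^\alpha,\hat{\bu}^\beta\rangle_{\partial\alpha}$ are antisymmetric and contribute nothing to $a(\btau^\alpha,\bv^\beta;\btau^\alpha,\bv^\beta)$, so on the diagonal one only sees $(\mathcal{C}_0^{-1}\btau^\alpha,\btau^\alpha)_\alpha + (\bN^\beta(\bv^\beta),\be^\beta(\bv^\beta))_\beta + (\bM^\beta(\bv^\beta),\bK^\beta(\bv^\beta))_\beta$. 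On the kernel $\divt\btau^\alpha=0$, so $\|\btau^\alpha\|_{\divt,\alpha}^2 = \|\btau^\alpha\|_{0,\alpha}^2$ and the first term controls it via coercivity of $\mathcal{C}_0^{-1}$. For the plate part, $(\bN^\beta(\bv^\beta),\be^\beta(\bv^\beta))_\beta \geq C\sum_{I=1}^2|\bv_I^\beta|_{1,\beta}^2$ follows from Korn's inequality on $H_0^1(\beta;\mathbb{R}^2)$, and $(\bM^\beta(\bv^\beta),\bK^\beta(\bv^\beta))_\beta \geq C|v_3^\beta|_{2,\beta}^2$ from the ellipticity of $\mathcal{C}_2$ together with the fact that $|v_3^\beta|_{2,\beta}$ is a norm on $H_0^2(\beta)$ (Poincaré). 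This establishes $a(\btau^\alpha,\bv^\beta;\btau^\alpha,\bv^\beta) \geq C\interleave(\btau^\alpha,\bv^\beta)\interleave^2$ on $\mathcal{K}$; since $a$ is bounded, the nonsymmetric inf-sup conditions for $a$ restricted to $\mathcal{K}\times\mathcal{K}$ follow from coercivity. With (i)--(iii) in hand, the generalized Brezzi theorem for nonsymmetric saddle point problems yields existence, uniqueness, and stability of the solution to \eqref{formu-asym-mixed}, hence well-posedness. A minor point to check along the way is that the right-hand side functionals $\bv^\beta \mapsto (\bft^\beta,\bv^\beta)_\beta$ and $\bv^\alpha \mapsto (\bft^\alpha,\bv^\alpha)_\alpha$ are bounded on the respective spaces, which is clear since $\bft^\alpha \in L^2(\alpha;\mathbb{R}^3)$, $\bft^\beta \in L^2(\beta;\mathbb{R}^3)$ and the norms dominate the relevant $L^2$-norms (using Poincaré for the $H_0^1$ and $H_0^2$ components).
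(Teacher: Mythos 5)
Your proposal is correct and follows essentially the same route as the paper: boundedness of $a$ and $b$ via the normal-trace and extension estimates, kernel coercivity of $a$ (noting the coupling terms cancel on the diagonal and $\divt\btau^\alpha=0$ on the kernel), the inf-sup for $b$ via surjectivity of $\divt$ onto $L^2(\alpha;\mathbb{R}^3)$ with $\bv^\beta=0$, and Brezzi's theory. The only (harmless) variation is that you build the preimage of $\bv^\alpha$ explicitly by solving an auxiliary mixed elasticity problem, where the paper simply cites a reference for the same fact.
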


\begin{proof}
It follows from \eqref{ineq:bounded-extension-tildev}, the trace inequality and Poincar\'e inequality that 
$$
\begin{aligned}
\langle \bsig^\alpha \bn^\alpha, \hat{\bv}^\beta \rangle_{\partial \alpha} 
& \leq  \| \bsig^\alpha \bn^\alpha \|_{-\frac{1}{2},\partial \alpha} \| \hat{\bv}^\beta \|_{\frac{1}{2}, \partial \alpha} 
\leq C \| \bsig^\alpha \bn^\alpha \|_{-\frac{1}{2},\partial \alpha} \| \bv^\beta \|_{1, \beta}  \\
&\leq C \| \bsig^\alpha  \|_{\divt,\alpha}  \| \bv^\beta \|_{1, \beta} 
\leq C \| \bsig^\alpha  \|_{\divt,\alpha}  | \bv^\beta |_{1, \beta}.
\end{aligned}
$$
This and the definition of $\interleave \bullet \interleave$ in \eqref{formu-asym-norm} lead to the boundedness of two bilinear forms $a(\bullet,\bullet; \bullet, \bullet)$ and $b(\bullet; \bullet,\bullet)$, namely, for all $(\bsig^\alpha,\bu^\beta),(\btau^\alpha,\bv^\beta)\in \Sigma^\alpha\times W^\beta$ and $\bv^\alpha \in V^\alpha$, 
$$
\begin{aligned}
|a(\bsig^\alpha,\bu^\beta; \btau^\alpha, \bv^\beta)| \leq &  C \interleave (\bsig^\alpha,\bu^\beta)  \interleave \interleave (\btau^\alpha,\bv^\beta)  \interleave,  \\
|b(\bv^\alpha; \bsig^\alpha,\bu^\beta)| \leq& \interleave (\bsig^\alpha,\bu^\beta)  \interleave  \| \bv^\alpha \|_{0,\alpha}.
\end{aligned}
$$
Define a subspace of $\Sigma^{\alpha}$ by 
\begin{equation}
\label{def:B}
B \coloneqq \{ \btau^\alpha \in \Sigma^{\alpha} ; (\divt \btau^\alpha, \bv^\alpha)_\alpha =0 \text { for all } \bv^\alpha \in V^{\alpha}\}.     
\end{equation}
Since $\divt \Sigma^\alpha \subset V^\alpha$, it holds that $\divt \btau^\alpha=0$ for any $\btau^\alpha \in B$ and 
\begin{equation}
\label{coercive-conti}
(\mathcal{C}_0^{-1} \btau^\alpha, \btau^{\alpha})_\alpha \geq C \|\btau^\alpha\|_{\divt,\alpha} \text{ for all } \btau^\alpha \in B.    
\end{equation}
For all $(\btau^\alpha, \bv^\beta) \in \Sigma^\alpha \times V^\beta$ satisfying $(\divt \btau^\alpha, \bv^\alpha)_\alpha =0$ for all $\bv^\alpha \in V^{\alpha}$, \eqref{def:B}--\eqref{coercive-conti} and Korn's inequality with the boundary conditions \eqref{bd} lead to
$$
\begin{aligned}
a(\btau^\alpha,\bv^\beta; \btau^\alpha, \bv^\beta) &= (\mathcal{C}_0^{-1} \btau^\alpha, \btau^{\alpha})_\alpha + (\bN^{\beta}(\bv^\beta), \be^{\beta}(\bv^\beta))_\beta + (\bM^{\beta}(\bv^\beta), \bK^{\beta}(\bv^\beta))_\beta     \\
& \geq C \interleave (\btau^\alpha,\bv^\beta)  \interleave^2,
\end{aligned}
$$
which indicates that the bilinear form $a(\bullet,\bullet; \bullet, \bullet)$ is coercive.
For any $\bv^\alpha \in V^{\alpha}$, there exists a $\btau^\alpha \in \Sigma^\alpha$ such that $\divt \btau^\alpha = \bv^\alpha$ and $\|\btau^\alpha \|_{\divt,\alpha} \leq C \| \bv^{\alpha}\|_{0,\alpha}$ \cite{brenner2008mathematical}. Then it holds that 
$$
\inf_{\bv^\alpha \in V^\alpha}  \sup_{\btau^\alpha \in \Sigma^\alpha} \frac{(\divt \btau^\alpha, \bv^\alpha)_\alpha}{ \| \btau^\alpha \|_{\divt,\alpha}   \|
\bv^\alpha \|_{0,\alpha}}   \geq \frac{1}{C}
\inf_{\bv^\alpha \in V^\alpha}  \frac{\|\bv^\alpha \|_{0,\alpha}^2}{\|\bv^\alpha \|_{0,\alpha}^2} = \frac{1}{C}.
$$
Thus, the inf-sup condition of $b(\bullet; \bullet,\bullet)$ below
$$
\inf_{\bv^\alpha \in V^\alpha}  \sup_{(\btau^\alpha,\bv^\beta)\in \Sigma^\alpha \times W^\beta} \frac{b(\bv^\alpha; \btau^\alpha,\bv^\beta)}{ \interleave (\btau^\alpha,\bv^\beta)  \interleave   \|
\bv^\alpha \|_{0,\alpha}}   \geq C
$$
can obtained by taking $\bv^\beta=0$ directly. 
Brezzi's theory \cite{boffi2013mixed} completes the proof.
\end{proof}

The subsequent theorem shows that the displacement-based formulation \eqref{pro:displacement-based} is equivalent to the new weak formulation \eqref{pro: weak}.

\begin{theorem}
Given $\bft^\alpha \in L^2(\alpha;\mathbb{R}^3)$ and $\bft^\beta \in L^2(\beta;\mathbb{R}^3)$, the problem \eqref{pro:displacement-based} and the problem \eqref{pro: weak} are equivalent in the following sense: If $(\bu^\alpha, \bu^\beta)$ solves \eqref{pro:displacement-based}, then $\bsig^\alpha = \mathcal{C}_0 \bvar^\alpha(\bu^\alpha) \in \Sigma^\alpha$  and $(\bsig^\alpha, \bu^\alpha, \bu^\beta)$ solves \eqref{pro: weak}. Conversely, if $(\bsig^\alpha, \bu^\alpha, \bu^\beta)$ solves \eqref{pro: weak}, then $(\bu^\alpha, \bu^\beta)$ solves \eqref{pro:displacement-based}.
\end{theorem}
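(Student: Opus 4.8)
The approach is to pass between \eqref{pro:displacement-based} and \eqref{pro: weak} using the Green's formula $(\btau^\alpha,\bvar^\alpha(\bv^\alpha))_\alpha = -(\divt\btau^\alpha,\bv^\alpha)_\alpha + \langle\btau^\alpha\bn^\alpha,\bv^\alpha\rangle_{\partial\alpha}$ on $\alpha$, valid for $\btau^\alpha\in H(\divt,\alpha;\mathbb{S})$ and $\bv^\alpha\in H^1(\alpha;\mathbb{R}^3)$, together with one elementary observation: if $\bw\in H^{\frac{1}{2}}(\partial\alpha;\mathbb{R}^3)$ vanishes on $\Gamma$, then $\bw$ coincides with the zero extension of $\bw|_{\partial\alpha\setminus\Gamma}$, so $\bw|_{\partial\alpha\setminus\Gamma}\in H_{00}^{\frac{1}{2}}(\partial\alpha\setminus\Gamma;\mathbb{R}^3)$ and hence $\langle\btau^\alpha\bn^\alpha,\bw\rangle_{\partial\alpha}=0$ for every $\btau^\alpha\in\Sigma^\alpha$ by \eqref{defSigmaalpha}. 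In particular, for $(\bv^\alpha,\bv^\beta)\in\bV$ the difference $\bv^\alpha|_{\partial\alpha}-\hat{\bv}^\beta$ vanishes on $\Gamma$, so that $\langle\btau^\alpha\bn^\alpha,\bv^\alpha\rangle_{\partial\alpha}=\langle\btau^\alpha\bn^\alpha,\hat{\bv}^\beta\rangle_{\partial\alpha}$ for all $\btau^\alpha\in\Sigma^\alpha$; this identity is the bridge carrying the interface data between the two formulations.

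First I would treat the forward implication. Given a solution $(\bu^\alpha,\bu^\beta)$ of \eqref{pro:displacement-based}, set $\bsig^\alpha\coloneqq\mathcal{C}_0\bvar^\alpha(\bu^\alpha)$. Testing \eqref{pro:displacement-based} with $(\bv^\alpha,0)$ for $\bv^\alpha\in C_0^\infty(\alpha;\mathbb{R}^3)$ shows $\divt\bsig^\alpha=-\bft^\alpha\in L^2(\alpha;\mathbb{R}^3)$, hence $\bsig^\alpha\in H(\divt,\alpha;\mathbb{S})$; testing with $(\bv^\alpha,0)$ for $\bv^\alpha\in H^1(\alpha;\mathbb{R}^3)$ whose trace is the zero extension of an arbitrary element of $H_{00}^{\frac{1}{2}}(\partial\alpha\setminus\Gamma;\mathbb{R}^3)$, and applying Green's formula, gives $\langle\bsig^\alpha\bn^\alpha,\bv^\alpha\rangle_{\partial\alpha}=0$, i.e. $\bsig^\alpha\in\Sigma^\alpha$. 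The second equation of \eqref{pro: weak} is $\divt\bsig^\alpha=-\bft^\alpha$ itself; the first follows from Green's formula applied to $(\btau^\alpha,\bu^\alpha)$ with $\btau^\alpha\in\Sigma^\alpha$ together with the boundary identity noted above; and the third follows by choosing any $\bv^\alpha\in H^1(\alpha;\mathbb{R}^3)$ with trace $\hat{\bv}^\beta$ on $\partial\alpha$ (so that $(\bv^\alpha,\bv^\beta)\in\bV$), inserting $(\bv^\alpha,\bv^\beta)$ into \eqref{pro:displacement-based}, rewriting $(\bsig^\alpha,\bvar^\alpha(\bv^\alpha))_\alpha$ by Green's formula, and observing that the volume term $(\bft^\alpha,\bv^\alpha)_\alpha$ cancels, leaving exactly the third equation (which, as it must, no longer depends on the chosen extension).

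For the converse implication, from a solution $(\bsig^\alpha,\bu^\alpha,\bu^\beta)$ of \eqref{pro: weak} the second equation again yields $\divt\bsig^\alpha=-\bft^\alpha$. Testing the first equation with $\btau^\alpha\in C_0^\infty(\alpha;\mathbb{S})$ identifies the distributional symmetric gradient of $\bu^\alpha$ with $\mathcal{C}_0^{-1}\bsig^\alpha\in L^2(\alpha;\mathbb{S})$; since already $\bu^\alpha\in L^2(\alpha;\mathbb{R}^3)$, a standard Ne\v{c}as/Korn-type regularity argument promotes $\bu^\alpha$ to $H^1(\alpha;\mathbb{R}^3)$ with $\bsig^\alpha=\mathcal{C}_0\bvar^\alpha(\bu^\alpha)$. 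With $\bu^\alpha\in H^1$ in hand, Green's formula applied to the first equation cancels the volume terms and leaves $\langle\btau^\alpha\bn^\alpha,\bu^\alpha-\hat{\bu}^\beta\rangle_{\partial\alpha}=0$ for all $\btau^\alpha\in\Sigma^\alpha$. Since the normal-trace map $H(\divt,\alpha;\mathbb{S})\to H^{-\frac{1}{2}}(\partial\alpha;\mathbb{R}^3)$ is surjective, every functional in $H^{-\frac{1}{2}}(\partial\alpha;\mathbb{R}^3)$ annihilating $H_{00}^{\frac{1}{2}}(\partial\alpha\setminus\Gamma;\mathbb{R}^3)$ is realized as $\btau^\alpha\bn^\alpha$ for some $\btau^\alpha\in\Sigma^\alpha$; a bipolar/duality argument then forces $\bu^\alpha|_{\partial\alpha}-\hat{\bu}^\beta$ into the $H^{\frac{1}{2}}(\partial\alpha)$-closure of $H_{00}^{\frac{1}{2}}(\partial\alpha\setminus\Gamma;\mathbb{R}^3)$, and in particular $\bu^\alpha=\hat{\bu}^\beta=\bu^\beta$ on $\Gamma$, so $(\bu^\alpha,\bu^\beta)\in\bV$. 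Finally, for arbitrary $(\bv^\alpha,\bv^\beta)\in\bV$, Green's formula and the boundary identity give $(\bsig^\alpha,\bvar^\alpha(\bv^\alpha))_\alpha=(\bft^\alpha,\bv^\alpha)_\alpha+\langle\bsig^\alpha\bn^\alpha,\hat{\bv}^\beta\rangle_{\partial\alpha}$, and substituting the third equation of \eqref{pro: weak} for the last term produces \eqref{pro:displacement-based}.

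The bookkeeping with Green's formula is routine; I expect the one delicate point — the main obstacle — to be the converse recovery of the junction condition $\bu^\alpha=\bu^\beta$ on $\Gamma$ from $\langle\btau^\alpha\bn^\alpha,\bu^\alpha-\hat{\bu}^\beta\rangle_{\partial\alpha}=0$, which relies on surjectivity of the symmetric-tensor normal trace together with a duality argument for the pair $\bigl(H_{00}^{\frac{1}{2}}(\partial\alpha\setminus\Gamma;\mathbb{R}^3),\,H^{-\frac{1}{2}}(\partial\alpha;\mathbb{R}^3)\bigr)$, and secondarily on the (comparatively standard) Korn-type step upgrading $\bu^\alpha$ from $L^2$ to $H^1$. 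This is precisely the analytic content behind the claim that displacement continuity at the interface is a \emph{natural} boundary condition for the mixed formulation on $\alpha$.
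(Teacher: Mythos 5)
Your proposal is correct and follows essentially the same route as the paper's own proof: both directions proceed by systematic application of the Green's formula on $\alpha$, the observation that $\langle\btau^\alpha\bn^\alpha,\bw\rangle_{\partial\alpha}=0$ for $\btau^\alpha\in\Sigma^\alpha$ whenever $\bw$ vanishes on $\Gamma$, the $C_0^\infty$ tests to recover $\divt\bsig^\alpha=-\bft^\alpha$ (resp.\ $\bvar^\alpha(\bu^\alpha)=\mathcal{C}_0^{-1}\bsig^\alpha$ and $\bu^\alpha\in H^1$), and the identity $\langle\btau^\alpha\bn^\alpha,\bu^\alpha-\hat{\bu}^\beta\rangle_{\partial\alpha}=0$ to recover the junction condition. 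The only real difference is that you flesh out the step the paper leaves implicit — deducing $\bu^\alpha=\bu^\beta$ on $\Gamma$ from the last identity via surjectivity of the $H(\divt;\mathbb{S})$ normal trace onto $H^{-1/2}(\partial\alpha;\mathbb{R}^3)$ and a bipolar/closure argument for $H_{00}^{1/2}(\partial\alpha\setminus\Gamma;\mathbb{R}^3)$ — which is a useful and correct elaboration rather than a different method.
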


\begin{proof}
Suppose that $(\bu^\alpha, \bu^\beta)$ solves \eqref{pro:displacement-based}. Then $\bsig^\alpha \coloneqq \mathcal{C}_0 \bvar^\alpha(\bu^\alpha) \in L^2(\alpha;\mathbb{S})$ and an integration by parts leads to
\begin{equation}
\label{eq:eqivalence1}
(\mathcal{C}_0^{-1} \bsig^\alpha, \btau^\alpha)_\alpha + (\divt \btau^\alpha, \bu^\alpha)_\alpha - \langle \btau^\alpha \bn^\alpha, \bu^\alpha \rangle_{\partial \alpha}  =0 \text{ for all } \btau^\alpha \in \Sigma^\alpha.   
\end{equation}
It follows from $\bu^\alpha=\bu^\beta$ on $\Gamma$ that $\bu^\alpha - \hat{\bu}^\beta \in H_{00}^{\frac{1}{2}}(\partial \alpha \backslash \Gamma;\mathbb{R}^3)$. This and the definition of $\Sigma^\alpha$ in \eqref{defSigmaalpha} gives
\begin{equation*}
\langle \btau^\alpha \bn^\alpha, \bu^\alpha \rangle_{\partial \alpha} = \langle \btau^\alpha \bn^\alpha, \bu^\alpha \rangle_{\partial \alpha} - \langle \btau^\alpha \bn^\alpha, \bu^\alpha - \hat{\bu}^\beta\rangle_{\partial \alpha}  = \langle \btau^\alpha \bn^\alpha, \hat{\bu}^\beta \rangle_{\partial \alpha} \text{ for all } \btau^\alpha \in \Sigma^\alpha.
\end{equation*}
Substituting this into \eqref{eq:eqivalence1} proves the first row of \eqref{pro: weak}.  
The choice of $(\bv^\alpha, 0) \in \bV$ with $\bv^\alpha \in C_0^\infty(\alpha; \mathbb{R}^3)$ on $\alpha$ in \eqref{pro:displacement-based} shows $-\divt \bsig^\alpha = \bft^\alpha$. This implies that $\bsig^\alpha \in H(\divt, \alpha; \mathbb{S})$ and the second row of \eqref{pro: weak} holds. The combination of this and \eqref{pro:displacement-based} shows   
\begin{equation}
\label{eq:eqivalence2}
\langle \bsig^\alpha \bn^\alpha, \bv^\alpha \rangle_{\partial \alpha} + (\bN^{\beta}(\bu^\beta), \be^{\beta}(\bv^\beta))_\beta + (\bM^{\beta}(\bu^\beta), \bK^{\beta}(\bv^\beta))_\beta = (\bft^\beta, \bv^\beta)_\beta,
\end{equation}
for all $(\bv^\alpha, \bv^\beta)\in \bV$. Thus, $\bsig^\alpha \in \Sigma^\alpha$ since
$$\langle \bsig^\alpha \bn^\alpha, \bv^\alpha \rangle_{\partial \alpha} =0 \text{ for all }(\bv^\alpha,0)\in \bV \text{ with }\bv^\alpha \in H_{00}^{\frac{1}{2}}(\partial\alpha \backslash \Gamma;\mathbb{R}^3).$$ 
This, $\bv^\alpha=\bv^\beta$ on $\Gamma$ and \eqref{eq:eqivalence2} prove the third row of \eqref{pro: weak}.

Suppose that $(\bsig^\alpha, \bu^\alpha, \bu^\beta)$ solves \eqref{pro: weak}. 
Since $C^{\infty}_0(\alpha; \mathbb{S}) \subset \Sigma^\alpha$, it follows from the first row of \eqref{pro: weak} that
$$
(\mathcal{C}_0^{-1} \bsig^\alpha, \btau^\alpha)_\alpha + (\bu^\alpha, \divt \btau^\alpha)_\alpha = 0 \text{ for all } \btau \in C^{\infty}_0(\alpha; \mathbb{S}).
$$
This and $\bsig^\alpha \in L^2(\alpha; \mathbb{S})$ imply $\bu^\alpha \in H^1(\alpha; \mathbb{R}^3)$ and $\bvar^\alpha (\bu^\alpha) = \mathcal{C}_0^{-1} \bsig^\alpha$. 
An integration by parts leads to
\begin{equation}
\label{eq:eqivalence4}
(\mathcal{C}_0^{-1} \bsig^\alpha, \btau^\alpha)_\alpha + (\divt \btau^\alpha, \bu^\alpha)_\alpha - \langle \btau^\alpha \bn^\alpha, \bu^\alpha \rangle_{\partial \alpha}  =0 \text{ for all } \btau^\alpha \in \Sigma^\alpha.   
\end{equation}
The combination of \eqref{eq:eqivalence4} and the first row of \eqref{pro: weak} shows 
$$
\langle \btau^\alpha \bn^\alpha, \bu^\alpha \rangle_{\partial \alpha} = \langle \btau^\alpha \bn^\alpha, \hat{\bu}^\beta \rangle_{\partial \alpha} \text{ for all } \btau^\alpha \in \Sigma^\alpha,
$$
which implies $\bu^\alpha = \bu^\beta$ on $\Gamma$. Since $W^\alpha \subset V^\alpha$ and $\bsig^\alpha = \mathcal{C}_0 \bvar^\alpha(\bu^\alpha) = \bsig^\alpha(\bu^\alpha)$, an integration by parts of the second row of \eqref{pro: weak} leads to
$$
- (\bsig^\alpha(\bu^\alpha), \bvar^\alpha(\bv^\alpha))_\alpha +  \langle \bsig^\alpha(\bu^\alpha)\bn^\alpha, \bv^\alpha\rangle_{\partial \alpha} = -(\bft^\alpha, \bv^\alpha)_\alpha \text{ for all } \bv^\alpha \in W^\alpha. 
$$
Noting that $\bv^\alpha = \bv^\beta$ on $\Gamma$ and $\bsig^\alpha \in \Sigma^\alpha$, the combination of this and the third row of \eqref{pro: weak} gives \eqref{pro:displacement-based}, which completes the proof.
\end{proof}


\section{Finite element methods}

This section provides conforming and nonconforming finite element methods for the new weak formulation  \eqref{pro: weak} of the coupled body-plate model. Under some assumptions, the discrete stability is established and error estimates are derived for them, respectively. Specific examples are presented for illustration. 

Let $\alpha$ and $\beta$ be subdivided by families of shape-regular tetrahedral grids $\mathcal{T}_h^\alpha$ and triangular grids $\mathcal{T}_h^\beta$, respectively. Note that the non-matching meshes on the interface $\Gamma$ are allowed.  Let $h_{K^\alpha}$ and $h_\alpha$ be the diameter of $K^\alpha$ and the maximum of the diameters of all the elements $K^\alpha \in \mathcal{T}_h^\alpha$, respectively. For convenience, $\bn^\alpha$ also denotes the unit outward normal vector of $K^\alpha$. The symbols $h_{K^\beta}$, $h_\beta$ and $\bn^\beta$ are defined similarly for $K^\beta \in \mathcal{T}_h^\beta$.  Denote by $\mathcal{F}(K^\alpha)$ the set of all faces of $K^\alpha$ and $h_{F}$ the diameter of $F \in \mathcal{F}(K^\alpha)$. Let $\mathcal{F}_\Gamma^\alpha$ denote the restriction of $\mathcal{T}_h^\alpha$ on $\Gamma$, which forms a triangulation of $\Gamma$ as follows:
\begin{equation}
\label{def:F-gamma}
\mathcal{F}_\Gamma^\alpha \coloneqq \left\{ F; F \subset \Gamma,  F \in \mathcal{F}(K^\alpha) \text{ for any } K^\alpha \in \mathcal{T}_h^\alpha \right\}.
\end{equation}

\subsection{Conforming finite element method}
\label{sub:conformingelement}

This subsection considers the conforming finite element method based on the weak formulation \eqref{pro: weak}. Choose the conforming discrete spaces as follows:
\begin{equation}
\Sigma^\alpha_h \subset \Sigma^\alpha, \,  V_h^\alpha \subset V^{\alpha}, \, W_{1h}^{\beta} \subset W_1^\beta, \, W_{2h}^{\beta} \subset W_2^\beta.
\end{equation}
In accordance with \eqref{def:W-beta}, let $W_h^\beta \coloneqq W_{1h}^\beta \times W_{2h}^\beta$.
Then the conforming finite element method reads:
Given $\bft^\alpha \in L^2(\alpha;\mathbb{R}^3)$ and $\bft^\beta \in L^2(\beta;\mathbb{R}^3)$, find $(\bsig_h^{\alpha}, \bu_h^{\alpha}, \bu_h^{\beta}) \in \Sigma_h^\alpha \times V_h^{\alpha}\times W_h^\beta$ such that 
\begin{equation}
\label{pro: weak-discrete-c}
\begin{aligned}
a(\bsig_h^\alpha,\bu_h^\beta; \btau_h^\alpha, \bv_h^\beta) + b(\bu_h^\alpha; \btau_h^\alpha,\bv_h^\beta) &= (\bft^\beta, \bv_h^\beta)_\beta,\\
b(\bv_h^\alpha; \bsig_h^\alpha,\bu_h^\beta) &= -(\bft^\alpha, \bv_h^\alpha)_\alpha,
\end{aligned}    
\end{equation}
for all $(\btau_h^\alpha, \bv_h^\alpha,\bv_h^\beta) \in \Sigma_h^\alpha \times V_h^{\alpha}\times W_h^\beta$. 
Since $\btau_h^\alpha \bn^\alpha|_{\partial \alpha \backslash \Gamma}=0$ and $\btau_h^\alpha \bn^\alpha \in L^2(\Gamma;\mathbb{R}^3)$, the duality product $\langle\btau_h^\alpha \bn^\alpha, \hat{\bu}_h^\beta\rangle_{\partial \alpha}$ in $a(\bsig_h^\alpha,\bu_h^\beta; \btau_h^\alpha, \bv_h^\beta)$ of \eqref{pro: weak-discrete-c} becomes the $L^2$ inner product $(\btau_h^\alpha \bn^\alpha, \bu_h^\beta)_{\Gamma}$. Similarly, $\langle \bsig_h^\alpha \bn^\alpha, \hat{\bv}_h^\beta\rangle_{\partial \alpha}$ becomes $(\bsig_h^\alpha \bn^\alpha, \bv_h^\beta )_{\Gamma}$.

Note that the classical conforming element spaces in $H^1$ and $H^2$ are employed for the discrete spaces $W_{1h}^\beta$ and $W_{2h}^\beta$ in $\beta$, respectively, while the conforming mixed element space in $H(\divt;\mathbb{S})\times L^2$ utilized for the discrete space $\Sigma_h^\alpha \times V_h^\alpha$ in $\alpha$. 
Similar to the weak formulation in \eqref{formu-asym-mixed}, the discrete formulation in \eqref{pro: weak-discrete-c} can be rewritten as a symmetric saddle point system, allowing straightforward application of a wide range of standard solvers.

To show the discrete stability of the conforming finite element method, assume that the conforming finite element pair $\Sigma_h^\alpha \times V_h^\alpha$ on $\alpha$ is stable. Namely, the coercivity holds as follows:
\begin{equation}
\label{ass: A1}
(\mathcal{C}_0^{-1} \btau_h^\alpha, \btau_h^\alpha)_{\alpha} \geq C \| \btau_h^\alpha \|_{\divt,\alpha} \text{ for all } \btau_h^\alpha \in B_h^c,    
\end{equation}
with 
$$
B_h^c \coloneqq \{ \btau_h^\alpha \in \Sigma_h^\alpha; (\divt \btau_h^\alpha, \bv_h^\alpha)_\alpha=0 \text{ for all }  \bv_h^\alpha \in V_h^\alpha \},
$$
and the inf-sup condition holds as follows:
\begin{equation}
\label{ass: A}
\inf_{\bv_h^\alpha \in V_h^\alpha} \sup_{\btau_h^\alpha \in \Sigma_h^\alpha} \frac{(\divt \btau_h^\alpha, \bv_h^\alpha)_\alpha}{\| \bv_h^\alpha\|_{0,\alpha} \| \btau_h^\alpha\|_{\divt,\alpha}} \geq C.    
\end{equation}

\begin{theorem}
 \label{thm-wellposediscrete}
    Problem \eqref{pro: weak-discrete-c} is well-posed under assumptions \eqref{ass: A1}--\eqref{ass: A}.
\end{theorem}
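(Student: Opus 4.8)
The plan is to verify the Brezzi conditions for the discrete bilinear forms $a(\cdot,\cdot;\cdot,\cdot)$ and $b(\cdot;\cdot,\cdot)$ restricted to $\Sigma_h^\alpha\times W_h^\beta$ and $V_h^\alpha$, reproducing the structure of the proof of Theorem~\ref{thm-wellposeconti} but with the continuous surjectivity arguments there replaced by the hypotheses \eqref{ass: A1}--\eqref{ass: A}. The boundedness of $a$ and $b$ is immediate: since $\Sigma_h^\alpha\subset\Sigma^\alpha$, $V_h^\alpha\subset V^{\alpha}$ and $W_h^\beta\subset W^\beta$, the continuity estimates established in the proof of Theorem~\ref{thm-wellposeconti}---which rest only on \eqref{ineq:bounded-extension-tildev}, the trace inequality and Poincar\'e's inequality---hold verbatim on the discrete spaces with the same constants, so that $|a(\bsig_h^\alpha,\bu_h^\beta;\btau_h^\alpha,\bv_h^\beta)|\le C\interleave(\bsig_h^\alpha,\bu_h^\beta)\interleave\,\interleave(\btau_h^\alpha,\bv_h^\beta)\interleave$ and $|b(\bv_h^\alpha;\bsig_h^\alpha,\bu_h^\beta)|\le\interleave(\bsig_h^\alpha,\bu_h^\beta)\interleave\,\|\bv_h^\alpha\|_{0,\alpha}$.

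For coercivity on the discrete kernel, I would first observe that $b(\bv_h^\alpha;\btau_h^\alpha,\bv_h^\beta)$ does not involve $\bv_h^\beta$, so the kernel consists of all $(\btau_h^\alpha,\bv_h^\beta)$ with $\btau_h^\alpha\in B_h^c$ and $\bv_h^\beta\in W_h^\beta$ arbitrary. Testing $a$ against the same argument, the two antisymmetric duality pairings $\langle\bsig_h^\alpha\bn^\alpha,\hat{\bv}_h^\beta\rangle_{\partial\alpha}$ and $-\langle\btau_h^\alpha\bn^\alpha,\hat{\bu}_h^\beta\rangle_{\partial\alpha}$ cancel, leaving
\[
a(\btau_h^\alpha,\bv_h^\beta;\btau_h^\alpha,\bv_h^\beta)=(\mathcal{C}_0^{-1}\btau_h^\alpha,\btau_h^\alpha)_\alpha+(\bN^{\beta}(\bv_h^\beta),\be^{\beta}(\bv_h^\beta))_\beta+(\bM^{\beta}(\bv_h^\beta),\bK^{\beta}(\bv_h^\beta))_\beta.
\]
The first term is bounded below by $C\|\btau_h^\alpha\|_{\divt,\alpha}^2$ precisely by \eqref{ass: A1}, and the plane-elasticity and bending terms are bounded below by $C\big(\sum_{I=1}^2|v_{Ih}^\beta|_{1,\beta}^2+|v_{3h}^\beta|_{2,\beta}^2\big)$ using the positive definiteness of $\mathcal{C}_1$, $\mathcal{C}_2$ together with Korn's inequality under the clamped conditions \eqref{bd}, exactly as in the continuous case. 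Summing yields $a(\btau_h^\alpha,\bv_h^\beta;\btau_h^\alpha,\bv_h^\beta)\ge C\interleave(\btau_h^\alpha,\bv_h^\beta)\interleave^2$.

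Finally, for the discrete inf-sup condition for $b$, I would restrict the supremum over $(\btau_h^\alpha,\bv_h^\beta)$ to pairs of the form $(\btau_h^\alpha,0)$, for which $\interleave(\btau_h^\alpha,0)\interleave=\|\btau_h^\alpha\|_{\divt,\alpha}$; the quantity then reduces to $\sup_{\btau_h^\alpha\in\Sigma_h^\alpha}(\divt\btau_h^\alpha,\bv_h^\alpha)_\alpha/\big(\|\btau_h^\alpha\|_{\divt,\alpha}\|\bv_h^\alpha\|_{0,\alpha}\big)$, which is bounded below by $C$ by \eqref{ass: A}. Brezzi's theory \cite{boffi2013mixed} then delivers well-posedness of \eqref{pro: weak-discrete-c}. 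The only point requiring care---and the reason \eqref{ass: A1} is imposed as a hypothesis rather than deduced as in the proof of Theorem~\ref{thm-wellposeconti}---is that at the discrete level $\divt\Sigma_h^\alpha$ need not be contained in $V_h^\alpha$, so one cannot conclude $\divt\btau_h^\alpha=0$ for $\btau_h^\alpha\in B_h^c$; the coercivity of $\mathcal{C}_0^{-1}$ on this kernel in the $\|\cdot\|_{\divt,\alpha}$-norm must therefore be assumed directly.
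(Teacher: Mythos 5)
Your proof is correct and follows essentially the same route the paper takes, merely spelling out the Brezzi-condition verification that the paper's proof of Theorem~\ref{thm-wellposediscrete} declares "easy to verify" and omits: boundedness inherits from the continuous case via the conforming inclusions, kernel coercivity follows from \eqref{ass: A1} plus Korn's inequality for the plate terms, and the inf-sup for $b$ follows by restricting to pairs $(\btau_h^\alpha,0)$ and invoking \eqref{ass: A}. Your closing observation---that $\divt\Sigma_h^\alpha\not\subset V_h^\alpha$ in general, so that $\divt\btau_h^\alpha=0$ on $B_h^c$ cannot be deduced and \eqref{ass: A1} must be assumed outright---correctly explains why the paper elevates kernel coercivity to a hypothesis at the discrete level rather than deriving it as in Theorem~\ref{thm-wellposeconti}.
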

\begin{proof}
    It is easy to verify that $a(\bullet,\bullet; \bullet, \bullet)$ is a nonnegative bilinear form. It remains to show that Brezzi's conditions \cite{boffi2013mixed} hold for \eqref{pro: weak-discrete-c}, which can be derived from the continuous counterpart in Theorem \ref{thm-wellposeconti} and assumptions \eqref{ass: A1}--\eqref{ass: A}. The details are omitted for brevity.
\end{proof}
  
Following the standard procedures in \cite{boffi2013mixed}, the well-posedness of problem \eqref{pro: weak-discrete-c} allows the following error estimate.
\begin{theorem} 
    \label{theoremConvergence}
    Let $(\bsig^\alpha,\bu^\alpha,\bu^\beta) \in \Sigma^\alpha \times V^\alpha \times W^\beta$ be the solution of problem \eqref{pro: weak} and $(\bsig_h^\alpha,\bu_h^\alpha,\bu_h^\beta) \in \Sigma_h^\alpha \times V_h^\alpha \times W_h^\beta$ be the solution of \eqref{pro: weak-discrete-c}. 
    Then 
    \begin{equation*}
        \begin{aligned}
            & \interleave (\bsig^\alpha-\bsig_h^\alpha, \bu^\beta-\bu_h^\beta) \interleave + \| \bu^\alpha -\bu_h^\alpha \|_{0,\alpha}   \\
            &\quad  \quad \quad \leq C \inf_{( \btau_h^\alpha,\bv_h^\alpha,\bv_h^\beta)\in \Sigma_h^\alpha \times V^\alpha_h \times W_h^\beta   } \left( \interleave (\bsig^\alpha-\btau_h^\alpha, \bu^\beta-\bv_h^\beta) \interleave + \| \bu^\alpha -\bv_h^\alpha \|_{0,\alpha}\right).
    \end{aligned}
    \end{equation*}
\end{theorem}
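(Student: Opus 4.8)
The plan is to follow the standard Céa / quasi-optimality argument for a conforming discretization of a uniformly well-posed generalized saddle point system, as in \cite{boffi2013mixed}. The two ingredients are already in place: the discrete spaces $\Sigma_h^\alpha\times V_h^\alpha\times W_h^\beta$ are conforming subspaces of $\Sigma^\alpha\times V^\alpha\times W^\beta$, and Problem \eqref{pro: weak-discrete-c} is well-posed with $h$-independent stability constants by Theorem \ref{thm-wellposediscrete}. First I would record Galerkin orthogonality: subtracting \eqref{pro: weak-discrete-c} from \eqref{formu-asym-mixed} tested against discrete functions gives, for all $(\btau_h^\alpha,\bv_h^\alpha,\bv_h^\beta)\in\Sigma_h^\alpha\times V_h^\alpha\times W_h^\beta$,
\begin{align*}
a(\bsig^\alpha-\bsig_h^\alpha,\bu^\beta-\bu_h^\beta;\btau_h^\alpha,\bv_h^\beta)+b(\bu^\alpha-\bu_h^\alpha;\btau_h^\alpha,\bv_h^\beta)&=0,\\
b(\bv_h^\alpha;\bsig^\alpha-\bsig_h^\alpha,\bu^\beta-\bu_h^\beta)&=0.
\end{align*}

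Next, fix an arbitrary $(\btau_h^\alpha,\bv_h^\alpha,\bv_h^\beta)$ in the discrete space and split each error component into a continuous part and a purely discrete part, e.g. $\bsig^\alpha-\bsig_h^\alpha=(\bsig^\alpha-\btau_h^\alpha)+(\btau_h^\alpha-\bsig_h^\alpha)$, and similarly for $\bu^\beta-\bu_h^\beta$ and $\bu^\alpha-\bu_h^\alpha$. The continuous parts are, by definition, controlled by the best-approximation quantity on the right-hand side of the estimate, so it remains to bound the discrete triple $(\btau_h^\alpha-\bsig_h^\alpha,\bv_h^\alpha-\bu_h^\alpha,\bv_h^\beta-\bu_h^\beta)$. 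Since Theorem \ref{thm-wellposediscrete} provides discrete versions of the Brezzi conditions — discrete coercivity on the kernel $B_h^c$ from \eqref{ass: A1} combined with Korn's inequality and the boundary conditions \eqref{bd}, and the discrete inf-sup \eqref{ass: A} (taking $\bv^\beta=0$ as in the continuous proof) — the operator of \eqref{pro: weak-discrete-c} is an isomorphism on the discrete space with norm and inverse norm independent of $h$. Applying this stability bound to the discrete triple and using Galerkin orthogonality to rewrite its residual in terms of the continuous parts $(\bsig^\alpha-\btau_h^\alpha,\bu^\beta-\bv_h^\beta,\bu^\alpha-\bv_h^\alpha)$, the boundedness of $a(\bullet,\bullet;\bullet,\bullet)$ and $b(\bullet;\bullet,\bullet)$ established in the proof of Theorem \ref{thm-wellposeconti} bounds this residual by $C\big(\interleave(\bsig^\alpha-\btau_h^\alpha,\bu^\beta-\bv_h^\beta)\interleave+\|\bu^\alpha-\bv_h^\alpha\|_{0,\alpha}\big)$. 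A triangle inequality and then taking the infimum over $(\btau_h^\alpha,\bv_h^\alpha,\bv_h^\beta)$ give the asserted estimate.

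I do not expect a genuinely hard step here: this is the textbook quasi-optimality estimate for a conforming mixed discretization. The points needing mild care are that $a(\bullet,\bullet;\bullet,\bullet)$ is nonsymmetric, so one must invoke the form of Brezzi's theory valid for generalized (nonsymmetric) saddle point systems — precisely the one already used to prove Theorem \ref{thm-wellposediscrete} — and that the infimum in the statement runs over the three discrete spaces independently, with the $\bu^\alpha$-component measured only in $\|\bullet\|_{0,\alpha}$, so that no additional regularity of $\bu^\alpha$ beyond $\bu^\alpha\in V^\alpha$ enters the bound.
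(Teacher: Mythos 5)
Your proposal is correct and takes exactly the route the paper intends: the paper gives no written proof, stating only that the estimate follows by "the standard procedures in \cite{boffi2013mixed}" once the discrete well-posedness of Theorem \ref{thm-wellposediscrete} is in hand, which is precisely the Céa/quasi-optimality argument you spell out (Galerkin orthogonality from conformity, discrete Brezzi stability, boundedness of $a$ and $b$, triangle inequality, infimum). The only detail worth noting explicitly is that Galerkin orthogonality holds because for discrete test functions the duality pairing in $a(\bullet,\bullet;\bullet,\bullet)$ reduces to the $L^2$ product $(\btau_h^\alpha\bn^\alpha,\bullet)_\Gamma$, so the discrete and continuous bilinear forms genuinely coincide on the discrete spaces, as the paper remarks just after \eqref{pro: weak-discrete-c}.
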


As an example of a conforming finite element method, choose the vectorial $P_4$ Lagrange finite element for the plane elasticity model and the Argyris finite element for the plate bending model. To achieve the optimal convergence rate, choose a conforming $P_4$-$P_3$ mixed $H(\divt;\mathbb{S})\times L^2$ finite element in \cite{Hu2015,HuZhang2015}, which satisfies assumptions \eqref{ass: A1}--\eqref{ass: A}. Assume that the exact solution is smooth enough, it follows from Theorem \ref{theoremConvergence} and the usual interpolation estimates that 
\begin{equation*}
\interleave (\bsig^\alpha-\bsig_h^\alpha, \bu^\beta-\bu_h^\beta) \interleave + \| \bu^\alpha -\bu_h^\alpha \|_{0,\alpha} \leq C (h_\alpha^4+h_\beta^4).
\end{equation*}
 
\subsection{Nonconforming finite element method}
Recall that $V^{\alpha} = L^2(\alpha;\mathbb{R}^3)$ and $W_{1}^\beta=H_0^1(\beta;\mathbb{R}^2)$. The construction of conforming discrete
 spaces for $V^{\alpha}$ and $W_1^\beta$ is straightforward, whereas constructing conforming finite elements for $\Sigma^\alpha$ in \eqref{defSigmaalpha} and $W_2^\beta=H_0^2(\beta)$ is considerably more involved. Thus, this subsection focuses on the nonconforming finite element method based on the weak formulation \eqref{pro: weak}. 
 Choose discrete spaces as follows:
\begin{equation}
\Sigma^\alpha_h \not \subset \Sigma^\alpha, \, V_h^\alpha \subset V^\alpha, \, W_{1h}^\beta \subset W_1^\beta, \, W_{2h}^{\beta} \not \subset W_2^\beta.
\end{equation}
In accordance with \eqref{def:W-beta}, let $W_h^\beta \coloneqq W_{1h}^\beta \times W_{2h}^\beta$.

Let $\divt_h$ be the piecewise divergence operator defined on $\mathcal{T}_h^\alpha$. Define $\bK_h^\beta(\bv^\beta)$ and $\bM_h^\beta(\bv^\beta)$ as the piecewise counterparts of $\bK^\beta(\bv^\beta)$ and $\bM^\beta(\bv^\beta)$ for piecewise functions $\bv^\beta$ on $\mathcal{T}_h^\beta$, respectively, as introduced in \eqref{def:KandM}. To define the mesh-dependent norm below, introduce a subspace $\tilde{\Sigma}^\alpha \subset \Sigma^\alpha$ as follows:
\begin{equation}
\label{def:tSigma}
\tilde{\Sigma}^\alpha \coloneqq  \Sigma^\alpha \cap H^s(\alpha;\mathbb{S}) \text{ with } s > \frac{1}{2}. 
\end{equation}
Recall the definition of $\mathcal{F}_\Gamma^\alpha$ in \eqref{def:F-gamma}. For all $\btau_h^\alpha \in  \tilde{\Sigma}^\alpha + \Sigma_h^\alpha$, the mesh-dependent norm reads as
\begin{equation}
\label{def:divh}
\| \btau_h^\alpha\|_{\divt,h}^2 \coloneqq \| \btau_h^\alpha\|_{0,\alpha}^2 + \| \divt_h \btau_h^\alpha \|_{0,\alpha}^2+ \sum_{F\in \mathcal{F}_\Gamma^\alpha} h_F \| \btau_h^\alpha \bn^\alpha\|_{0,F}^2.     
\end{equation}
For all $v_{3h}^\beta \in W_{2h}^\beta$ and integer $m \geq 0$, define the discrete norm and semi-norm
$$
\|v_{3h}^\beta\|_{m,h}^2 \coloneqq  \sum_{K^\beta \in \mathcal{T}_h^\beta} \|v_{3h}^\beta\|_{m,K^\beta}^2, \,
|v_{3h}^\beta|_{m,h}^2 \coloneqq  \sum_{K^\beta \in \mathcal{T}_h^\beta} |v_{3h}^\beta|_{m,K^\beta}^2.
$$ 
Let $\Pi_3^\beta$ be an interpolation operator from $W_{2h}^\beta$ onto $H^1(\beta;\mathbb{R}^3)$ and define $\Pi^\beta \bv_h^\beta = ( v_{1h}^\beta, v_{2h}^\beta, \Pi_{3}^\beta v_{3h}^\beta)$ for all $\bv_h^\beta={( v_{1h}^\beta ,v_{2h}^\beta, v_{3h}^\beta)} \in W_h^\beta$. 
Define
$$
\begin{aligned}
a_h(\bsig_h^\alpha,\bu_h^\beta; \btau_h^\alpha, \bv_h^\beta)  &\coloneqq (\mathcal{C}_0^{-1} \bsig_h^\alpha, \btau_h^{\alpha})_\alpha + (\bN^{\beta}(\bu_h^\beta), \be^{\beta}(\bv_h^\beta))_\beta+ 
(\bM_h^\beta(\bu_{h}^\beta), \bK_h^\beta(\bv_{h}^\beta))_\beta  \\
& \quad + (\bsig_h^\alpha \bn^{\alpha},\Pi^\beta \bv_h^{\beta} )_{\Gamma} - (\btau_h^{\alpha}  \bn^{\alpha}, \Pi^\beta \bu_h^\beta )_{\Gamma}, \\
b_h(\bu_h^\alpha; \btau_h^\alpha,\bv_h^\beta) &\coloneqq (\divt_h \btau_h^{\alpha}, \bu_h^{\alpha})_\alpha,
\end{aligned}
$$
for all $(\bsig_h^\alpha,\bu_h^\beta),(\btau_h^\alpha,\bv_h^\beta) \in (\tilde{\Sigma}^\alpha+\Sigma_h^\alpha) \times (W^\beta + W_h^\beta)$ and $\bu_h^\alpha \in V^\alpha$.
The nonconforming finite element method reads:
Given $\bft^\alpha \in L^2(\alpha;\mathbb{R}^3)$ and $\bft^\beta \in L^2(\beta;\mathbb{R}^3)$, find $(\bsig_h^{\alpha}, \bu_h^{\alpha}, \bu_h^{\beta}) \in \Sigma_h^\alpha \times V_h^{\alpha}\times W_h^\beta$ such that 
\begin{equation}
\label{pro: weak-discrete-nc}
\begin{aligned}
a_h(\bsig_h^\alpha,\bu_h^\beta; \btau_h^\alpha, \bv_h^\beta) + b_h(\bu_h^\alpha; \btau_h^\alpha,\bv_h^\beta) &= (\bft^\beta, \bv_h^\beta)_\beta, \\
b_h(\bv_h^\alpha; \bsig_h^\alpha,\bu_h^\beta) &= -(\bft^\alpha, \bv_h^\alpha)_\alpha, 
\end{aligned}    
\end{equation}
for all $(\btau_h^\alpha, \bv_h^\alpha,\bv_h^\beta) \in \Sigma_h^\alpha \times V_h^{\alpha}\times W_h^\beta$. 

\begin{remark}
The operator $\Pi^\beta$ is pivotal for controlling the degree of nonconformity. Such control is essential for the boundedness of $a_h(\bullet,\bullet;\bullet,\bullet)$, notably facilitating the analysis without requiring specific mesh conditions along the interface. Similar interpolation operators are introduced in analyzing nonconforming finite element methods, see e.g., \cite{veeser2019quasi, ming2006morley, shi1990error}.
\end{remark}

The following assumptions are proposed for the discrete stability and error analysis of \eqref{pro: weak-discrete-nc}. 
\begin{enumerate}
    \item[(A1)] The following $\interleave \bullet \interleave_{h}$ is a norm on $(\tilde{\Sigma}^\alpha+\Sigma^\alpha_h) \times (W^\beta+W_h^\beta)$
\begin{equation*}
\interleave (\btau_h^\alpha,\bv_h^\beta)  \interleave_{h}^2 \coloneqq \| \btau_h^\alpha\|_{\divt,h}^2  + \sum_{I=1}^2|v_{Ih}^\beta|_{1,\beta}^2 + |v_{3h}^\beta|_{2,h}^2.
\end{equation*}.
\item[(A2)] The bilinear forms $a_h(\bullet, \bullet;\bullet,\bullet)$ and $b_h(\bullet;\bullet, \bullet)$ are bounded with respect to the norms $\interleave \bullet \interleave_{h}$ and $\| \bullet \|_{0,\alpha}$, namely,
$$
\begin{aligned}
|a_h(\bsig_h^\alpha,\bu_h^\beta; \btau_h^\alpha, \bv_h^\beta)| &\leq C \interleave(\bsig_h^\alpha,\bu_h^\beta)\interleave_{h} \interleave (\btau_h^\alpha, \bv_h^\beta) \interleave_{h}, \\
|b_h(\bv_h^\alpha; \bsig_h^\alpha,\bu_h^\beta)| &\leq C  \interleave(\bsig_h^\alpha,\bu_h^\beta)\interleave_{h}\| \bv_h^\alpha\|_{0,\alpha},
\end{aligned}
$$
for all $(\bsig_h^\alpha,\bu_h^\beta) \in (\tilde{\Sigma}^\alpha+\Sigma_h^\alpha)\times (W^\beta+W_h^\beta)$, $(\btau_h^\alpha, \bv_h^\beta) \in \Sigma_h^\alpha \times W_h^\beta$ and $\bv_h^\alpha \in V_h^\alpha$.
\item[(A3)] The mixed finite element pair $\Sigma_h^\alpha \times V_h^\alpha$ on $\alpha$ is stable. Namely, the coercivity holds
$$
(\mathcal{C}_0^{-1} \btau_h^\alpha, \btau_h^\alpha)_{\alpha} \geq C \| \btau^\alpha \|_{\divt,h} \text{ for all } \btau^\alpha \in B_h^{nc}     
$$
with 
$$
B_h^{nc} \coloneqq \{ \btau_h^\alpha \in \Sigma_h^\alpha; (\divt_h \btau_h^\alpha, \bv_h^\alpha)_\alpha=0 \text{ for all }  \bv_h^\alpha \in V_h^\alpha \},
$$
and the inf-sup condition holds
$$
\inf_{\bv_h^\alpha \in V_h^\alpha} \sup_{\btau_h^\alpha \in \Sigma_h^\alpha} \frac{(\divt \btau_h^\alpha, \bv_h^\alpha)_\alpha}{\| \bv_h^\alpha\|_{0,\alpha} \| \btau_h^\alpha\|_{\divt,h}} \geq C.    
$$
\item[(A4)]The approximation property holds 
$$
\inf_{(\btau_h^\alpha, \bv_h^\beta)\in \Sigma_h^\alpha \times W_h^\beta} \interleave ( \bsig^\alpha - \btau_h^\alpha, \bu^\beta- \bv_h^\beta) \interleave_{h} + \inf_{\bv_h^\alpha \in V_h^\alpha} \| \bu^\alpha - \bv_h^\alpha \|_{0,\alpha} \leq C (h_\alpha^k + h_\beta^k)
$$
with a positive integer $k$.
\item[(A5)]The consistent error estimate holds
$$
\sup_{(\btau_h^\alpha, \bv_h^\beta) \in \Sigma_h^\alpha \times W_h^\beta} \frac{ |T_0|}{\interleave (\btau_h^\alpha, \bv_h^\beta) \interleave_{h}} \leq C (h_\alpha^k + h_\beta^k)
$$
with a positive integer $k$ and 
\begin{equation}
\label{def:T0}
T_0 \coloneqq a_h(\bsig^\alpha, \bu^\beta; \btau_h^\alpha, \bv_h^\beta) + b_h(\bu^\alpha; \btau_h^\alpha, \bv_h^\beta) - (\bft^\beta, \bv_h^\beta)_\beta.
\end{equation}
\end{enumerate}

\begin{remark}
\label{remark:ah}
To analyze the consistent error, the subspace $\tilde{\Sigma}^\alpha \subset \Sigma^\alpha$ is introduced, and the exact solution $\bsig^\alpha$ is assumed to belong to $\tilde{\Sigma}^\alpha$. This assumption ensures that the extended definition of $a_h(\bullet,\bullet;\bullet,\bullet)$ is applicable to the exact solution, thereby satisfying the nonconforming finite element method \eqref{pro: weak-discrete-nc}.
\end{remark}

\begin{remark}
\label{remark:A2}
The well-posedness of the nonconforming finite element method \eqref{pro: weak-discrete-nc} can be derived by Brezzi's conditions, which requires the boundedness of the bilinear forms $a_h(\bsig_h^\alpha,\bu_h^\beta;\btau_h^\alpha, \bv_h^\beta)$ and $b_h(\bv_h^\alpha;\bsig_h^\alpha,\bu_h^\beta)$ for all $(\bsig_h^\alpha,\bu_h^\beta), (\btau_h^\alpha, \bv_h^\beta) \in \Sigma_h^\alpha \times W_h^\beta$ and $\bv_h^\alpha \in V_h^\alpha$. Assumption $\mathrm{(A2)}$ requires that the boundedness holds for all $(\bsig_h^\alpha,\bu_h^\beta) \in (\tilde{\Sigma}^\alpha+\Sigma_h^\alpha)\times (W^\beta+W_h^\beta)$ to analyze the error estimate.
\end{remark}

\begin{theorem}
\label{thm:conti-nonconforming-error}
Under Assumptions $\mathrm{(A1)}$--$\mathrm{(A3)}$, problem \eqref{pro: weak-discrete-nc} is well-posed, i.e., there exists a unique solution $(\bsig_h^\alpha, \bu_h^\alpha, \bu_h^\beta) \in \Sigma_h^\alpha \times V_h^\alpha \times W_h^\beta$ such that
$$
\interleave (\bsig_h^\alpha, \bu_h^\beta) \interleave_{h} + \| \bu_h^\alpha \|_{0,\alpha} \leq C ( \| \bft^\alpha\|_{0,\alpha} + \| \bft^\beta\|_{0,\beta}).
$$
Moreover, if Assumptions $\mathrm{(A4)}$--$\mathrm{(A5)}$ hold, then the optimal convergence holds
$$
\interleave ( \bsig^\alpha - \bsig_h^\alpha, \bu^\beta- \bu_h^\beta) \interleave_{h} + \| \bu^\alpha - \bu_h^\alpha \|_{0,\alpha} \leq C (h_\alpha^k+h_\beta^k).
$$
\end{theorem}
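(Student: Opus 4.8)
The plan is to prove Theorem~\ref{thm:conti-nonconforming-error} in two stages. \emph{Stage one:} establish the discrete well-posedness and the stability bound directly from Brezzi's theory \cite{boffi2013mixed} applied to the saddle-point form \eqref{pro: weak-discrete-nc}, now measured in the mesh-dependent norms; the nonsymmetry of $a_h(\bullet,\bullet;\bullet,\bullet)$ is no obstruction since what is needed is only boundedness, coercivity of $a_h$ on the kernel of $b_h(\bullet;\bullet,\bullet)$, and the inf-sup condition for $b_h$. \emph{Stage two:} derive the convergence estimate by a second-Strang-type argument: split the error into an approximation part (controlled by $\mathrm{(A4)}$) and a discrete part which, by the stability of Stage one, is controlled by a residual consisting of a boundedness term against the approximation error plus the consistency functional $T_0$ (controlled by $\mathrm{(A5)}$).

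For Stage one, the boundedness of $a_h$ and $b_h$ with respect to $\interleave\bullet\interleave_h$ and $\|\bullet\|_{0,\alpha}$ is exactly $\mathrm{(A2)}$. For the kernel coercivity, note that $b_h(\bv_h^\alpha;\btau_h^\alpha,\bv_h^\beta)=(\divt_h\btau_h^\alpha,\bu_h^\alpha)_\alpha$ forces any kernel element $(\btau_h^\alpha,\bv_h^\beta)$ to have $\btau_h^\alpha\in B_h^{nc}$; testing $a_h$ diagonally, the two antisymmetric interface contributions $(\bsig_h^\alpha\bn^\alpha,\Pi^\beta\bv_h^\beta)_\Gamma-(\btau_h^\alpha\bn^\alpha,\Pi^\beta\bu_h^\beta)_\Gamma$ cancel, leaving $(\mathcal{C}_0^{-1}\btau_h^\alpha,\btau_h^\alpha)_\alpha+(\bN^\beta(\bv_h^\beta),\be^\beta(\bv_h^\beta))_\beta+(\bM_h^\beta(\bv_h^\beta),\bK_h^\beta(\bv_h^\beta))_\beta$. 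The coercivity part of $\mathrm{(A3)}$ bounds the first summand below by $C\|\btau_h^\alpha\|_{\divt,h}^2$; since $W_{1h}^\beta\subset H_0^1(\beta;\mathbb{R}^2)$ is conforming, the usual Korn inequality bounds the second by $C\sum_{I=1}^{2}|v_{Ih}^\beta|_{1,\beta}^2$; and since $\mathcal{C}_2$ is symmetric positive definite and $\bK_h^\beta(\bv_h^\beta)$ is the elementwise Hessian of $v_{3h}^\beta$, the third equals $(\mathcal{C}_2\bK_h^\beta(\bv_h^\beta),\bK_h^\beta(\bv_h^\beta))_\beta\geq C|v_{3h}^\beta|_{2,h}^2$. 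With $\mathrm{(A1)}$ this gives $a_h(\btau_h^\alpha,\bv_h^\beta;\btau_h^\alpha,\bv_h^\beta)\geq C\interleave(\btau_h^\alpha,\bv_h^\beta)\interleave_h^2$ on the kernel. Finally, since $b_h$ does not see $\bv_h^\beta$, choosing $\bv_h^\beta=0$ and using $\interleave(\btau_h^\alpha,0)\interleave_h=\|\btau_h^\alpha\|_{\divt,h}$ reduces the inf-sup requirement for $b_h$ to the inf-sup part of $\mathrm{(A3)}$. Brezzi's theory then yields existence, uniqueness, and $\interleave(\bsig_h^\alpha,\bu_h^\beta)\interleave_h+\|\bu_h^\alpha\|_{0,\alpha}\leq C(\|\bft^\alpha\|_{0,\alpha}+\|\bft^\beta\|_{0,\beta})$.

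For Stage two, fix arbitrary $(\btau_h^\alpha,\bv_h^\alpha,\bv_h^\beta)\in\Sigma_h^\alpha\times V_h^\alpha\times W_h^\beta$ and split $\bsig^\alpha-\bsig_h^\alpha=(\bsig^\alpha-\btau_h^\alpha)+(\btau_h^\alpha-\bsig_h^\alpha)$, and similarly for $\bu^\alpha$ and $\bu^\beta$. The approximation parts are handled by $\mathrm{(A4)}$, so it remains to bound $\delta_h\coloneqq(\btau_h^\alpha-\bsig_h^\alpha,\bv_h^\alpha-\bu_h^\alpha,\bv_h^\beta-\bu_h^\beta)$. Subtracting \eqref{pro: weak-discrete-nc}, $\delta_h$ solves the discrete saddle-point system with right-hand sides (against test functions in $\Sigma_h^\alpha\times W_h^\beta$, resp.\ $V_h^\alpha$): for the first equation, $a_h(\btau_h^\alpha-\bsig^\alpha,\bv_h^\beta-\bu^\beta;\bullet)+b_h(\bv_h^\alpha-\bu^\alpha;\bullet)+T_0$, using the linearity of $a_h$ and the definition \eqref{def:T0}; for the second equation, $(\divt_h(\btau_h^\alpha-\bsig^\alpha),\bullet)_\alpha$, where I have used $-\divt\bsig^\alpha=\bft^\alpha$ and $\divt_h\bsig^\alpha=\divt\bsig^\alpha$ (as $\bsig^\alpha\in H(\divt,\alpha;\mathbb{S})$) so that $\divt_h\bsig_h^\alpha$ and $\divt_h\bsig^\alpha$ agree when tested against $V_h^\alpha$. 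The boundedness of $a_h$, $b_h$ on the enlarged space $(\tilde\Sigma^\alpha+\Sigma_h^\alpha)\times(W^\beta+W_h^\beta)$ (the full strength of $\mathrm{(A2)}$, cf.\ Remark~\ref{remark:A2}) bounds the first three terms by the approximation error $\interleave(\btau_h^\alpha-\bsig^\alpha,\bv_h^\beta-\bu^\beta)\interleave_h+\|\bv_h^\alpha-\bu^\alpha\|_{0,\alpha}$, the $T_0$-term is bounded by $\mathrm{(A5)}$, and the second-equation residual by $\|\divt_h(\btau_h^\alpha-\bsig^\alpha)\|_{0,\alpha}\leq\interleave(\btau_h^\alpha-\bsig^\alpha,\bv_h^\beta-\bu^\beta)\interleave_h$. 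Feeding these residual bounds into the discrete stability of Stage one gives $\interleave(\btau_h^\alpha-\bsig_h^\alpha,\bv_h^\beta-\bu_h^\beta)\interleave_h+\|\bv_h^\alpha-\bu_h^\alpha\|_{0,\alpha}\leq C(\text{approx.\ error}+(h_\alpha^k+h_\beta^k))$; a triangle inequality, $\mathrm{(A4)}$--$\mathrm{(A5)}$, and passing to the infimum over $(\btau_h^\alpha,\bv_h^\alpha,\bv_h^\beta)$ yield the claimed rate.

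The routine algebra aside, the delicate points are: (i) confirming that Brezzi's framework survives the nonsymmetry of $a_h$, which works precisely because $a_h$ is nonnegative and coercive on $\ker b_h$, so the $\bu^\alpha$-component is controlled entirely through the inf-sup of $b_h$ and never through $a_h$; (ii) the legitimacy of evaluating the extended form $a_h(\bullet,\bullet;\bullet,\bullet)$ at the exact solution, which rests on the regularity assumption $\bsig^\alpha\in\tilde\Sigma^\alpha$ (Remark~\ref{remark:ah}), ensuring $\bsig^\alpha\bn^\alpha\in L^2(\Gamma;\mathbb{R}^3)$ so that the face terms in $\|\bullet\|_{\divt,h}$ are finite and $T_0$ is well defined; and (iii) the careful residual bookkeeping, in particular that $\divt_h\bsig^\alpha=\divt\bsig^\alpha=-\bft^\alpha$ is what collapses the second-equation residual to the divergence of the interpolation error. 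With these in place the statement is a standard consequence of the abstract mixed-method theory.
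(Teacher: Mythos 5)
Your proof is correct and follows essentially the same route as the paper: Stage one mirrors the paper's verification of Brezzi's conditions (boundedness from $\mathrm{(A2)}$, kernel coercivity from $\mathrm{(A1)}$, $\mathrm{(A3)}$, and Korn's inequality, inf-sup via $\bv_h^\beta=0$ and $\mathrm{(A3)}$), while Stage two is the paper's triangle-inequality splitting into approximation error $E_1$ and discrete error $E_2$, with the residuals $T_1,T_2$ you identify corresponding exactly to the paper's and the bound $|T_1|\leq|T_1-T_0|+|T_0|$ handled by $\mathrm{(A2)}$, $\mathrm{(A4)}$, and $\mathrm{(A5)}$.
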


\begin{proof}
Assumption $\mathrm{(A2)}$ gives the boundedness of $a_h(\bullet,\bullet;\bullet,\bullet)$ and $b_h(\bullet;\bullet,\bullet)$, namely,
\begin{equation*}
\label{thm:4.53}
\begin{aligned}
|a_h(\bsig_h^\alpha,\bu_h^\beta; \btau_h^\alpha, \bv_h^\beta)| &\leq C \interleave(\bsig_h^\alpha,\bu_h^\beta)\interleave_{h} \interleave (\btau_h^\alpha, \bv_h^\beta) \interleave_{h}, \\
|b_h(\bv_h^\alpha; \bsig_h^\alpha,\bu_h^\beta)| &\leq C  \interleave(\bsig_h^\alpha,\bu_h^\beta)\interleave_{h}\| \bv_h^\alpha\|_{0,\alpha},
\end{aligned}
\end{equation*}
for all $(\bsig_h^\alpha,\bu_h^\beta), (\btau_h^\alpha, \bv_h^\beta) \in \Sigma_h^\alpha \times W_h^\beta$ and $\bv_h^\alpha \in V_h^\alpha$. 
By Assumption $\mathrm{(A3)}$, the coercivity holds as follows:
\begin{equation}
\label{thm:4.51}
(\mathcal{C}_0^{-1} \btau_h^\alpha, \btau_h^\alpha)_\alpha \geq C \| \btau_h^\alpha \|_{\divt,h} \text{ for all } \btau_h^\alpha \in B_h^{nc}.    
\end{equation}
Since $\interleave \bullet \interleave_{h}$ is a norm by Assumption $\mathrm{(A1)}$,  Korn's inequality and \eqref{thm:4.51} show that the bilinear form $a_h(\bullet,\bullet;\bullet,\bullet)$ is coercive, namely, 
\begin{equation*}
\label{thm:4.52}
\begin{aligned}
a_h(\btau_h^\alpha, \bv_h^\beta; \btau_h^\alpha, \bv_h^\beta) &=   (\mathcal{C}_0^{-1} \btau_h^\alpha, \btau_h^\alpha)_\alpha + (\bsig^\beta(\bv_h^\beta), \bvar^\beta(\bv_h^\beta))_\beta + (\bM_h^\beta(\bv_{h}^\beta), \bK_h^\beta(\bv_{h}^\beta))_\beta \\
& \geq  C \interleave (\btau_h^\alpha, \bv_h^\beta) \interleave_{h}^2.
\end{aligned}    
\end{equation*}
for all $(\btau_h^\alpha, \bv_h^\beta) \in \Sigma_h^\alpha \times V_h^\beta$ satisfying $\btau_h^\alpha \in B_h^{nc}$.
The inf-sup condition of $b_h(\bullet;\bullet,\bullet)$ follows from the inf-sup condition in Assumption $\mathrm{(A3)}$ by taking $\bv_h^\beta=0$ as follows:
\begin{equation*}
\begin{aligned}
\inf_{\bv_h^\alpha \in V_h^\alpha} \sup_{(\btau_h^\alpha,\bv_h^\beta) \in \Sigma_h^\alpha\times W_h^\beta} \frac{b_h(\bv_h^\alpha; \btau_h^\alpha,\bv_h^\beta)}{\| \bv_h^\alpha\|_{0,\alpha} \| \btau_h^\alpha\|_{\divt,h}} = 
\inf_{\bv_h^\alpha \in V_h^\alpha} \sup_{\btau_h^\alpha \in \Sigma_h^\alpha} \frac{(\divt \btau_h^\alpha, \bv_h^\alpha)_\alpha}{\| \bv_h^\alpha\|_{0,\alpha} \| \btau_h^\alpha\|_{\divt,h}} \geq C. 
\end{aligned}
\end{equation*}
Brezzi's conditions \cite{boffi2013mixed} show that problem \eqref{pro: weak-discrete-nc} is well-posed. 

Let $(\bar{\bsig}^\alpha, \bar{\bu}^\alpha, \bar{\bu}^\beta) \in \Sigma_h^\alpha \times V_h^\alpha \times W_{h}^\beta$ denote an arbitrary discrete function. The triangle inequality leads to
\begin{equation*}
\begin{aligned}
& \interleave ( \bsig^\alpha - \bsig_h^\alpha, \bu^\beta- \bu_h^\beta) \interleave_{h} + \| \bu^\alpha - \bu_h^\alpha \|_{0,\alpha} \\
&\quad \leq  \left(  \interleave ( \bsig^\alpha - \bar{\bsig}^\alpha, \bu^\beta- \bar{\bu}^\beta) \interleave_{h} + \| \bu^\alpha - \bar{\bu}^\alpha \|_{0,\alpha} \right)\\
&\quad + \left( \interleave ( \bar{\bsig}^\alpha - \bsig_h^\alpha, \bar{\bu}^\beta- \bu_h^\beta) \interleave_{h} + \| \bar{\bu}^\alpha - \bu_h^\alpha \|_{0,\alpha} \right) \coloneqq E_1 + E_2.
\end{aligned}      
\end{equation*}  
By Assumption $\mathrm{(A4)}$, the approximation error $E_1$ can be estimated by
\begin{equation}
\label{thm:4.502}
|E_1| \leq C(h_\alpha^k + h_\beta^k).
\end{equation}
for some positive integer $k$. To estimate $E_2$, define 
$$
\begin{aligned}
T_1 & \coloneqq a_h( \bar{\bsig}^\alpha - \bsig_h^\alpha, \bar{\bu}^\beta-\bu_h^\beta; \btau_h^\alpha, \bv_h^\beta)+  b_h(\bar{\bu}^\alpha-\bu_h^\alpha; \btau_h^\alpha, \bv_h^\beta), \\
T_2 & \coloneqq b_h(\bv_h^\alpha; \bar{\bsig}^\alpha - \bsig_h^\alpha, \bar{\bu}^\beta-\bu_h^\beta).
\end{aligned}    
$$
This, Brezzi's theory \cite{boffi2013mixed} and Assumption $\mathrm{(A3)}$ yield the stability result
\begin{equation}
\label{eq:4.503}
\begin{aligned}
 E_2 &\leq \sup_{(\btau_h^\alpha, \bv_h^\beta)\in \Sigma_h^\alpha \times W_h^\beta, \bv_h^\alpha \in V_h^\alpha}  \frac{|T_1 + T_2| }{ \interleave (\btau_h^\alpha, \bv_h^\beta) \interleave_{h} + \| \bv_h^\alpha\|_{0,\alpha}}\\
& \leq \sup_{(\btau_h^\alpha, \bv_h^\beta)\in \Sigma_h^\alpha \times W_h^\beta}  \frac{|T_1| }{ \interleave (\btau_h^\alpha, \bv_h^\beta) \interleave_{h} } + \sup_{\bv_h^\alpha \in V_h^\alpha}  \frac{|T_2| }{\| \bv_h^\alpha\|_{0,\alpha}}.
\end{aligned}    
\end{equation}
Since $\bv_h^\alpha \in V_h^\alpha \subset V^\alpha$, the combination of \eqref{pro: weak} and \eqref{pro: weak-discrete-nc} leads to
$$
\begin{aligned}
T_2 &= ( \divt_h (\bar{\bsig}^\alpha - \bsig_h^\alpha), \bv_h^\alpha)_\alpha = ( \divt_h (\bar{\bsig}^\alpha - \bsig^\alpha), \bv_h^\alpha)_\alpha.  
\end{aligned}
$$
Assumptions  $\mathrm{(A2)}$ and $\mathrm{(A4)}$ show
\begin{equation}
\label{eq:4.505}
\sup_{\bv_h^\alpha \in V_h^\alpha}  \frac{|T_2| }{\| \bv_h^\alpha\|_{0,\alpha}} \leq \interleave (\bsig^\alpha - \bar{\bsig}^\alpha, \bu^\beta - \bar{\bu}^\beta)\interleave_{h} \leq C(h_\alpha^k + h_\beta^k).  
\end{equation}
By \eqref{pro: weak-discrete-nc}, it holds that 
$$
\begin{aligned}
T_1  = a_h(\bar{\bsig}^\alpha, \bar{\bu}^\beta; \btau_h^\alpha, \bv_h^\beta) + b_h(\bar{\bu}^\alpha; \btau_h^\alpha, \bv_h^\beta) - (\bft^\beta, \bv_h^\beta)_\beta. \\
\end{aligned}
$$
Recall the definition of $T_0$ in \eqref{def:T0}. The triangle inequality leads to
$$
\sup_{(\btau_h^\alpha, \bv_h^\beta)\in \Sigma_h^\alpha \times W_h^\beta}  \frac{|T_1| }{ \interleave (\btau_h^\alpha, \bv_h^\beta) \interleave_{h} }  \leq \sup_{(\btau_h^\alpha, \bv_h^\beta)\in \Sigma_h^\alpha \times W_h^\beta} \left( \frac{|T_1- T_0| }{ \interleave (\btau_h^\alpha, \bv_h^\beta) \interleave_{h} } + \frac{|T_0| }{ \interleave (\btau_h^\alpha, \bv_h^\beta) \interleave_{h} }\right). 
$$
The combination of Assumptions $\mathrm{(A2)}$, $\mathrm{(A4)}$ and $\mathrm{(A5)}$ gives 
\begin{equation}
\label{eq:4.504}
\sup_{(\btau_h^\alpha, \bv_h^\beta)\in \Sigma_h^\alpha \times W_h^\beta}  \frac{|T_1| }{ \interleave (\btau_h^\alpha, \bv_h^\beta) \interleave_{h} } \leq C(h_\alpha^k + h_\beta^k).
\end{equation}
Substituting \eqref{eq:4.505} and \eqref{eq:4.504} into \eqref{eq:4.503} gives the estimate of $E_2$, which combines with \eqref{thm:4.502} completes the proof.
\end{proof}

\subsection{An example}
\label{sub:nonconformingelement}
This subsection presents an example of the nonconforming finite element method satisfying Assumptions $\mathrm{(A1)}$--$\mathrm{(A5)}$. 

Let $x_i \, (1 \leq i \leq 4)$ denote the vertices of tetrahedron $K^\alpha$ and $\boldsymbol{t}_{i,j} = x_j - x_i \,(i \neq j)$ the tangential vector of edge $x_i x_j$. Let $\lambda_i \, (1 \leq i \leq 4)$ denote the barycentric coordinates with respect to $x_i$.
Let $\Pi^\alpha_{SZ}$ be the Scott-Zhang interpolation operator \cite{scott1990finite} onto  continuous piecewise linear functions on $\mathcal{T}_h^\alpha$. 
Let $S_{K^\alpha}$ denote the set of elements in $\mathcal{T}_h^\alpha$ which have nonempty intersection with the closure of $K^\alpha$.
Given $K^\alpha \in \mathcal{T}_h^\alpha$, it holds that
\begin{equation}
\label{SZ}
\| \bv^\alpha - \Pi_{SZ}^\alpha \bv^\alpha \|_{0,K^\alpha} +h_{K^\alpha} \| \bv^\alpha - \Pi_{SZ}^\alpha \bv^\alpha \|_{1,K^\alpha} \leq C h_{K^\alpha}^{s} \|\bv^\alpha\|_{s,S_{K^\alpha}},
\end{equation}
for all $\bv^\alpha \in H^s(\alpha;\mathbb{R}^3)$ with $s=1,2$. Define
\begin{equation}
\label{def:interface-product}
(\bullet, \bullet)_{\partial \mathcal{T}_h^\alpha} \coloneqq \sum_{K^\alpha \in \mathcal{T}_h^\alpha}(\bullet, \bullet)_{\partial K^\alpha}.    
\end{equation} 

Choose the nonconforming $H(\divt,\alpha;\mathbb{S})\times L^2(\alpha;\mathbb{R}^3)$ mixed finite element in \cite{HuMa2018} with $k=1$ for the three dimensional body. The discrete displacement space is the discontinuous piecewise linear space
\[
V_{h}^{\alpha} \coloneqq \{ \bv_h^\alpha \in L^2(\alpha; \mathbb{R}^3); \bv_h^\alpha|_{K^\alpha} \in P_1(K^\alpha; \mathbb{R}^3) \text{ for all } K^\alpha \in \mathcal{T}_h^\alpha \}.
\]
For $1 \leq i < j \leq 4$ and $\{\ell, m\} = \{1, 2, 3, 4\} \backslash \{i, j\}$, define
\[
P_{ij} \coloneqq P_1(K^\alpha) + (\lambda_i - \lambda_j) \text{span}\{\lambda_\ell, \lambda_m\} + \lambda_i \lambda_j P_0(K^\alpha).
\]
The discrete stress space reads
\begin{equation}
\label{def-sigmah}
\begin{aligned}
\Sigma_h^{\alpha} \coloneqq &  \{ \btau_h^\alpha \in L^2(\alpha;\mathbb{S}); \btau_h^\alpha = \sum_{1 \leq i < j \leq 4} p_{ij} \boldsymbol{t}_{ij} \boldsymbol{t}_{ij}^T,  p_{ij} \in P_{ij} \text{ for all } K^\alpha \in \mathcal{T}_h^\alpha, \\
& \text{moments of } \btau_h^\alpha \bn{^\alpha} \text{ up to degree } 1
\text{ are continuous across}\\
& \text{internal faces and equal zero on boundary faces on } \partial \alpha \backslash \Gamma \}.    
\end{aligned}     
\end{equation}
Choose the vectorial continuous piecewise linear Lagrange finite element with zero boundary conditions for the plane elasticity model defined as follows:
$$
\begin{aligned}
W_{1h}^\beta \coloneqq \{& (v_{1h}^\beta,v_{2h}^\beta) \in H^1(\beta;\mathbb{R}^2); (v_{1h}^\beta,v_{2h}^\beta)|_{K^\beta} \in P_1(K^\beta;\mathbb{R}^2), (v_{1h}^\beta,v_{2h}^\beta)\text{ vanishes}\\
&\text{at the vertices at } \partial \beta \}.
\end{aligned}
$$
Choose the Morley finite element space for the plate bending model defined as follows:
$$
\begin{aligned}
W_{2h}^\beta \coloneqq \{
&v_{3h}^\beta \in L^2(\beta); v_{3h}^\beta|_{K^\beta} \in P_2(K^\beta) \text{ for all } K^\beta \in \mathcal{T}_h^\beta, v_{3h}^\beta \text{ is continuous at vertices} \\
&\text{and vanishes at the vertices at } \partial \beta, \partial_{n^{\beta}} v_{3h}^\beta \text{ is continuous at the midpoint} \\
& \text{of each internal edge and vanishes at the midpoint of each edge on } \partial \beta \}.
\end{aligned}
$$
Let $\Pi_3^\beta$ be the nodal interpolation defined on $W_2^\beta+W_{2h}^\beta$ to continuous piecewise linear functions on $\mathcal{T}_h^\beta$ and let $\Pi^\beta \bv_h^\beta = (v_{1h}^\beta,v_{2h}^\beta, \Pi_3^\beta v_{3h}^\beta)$.
The interpolation estimate and Poincar\'e inequality show
\begin{equation}
\label{ineq-norm}
\| \Pi^\beta {\bv}_h^\beta \|_{1,\Gamma} \leq \| \Pi^\beta {\bv}_h^\beta \|_{1,\beta} \leq C  \left(\sum_{I=1}^2|v_{Ih}^\beta|_{1,\beta}^2 + |v_{3h}^\beta|_{2,h}^2\right)^{\frac{1}{2}},
\end{equation}
for all $\bv_h^\beta \in W^\beta+W_{h}^\beta$.

To prove Assumption (A2), Lemma \ref{lem:interface-term} below will be used which is based on the estimate in Lemma \ref{lem:interface-term-alpha}. Recall $(\bullet,\bullet)_{\partial \mathcal{T}_h^\alpha}$ in \eqref{def:interface-product} and $\Sigma_h^\alpha$ in \eqref{def-sigmah}.

\begin{lemma}
\label{lem:interface-term-alpha} 
Given $\bsig_h^\alpha \in \tilde{\Sigma}^\alpha + \Sigma_h^\alpha$ and $\bv^\alpha \in H^1(\alpha;\mathbb{R}^3)$, it holds that
$$
(\bsig_h^\alpha \bn^\alpha, \bv^\alpha)_{\partial \mathcal{T}_h^\alpha}
\leq C \| \bsig_h^\alpha \|_{\divt,h} \| \bv^\alpha \|_{1,\alpha}.
$$  
\end{lemma}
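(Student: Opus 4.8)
The goal is to bound the piecewise boundary pairing $(\bsig_h^\alpha \bn^\alpha, \bv^\alpha)_{\partial \mathcal{T}_h^\alpha}$ by $C\|\bsig_h^\alpha\|_{\divt,h}\|\bv^\alpha\|_{1,\alpha}$ for $\bsig_h^\alpha \in \tilde{\Sigma}^\alpha+\Sigma_h^\alpha$ and $\bv^\alpha \in H^1(\alpha;\mathbb{R}^3)$. The essential difficulty is that on a single element $\|\btau \bn\|_{-1/2,\partial K}$ only controls $h_K^{-1/2}\|\btau\|_{0,K}+h_K^{1/2}\|\divt\btau\|_{0,K}$ by a scaled trace/inverse argument, so a naive sum over elements produces a term $\sum_K h_K^{-1}\|\bv^\alpha\|_{0,K}^2$ that is \emph{not} bounded by $\|\bv^\alpha\|_{1,\alpha}^2$. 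The standard remedy, following \cite{arnold1985mixed}, is to subtract the Scott--Zhang interpolant: on each face between adjacent elements the continuous part $\Pi_{SZ}^\alpha \bv^\alpha$ contributes nothing (the normal moments of $\bsig_h^\alpha\bn^\alpha$ are continuous across internal faces and vanish on $\partial\alpha\backslash\Gamma$, while $\Pi_{SZ}^\alpha\bv^\alpha$ is single-valued), leaving only the oscillation $\bv^\alpha - \Pi_{SZ}^\alpha\bv^\alpha$, which enjoys the superconvergent bound \eqref{SZ}.

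\textbf{Main steps.} First, write $(\bsig_h^\alpha\bn^\alpha,\bv^\alpha)_{\partial\mathcal{T}_h^\alpha} = (\bsig_h^\alpha\bn^\alpha, \bv^\alpha - \Pi_{SZ}^\alpha\bv^\alpha)_{\partial\mathcal{T}_h^\alpha} + (\bsig_h^\alpha\bn^\alpha, \Pi_{SZ}^\alpha\bv^\alpha)_{\partial\mathcal{T}_h^\alpha}$. For the second term, observe that the sum over all element boundaries can be reorganized into a sum over faces: each internal face $F$ is shared by two elements $K^{\alpha,+}, K^{\alpha,-}$ with opposite normals, and since $\Pi_{SZ}^\alpha\bv^\alpha$ is continuous across $F$ while the normal trace $\bsig_h^\alpha\bn^\alpha$ has continuous moments up to degree $1$ (it suffices that the jump of $\bsig_h^\alpha\bn^\alpha$ is $L^2$-orthogonal to $P_1$ on $F$, and $\Pi_{SZ}^\alpha\bv^\alpha|_F \in P_1(F;\mathbb{R}^3)$), the two face contributions cancel; on boundary faces $F \subset \partial\alpha\backslash\Gamma$ we have $\bsig_h^\alpha\bn^\alpha = 0$ in the same moment sense, so only faces in $\mathcal{F}_\Gamma^\alpha$ survive. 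This reduces the second term to $(\bsig_h^\alpha\bn^\alpha, \Pi_{SZ}^\alpha\bv^\alpha)_\Gamma = \sum_{F\in\mathcal{F}_\Gamma^\alpha}(\bsig_h^\alpha\bn^\alpha, \Pi_{SZ}^\alpha\bv^\alpha)_F$, which by Cauchy--Schwarz, the weighted norm in \eqref{def:divh}, the stability of $\Pi_{SZ}^\alpha$ in $L^2(\Gamma)$ (or a trace inequality combined with \eqref{SZ}), and scaling is bounded by $C\bigl(\sum_{F\in\mathcal{F}_\Gamma^\alpha}h_F\|\bsig_h^\alpha\bn^\alpha\|_{0,F}^2\bigr)^{1/2}\bigl(\sum_F h_F^{-1}\|\Pi_{SZ}^\alpha\bv^\alpha\|_{0,F}^2\bigr)^{1/2} \leq C\|\bsig_h^\alpha\|_{\divt,h}\|\bv^\alpha\|_{1,\alpha}$.

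\textbf{The oscillation term.} For the first term, apply Cauchy--Schwarz element by element and the scaled trace inequality $\|\bsig_h^\alpha\bn^\alpha\|_{0,\partial K^\alpha} \leq C(h_{K^\alpha}^{-1/2}\|\bsig_h^\alpha\|_{0,K^\alpha} + h_{K^\alpha}^{1/2}\|\divt\bsig_h^\alpha\|_{0,K^\alpha})$ — valid because $\bsig_h^\alpha \in \tilde{\Sigma}^\alpha + \Sigma_h^\alpha$ is piecewise in $H^s$ with $s>1/2$, or for the discrete part lies in a polynomial space where an inverse inequality applies — together with the Scott--Zhang estimate \eqref{SZ} which gives $\|\bv^\alpha - \Pi_{SZ}^\alpha\bv^\alpha\|_{0,\partial K^\alpha} \leq C(h_{K^\alpha}^{-1/2}\|\bv^\alpha - \Pi_{SZ}^\alpha\bv^\alpha\|_{0,K^\alpha} + h_{K^\alpha}^{1/2}\|\bv^\alpha-\Pi_{SZ}^\alpha\bv^\alpha\|_{1,K^\alpha}) \leq C h_{K^\alpha}^{1/2}\|\bv^\alpha\|_{1,S_{K^\alpha}}$. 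Multiplying these two bounds, the powers of $h_{K^\alpha}$ combine to give $(\bsig_h^\alpha\bn^\alpha, \bv^\alpha - \Pi_{SZ}^\alpha\bv^\alpha)_{\partial K^\alpha} \leq C(\|\bsig_h^\alpha\|_{0,K^\alpha} + h_{K^\alpha}\|\divt\bsig_h^\alpha\|_{0,K^\alpha})\|\bv^\alpha\|_{1,S_{K^\alpha}}$. Summing over $K^\alpha$, using a discrete Cauchy--Schwarz and the finite-overlap property of the patches $S_{K^\alpha}$ (shape-regularity), yields $C\|\bsig_h^\alpha\|_{\divt,h}\|\bv^\alpha\|_{1,\alpha}$. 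Combining the two terms finishes the proof. I expect the bookkeeping of the face-cancellation — precisely verifying that the moment-continuity in the definition \eqref{def-sigmah} of $\Sigma_h^\alpha$ is exactly what is needed against $\Pi_{SZ}^\alpha\bv^\alpha|_F \in P_1(F)$, and that the same argument applies to the $\tilde{\Sigma}^\alpha$ component where the normal trace is genuinely continuous in $H^{-1/2}$ — to be the step requiring the most care.
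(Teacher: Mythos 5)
Your proposal is built on a false premise, it is far more complicated than necessary, and the key analytic step has a genuine gap.

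\textbf{The false premise.} You assert that ``a naive sum over elements produces a term $\sum_K h_K^{-1}\|\bv^\alpha\|_{0,K}^2$ that is not bounded by $\|\bv^\alpha\|_{1,\alpha}^2$,'' and therefore reach for the Scott--Zhang decomposition and a face-cancellation argument. That is not what happens here. The paper's proof simply integrates by parts element-by-element:
$$
(\bsig_h^\alpha \bn^\alpha, \bv^\alpha)_{\partial K^\alpha} = (\divt \bsig_h^\alpha, \bv^\alpha)_{K^\alpha} + (\bsig_h^\alpha, \bvar^\alpha(\bv^\alpha))_{K^\alpha},
$$
(the replacement of $\nabla\bv^\alpha$ by $\bvar^\alpha(\bv^\alpha)$ using the symmetry of $\bsig_h^\alpha$), sums over $K^\alpha$, and applies Cauchy--Schwarz. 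The face pairing is converted directly to volume integrals, so no trace estimate on $\bsig_h^\alpha\bn^\alpha$ and no negative power of $h_K$ ever appears. The claimed left-hand side is bounded by $\|\divt_h\bsig_h^\alpha\|_{0,\alpha}\|\bv^\alpha\|_{0,\alpha} + \|\bsig_h^\alpha\|_{0,\alpha}|\bv^\alpha|_{1,\alpha} \leq C\|\bsig_h^\alpha\|_{\divt,h}\|\bv^\alpha\|_{1,\alpha}$ in one line. Your Scott--Zhang subtraction, moment-continuity cancellation, and scaled trace inequalities are all unnecessary for this lemma.

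\textbf{The gap.} Even setting aside the detour, your ``oscillation term'' step rests on the scaled trace estimate
$\|\bsig_h^\alpha\bn^\alpha\|_{0,\partial K^\alpha} \leq C\bigl(h_{K^\alpha}^{-1/2}\|\bsig_h^\alpha\|_{0,K^\alpha} + h_{K^\alpha}^{1/2}\|\divt\bsig_h^\alpha\|_{0,K^\alpha}\bigr)$,
which is not valid for the $\tilde{\Sigma}^\alpha$ component. For a polynomial $\btau_h\in\Sigma_h^\alpha$ an inverse inequality gives $\|\btau_h\bn\|_{0,\partial K}\leq Ch_K^{-1/2}\|\btau_h\|_{0,K}$, but for a non-polynomial $\bsig\in\tilde{\Sigma}^\alpha=\Sigma^\alpha\cap H^s(\alpha;\mathbb{S})$ with $s>\tfrac12$ the correct trace bound is
$\|\bsig\|_{0,\partial K}\leq C\bigl(h_K^{-1/2}\|\bsig\|_{0,K}+h_K^{s-1/2}|\bsig|_{s,K}\bigr)$,
and the $H^s$ semi-norm $|\bsig|_{s,K}$ is \emph{not} controlled by $\|\divt\bsig\|_{0,K}$: the $H(\divt)$ quantities control only one particular first-order derivative combination, not the full Sobolev semi-norm needed for an $L^2$ trace. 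So the bound on the oscillation term, and with it the whole proposal, breaks down for the $\tilde{\Sigma}^\alpha$ part of $\bsig_h^\alpha$. Note, incidentally, that the Scott--Zhang decomposition and face-cancellation idea you are reaching for \emph{is} the right tool for the companion Lemma~\ref{lem:interface-term}, where $\bv^\beta$ lives only on $\Gamma$ and must be extended into $\alpha$; but there the inner term $(\bsig_h^\alpha\bn^\alpha,\Pi_{SZ}^\alpha\hat{\bv}^\beta)_{\partial\mathcal{T}_h^\alpha}$ is handled precisely by invoking Lemma~\ref{lem:interface-term-alpha}, whose proof is the direct integration-by-parts argument above, not a trace estimate.
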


\begin{proof}
The proof can be derived by integrating by parts on each element $K^\alpha \in \mathcal{T}_h^\alpha$ and using the Cauchy-Schwarz inequality as follows:
$$
\begin{aligned}
&(\bsig_h^\alpha \bn^\alpha, {\bv}^\alpha)_{\partial \mathcal{T}_h^\alpha} = \sum_{K^\alpha \in \mathcal{T}_h^\alpha} (\divt \bsig_h^\alpha, {\bv}^\alpha)_{K^\alpha} +     \sum_{K^\alpha \in \mathcal{T}_h^\alpha} ( \bsig_h^\alpha, \bvar^\alpha(\bv^\alpha))_{K^\alpha} \\
& \quad \leq \sum_{K^\alpha \in \mathcal{T}_h^\alpha} \| \divt \bsig_h^\alpha\|_{0,K^\alpha}\|{\bv}^\alpha\|_{0,K^\alpha} +     \sum_{K^\alpha \in \mathcal{T}_h^\alpha} \| \bsig_h^\alpha\|_{0,K^\alpha} \| \bvar^\alpha(\bv^\alpha) \|_{0,K^\alpha} \\
&\quad  \leq \| \divt_h \bsig_h^\alpha \|_{0,\alpha} \| \bv^\alpha\|_{0,\alpha} + C \| \bsig_h^\alpha\|_{0,\alpha} \| \bv^\alpha\|_{1,\alpha} \leq C \| \bsig_h^\alpha\|_{\divt,h} \|\bv^\alpha\|_{1,\alpha}.
\end{aligned}
$$
\end{proof}

\begin{lemma} 
\label{lem:interface-term}
Given $\bsig_h^\alpha \in \tilde{\Sigma}^\alpha + \Sigma_h^\alpha$ and $\bv^\beta \in H^1(\beta;\mathbb{R}^3)$, it holds that
$$
(\bsig_h^\alpha \bn^\alpha, \bv^\beta)_{\Gamma} \leq C \| \bsig_h^\alpha \|_{\divt,h} \| \bv^\beta \|_{1,\Gamma}.
$$    
\end{lemma}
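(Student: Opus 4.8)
The plan is to bound the interface term $(\bsig_h^\alpha \bn^\alpha, \bv^\beta)_\Gamma$ by reducing it to the situation of Lemma \ref{lem:interface-term-alpha}, where the test function lives in $H^1(\alpha;\mathbb{R}^3)$ rather than merely on $\Gamma$. Since $\bn^\alpha$ on $\Gamma$ points in the $x_3$-direction (by the geometry in Figure \ref{figmodel}), and $\Sigma_h^\alpha$ functions have normal components supported (as a boundary datum) on $\Gamma$, the quantity $(\bsig_h^\alpha \bn^\alpha, \bv^\beta)_\Gamma$ only sees the trace of $\bv^\beta$ on $\Gamma$. So the first step is to replace $\bv^\beta$ by \emph{any} convenient $H^1(\alpha;\mathbb{R}^3)$ function agreeing with $\bv^\beta$ on $\Gamma$: take $\bw^\alpha \coloneqq \widehat{\bv}^\beta|_\alpha$, a bounded extension to the body with $\|\bw^\alpha\|_{1,\alpha} \le C\|\bv^\beta\|_{1,\Gamma}$ (a standard extension from the polygonal face $\Gamma \subset \partial\alpha$ into $\alpha$; note we only need the $H^1$-norm on $\Gamma$, i.e.\ one can even factor through $H^{1/2}(\Gamma)$ but the cruder $H^1(\Gamma)$ bound suffices and is what the statement asks for).

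The second step is to relate $(\bsig_h^\alpha \bn^\alpha, \bw^\alpha)_\Gamma$ to the full element-boundary sum $(\bsig_h^\alpha \bn^\alpha, \bw^\alpha)_{\partial \mathcal{T}_h^\alpha}$ appearing in Lemma \ref{lem:interface-term-alpha}. For this I would unfold the definition \eqref{def:interface-product}: the element-boundary sum splits into contributions from faces on $\partial\alpha\backslash\Gamma$, faces on $\Gamma$, and interior faces. On $\partial\alpha\backslash\Gamma$ the contribution vanishes: for a genuinely $H(\divt;\mathbb{S})$-conforming piece $\btau^\alpha \in \tilde\Sigma^\alpha$ the normal trace vanishes by definition of $\Sigma^\alpha$, and for the nonconforming piece in $\Sigma_h^\alpha$ the moments of $\btau_h^\alpha\bn^\alpha$ vanish on boundary faces of $\partial\alpha\backslash\Gamma$ by \eqref{def-sigmah}; since $\btau_h^\alpha\bn^\alpha$ restricted to such a face is a (vector) polynomial of degree $\le 1$, vanishing of its moments up to degree $1$ forces it to vanish outright. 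On interior faces the contribution does \emph{not} vanish in general for the nonconforming element — but here is where the degree-$1$ continuity of moments of $\btau_h^\alpha\bn^\alpha$ across interior faces is used together with a discrete trick: one replaces $\bw^\alpha$ by its Scott--Zhang interpolant $\Pi^\alpha_{SZ}\bw^\alpha$ inside the interior-face terms. Because $\Pi^\alpha_{SZ}\bw^\alpha$ is continuous and piecewise linear, and the jump of $\btau_h^\alpha\bn^\alpha$ on an interior face has vanishing moments against any polynomial of degree $\le 1$, the interior-face contribution paired against $\Pi^\alpha_{SZ}\bw^\alpha$ cancels when summed over the two adjacent elements; the remaining error $(\bsig_h^\alpha\bn^\alpha, \bw^\alpha - \Pi^\alpha_{SZ}\bw^\alpha)_{\text{interior faces}}$ is controlled by a scaled trace inequality and the approximation property \eqref{SZ} of $\Pi^\alpha_{SZ}$, giving a bound $C\|\bsig_h^\alpha\|_{\divt,h}\,|\bw^\alpha|_{1,\alpha}$ after absorbing the mesh-size powers (the $h_F^{1/2}$ weights in $\|\cdot\|_{\divt,h}$ from \eqref{def:divh} are designed precisely for the $\Gamma$-face terms, and for interior faces one uses instead that $h_F\|\btau_h^\alpha\bn^\alpha\|_{0,F}^2 \le C\|\btau_h^\alpha\|_{0,K^\alpha}^2$ by inverse/trace estimates on the mesh). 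Putting these together,
\[
(\bsig_h^\alpha \bn^\alpha, \bw^\alpha)_\Gamma \;=\; (\bsig_h^\alpha\bn^\alpha, \bw^\alpha)_{\partial\mathcal{T}_h^\alpha} \;-\; (\text{interior-face remainder}),
\]
and Lemma \ref{lem:interface-term-alpha} bounds the first term by $C\|\bsig_h^\alpha\|_{\divt,h}\|\bw^\alpha\|_{1,\alpha}$.

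The third and final step is bookkeeping: combine the two estimates with the extension bound $\|\bw^\alpha\|_{1,\alpha}\le C\|\bv^\beta\|_{1,\Gamma}$ to conclude $(\bsig_h^\alpha\bn^\alpha,\bv^\beta)_\Gamma \le C\|\bsig_h^\alpha\|_{\divt,h}\|\bv^\beta\|_{1,\Gamma}$. I expect the main obstacle to be the interior-face terms: unlike Lemma \ref{lem:interface-term-alpha}, where a globally-$H^1$ test function makes all internal jumps telescope automatically, here the chosen extension $\bw^\alpha$ is $H^1(\alpha)$ so \emph{that} part is actually fine — the genuine subtlety is only that $\tilde\Sigma^\alpha + \Sigma_h^\alpha$ functions are not $H(\divt)$-conforming, so one must carefully verify that the boundary moments conditions in \eqref{def-sigmah} plus the degree-$1$ polynomiality of the normal trace on each face kill both the $\partial\alpha\backslash\Gamma$ terms and the $\Pi^\alpha_{SZ}$-paired interior terms, and that the leftover approximation terms scale correctly against the mesh-dependent norm \eqref{def:divh}. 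Once that scaling is checked, the rest is a routine application of Lemma \ref{lem:interface-term-alpha}.
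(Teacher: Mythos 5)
Your high-level plan — extend $\bv^\beta$ to an $H^1(\alpha;\mathbb{R}^3)$ function, interpose the Scott--Zhang interpolant so the degree-$1$ moment conditions in \eqref{def-sigmah} can act, and close with Lemma~\ref{lem:interface-term-alpha} — is exactly the paper's strategy. But you apply the Scott--Zhang decomposition to the wrong set of faces, and this is where the argument breaks. You pass to the full element-boundary sum and then try to control the remainder $(\bsig_h^\alpha\bn^\alpha, \bw^\alpha-\Pi^\alpha_{SZ}\bw^\alpha)$ on \emph{interior} faces (and on $\partial\alpha\backslash\Gamma$) via the inverse/trace estimate $h_F\|\btau_h^\alpha\bn^\alpha\|_{0,F}^2\le C\|\btau_h^\alpha\|_{0,K^\alpha}^2$. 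That estimate is valid only for piecewise polynomials, but the lemma is stated for $\bsig_h^\alpha\in\tilde\Sigma^\alpha+\Sigma_h^\alpha$, and the $\tilde\Sigma^\alpha$ part is only $H^s$ with $s>1/2$. You could split $\bsig_h^\alpha$ into its conforming and nonconforming pieces and note that the conforming piece contributes nothing to interior jumps, but the decomposition is not unique, and the resulting constant would depend on the choice of decomposition rather than on $\|\bsig_h^\alpha\|_{\divt,h}$ alone. Your other vanishing claim is also not correct as stated: for the Hu--Ma element in \eqref{def-sigmah}, $\btau_h^\alpha\bn^\alpha$ restricted to a face is a polynomial of degree up to $2$ (the spaces $P_{ij}$ contain quadratic terms that survive on faces), so vanishing of moments up to degree $1$ on $\partial\alpha\backslash\Gamma$ does \emph{not} make the normal trace vanish pointwise; it only kills the pairing against degree-$\le 1$ test functions.

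The paper's proof avoids both problems by splitting the $\Gamma$-integral itself: $(\bsig_h^\alpha\bn^\alpha,\hat\bv^\beta)_\Gamma=(\bsig_h^\alpha\bn^\alpha,\Pi^\alpha_{SZ}\hat\bv^\beta)_\Gamma+(\bsig_h^\alpha\bn^\alpha,\hat\bv^\beta-\Pi^\alpha_{SZ}\hat\bv^\beta)_\Gamma$. Because $\Pi^\alpha_{SZ}\hat\bv^\beta$ is continuous piecewise linear, the first term equals the full element-boundary pairing $(\bsig_h^\alpha\bn^\alpha,\Pi^\alpha_{SZ}\hat\bv^\beta)_{\partial\mathcal{T}_h^\alpha}$ — the moment conditions (not pointwise vanishing) kill the interior and $\partial\alpha\backslash\Gamma$ contributions, for both the conforming and nonconforming parts simultaneously — and Lemma~\ref{lem:interface-term-alpha} bounds it directly by $\|\bsig_h^\alpha\|_{\divt,h}\|\hat\bv^\beta\|_{1,\alpha}$ with no decomposition. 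The second term lives only on $\Gamma$, where the trace inequality and the Scott--Zhang estimate \eqref{SZ} give $Ch_F^{1/2}\|\bsig_h^\alpha\|_{0,F}\|\hat\bv^\beta\|_{1,S_{K^\alpha}}$ per face, and summing uses precisely the $\Gamma$-face weighted term $\sum_{F\in\mathcal{F}_\Gamma^\alpha}h_F\|\bsig_h^\alpha\bn^\alpha\|_{0,F}^2$ built into $\|\cdot\|_{\divt,h}$ in \eqref{def:divh}, so no inverse estimate on $\bsig_h^\alpha$ is ever required. If you redo your computation putting the Scott--Zhang error on the $\Gamma$-faces rather than on the interior faces, the argument closes cleanly.
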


\begin{proof}
Let $\hat{\bv}^\beta \in H^1(\alpha;\mathbb{R}^3)$ denote the bounded extension of the restriction of $\bv^\beta$ on $\Gamma$, namely, 
\begin{equation}
\label{estimate-extension}
\| \hat{\bv}^\beta \|_{1,\alpha} \leq  C \| \bv^\beta\|_{1,\Gamma}.  
\end{equation}
Recall the Scott-Zhang operator $\Pi_{SZ}^\alpha$  with \eqref{SZ}. A triangle inequality leads to
\begin{equation}
\label{estimate-SZ}
\| \Pi_{SZ}^\alpha \hat{\bv}^\beta \|_{1,\alpha} \leq \| \hat{\bv}^\beta - \Pi_{SZ}^\alpha \hat{\bv}^\beta \|_{1,\alpha} + \| \hat{\bv}^\beta\|_{1,\alpha}
\leq  C \| \hat{\bv}^\beta\|_{1,\alpha}.   
\end{equation}
Note that
\begin{equation}
\label{eq-decomposition}
\begin{aligned}
( \bsig_h^\alpha \bn^\beta, \bv^\beta )_{\Gamma} & = ( \bsig_h^\alpha \bn^\alpha, \hat{\bv}^\beta )_{\Gamma} \\
& = (\bsig_h^\alpha \bn^\alpha, \Pi_{SZ}^\alpha \hat{\bv}^\beta)_{\Gamma}
 + (\bsig_h^\alpha \bn^\alpha, \hat{\bv}^\beta -\Pi_{SZ}^\alpha  \hat{\bv}^\beta  )_{\Gamma}.    
\end{aligned}
\end{equation}
The definitions of $\tilde{\Sigma}^\alpha$ in \eqref{def:tSigma} and $\Sigma_h^\alpha$ in \eqref{def-sigmah} imply
$$
(\bsig_h^\alpha \bn^\alpha, \Pi_{SZ}^\alpha \hat{\bv}^\beta)_{\Gamma} = (\bsig_h^\alpha \bn^\alpha, \Pi_{SZ}^\alpha \hat{\bv}^\beta)_{\partial \mathcal{T}_h^\alpha}.
$$
This, Lemma \ref{lem:interface-term-alpha} and \eqref{estimate-extension}--\eqref{estimate-SZ} give
\begin{equation}
\label{case1-1}
\begin{aligned}
(\bsig_h^\alpha \bn^\alpha, \Pi_{SZ}^\alpha \hat{\bv}^\beta)_{\Gamma} &= 
 (\bsig_h^\alpha \bn^\alpha, \Pi_{SZ}^\alpha \hat{\bv}^\beta)_{\partial \mathcal{T}_h^\alpha} \leq C \| \bsig_h^\alpha \|_{\divt,h} \| \Pi_{SZ}^\alpha \hat{\bv}^\beta\|_{1,\alpha} \\
 & \leq C \| \bsig_h^\alpha \|_{\divt,h} \| \hat{\bv}^\beta\|_{1,\alpha} \leq C \| \bsig_h^\alpha \|_{\divt,h} \| \bv^\beta\|_{1,\Gamma}.    
\end{aligned}
\end{equation}
Recall the definition of $\mathcal{F}_{\Gamma}^\alpha$ in \eqref{def:F-gamma}. For any $F \in \mathcal{F}_{\Gamma}^\alpha$ with $F=K^\alpha \cap \Gamma$, the Cauchy-Schwarz inequality, the trace inequality and \eqref{SZ} give
$$
\begin{aligned}
|(\bsig_h^\alpha \bn^\alpha,  \hat{\bv}^\beta - \Pi_{SZ}^\alpha \hat{\bv}^\beta )_{F} |
& \leq \|\bsig_h^\alpha \|_{0,F} \|  \hat{\bv}^\beta - \Pi_{SZ}^\alpha \hat{\bv}^\beta \|_{0,F} \\
&\leq  C\|\bsig_h^\alpha \|_{0,F}  \|  \hat{\bv}^\beta - \Pi_{SZ}^\alpha \hat{\bv}^\beta \|_{0,K^\alpha}^{\frac{1}{2}}\|  \hat{\bv}^\beta - \Pi_{SZ}^\alpha \hat{\bv}^\beta \|_{1,K^\alpha}^{\frac{1}{2}} \\ 
&\leq C h_{F}^{\frac{1}{2}}\| \bsig_h^\alpha \|_{0,F} \| \hat{\bv}^\beta\|_{1,S_{K^\alpha}}.
\end{aligned}
$$
This, the Cauchy-Schwarz inequality and \eqref{estimate-extension} lead to
\begin{equation}
\label{case1-2}
\begin{aligned}
|( \bsig_h^\alpha \bn^\alpha,  \hat{\bv}^\beta - \Pi_{SZ}^\alpha \hat{\bv}^\beta )_{\Gamma}| 
& \leq  | \sum_{F\in \mathcal{F}_\Gamma^\alpha} ( \bsig_h^\alpha \bn^\alpha,  \hat{\bv}^\beta - \Pi_{SZ}^\alpha \hat{\bv}^\beta )_{F} |  \\
& \leq C \| \bsig_h^\alpha \|_{\divt,h}  \| \hat{\bv}^\beta\|_{1,\alpha} \leq C \| \bsig_h^\alpha \|_{\divt,h}  \| \bv^\beta \|_{1,\Gamma}.  
\end{aligned}    
\end{equation}
The combination of \eqref{eq-decomposition}--\eqref{case1-2} completes the proof.
\end{proof}

Lemma \ref{lem:interface-term}   holds without any mesh assumptions. This lemma is crucial to the error analysis for the nonconforming finite element method on non-matching meshes and the subsequent theorem.

\begin{theorem}
\label{thm:wellposed-nonconforming-discrete}
Problem \eqref{pro: weak-discrete-nc} is well-posed. 
\end{theorem}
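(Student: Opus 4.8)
The plan is to deduce Theorem~\ref{thm:wellposed-nonconforming-discrete} from Theorem~\ref{thm:conti-nonconforming-error}, which already shows that \eqref{pro: weak-discrete-nc} is well-posed as soon as Assumptions $\mathrm{(A1)}$--$\mathrm{(A3)}$ hold for the chosen spaces (Assumptions $\mathrm{(A4)}$--$\mathrm{(A5)}$ are not needed for well-posedness). So the work reduces to checking these three structural hypotheses for the nonconforming $H(\divt,\alpha;\mathbb{S})\times L^2$ element of \cite{HuMa2018} on $\alpha$, the vectorial piecewise linear Lagrange element for the in-plane displacement, and the Morley element for the transverse displacement.

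First I would verify $\mathrm{(A1)}$, i.e.\ that $\interleave\bullet\interleave_{h}$ separates points on $(\tilde{\Sigma}^\alpha+\Sigma_h^\alpha)\times(W^\beta+W_h^\beta)$. Vanishing of $\interleave(\btau_h^\alpha,\bv_h^\beta)\interleave_{h}$ forces $\|\btau_h^\alpha\|_{0,\alpha}=0$, hence $\btau_h^\alpha=0$; it forces $|v_{Ih}^\beta|_{1,\beta}=0$, which with the clamped condition built into $W_{1h}^\beta$ gives $v_{1h}^\beta=v_{2h}^\beta=0$; and it forces $|v_{3h}^\beta|_{2,h}=0$, so $v_{3h}^\beta$ is affine on each triangle, whence vertex continuity, continuity of the normal derivative at interior edge midpoints, and the vanishing conditions on $\partial\beta$ force $v_{3h}^\beta\equiv0$. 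This last point is the classical fact that the broken $H^2$ seminorm is a norm on the Morley space, cf.\ \cite{ming2006morley}.

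Next I would check $\mathrm{(A2)}$. Boundedness of $b_h$ is the Cauchy--Schwarz inequality together with the definition \eqref{def:divh} of $\|\bullet\|_{\divt,h}$. For $a_h$, the bulk terms $(\mathcal{C}_0^{-1}\bsig_h^\alpha,\btau_h^\alpha)_\alpha$, $(\bN^\beta(\bu_h^\beta),\be^\beta(\bv_h^\beta))_\beta$ and $(\bM_h^\beta(\bu_h^\beta),\bK_h^\beta(\bv_h^\beta))_\beta$ are bounded by $\|\bsig_h^\alpha\|_{0,\alpha}\|\btau_h^\alpha\|_{0,\alpha}$, $\sum_I|u_{Ih}^\beta|_{1,\beta}|v_{Ih}^\beta|_{1,\beta}$ and $|u_{3h}^\beta|_{2,h}|v_{3h}^\beta|_{2,h}$ respectively, using the boundedness of the constitutive tensors. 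The crux is the pair of interface terms: since $\Pi^\beta\bv_h^\beta\in H^1(\beta;\mathbb{R}^3)$, Lemma~\ref{lem:interface-term} gives $(\bsig_h^\alpha\bn^\alpha,\Pi^\beta\bv_h^\beta)_\Gamma\le C\|\bsig_h^\alpha\|_{\divt,h}\|\Pi^\beta\bv_h^\beta\|_{1,\Gamma}$, and \eqref{ineq-norm} bounds the right factor by $C(\sum_I|v_{Ih}^\beta|_{1,\beta}^2+|v_{3h}^\beta|_{2,h}^2)^{1/2}$; the term with $\btau_h^\alpha$ and $\Pi^\beta\bu_h^\beta$ is handled symmetrically. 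Summing the pieces yields $|a_h(\bsig_h^\alpha,\bu_h^\beta;\btau_h^\alpha,\bv_h^\beta)|\le C\interleave(\bsig_h^\alpha,\bu_h^\beta)\interleave_h\interleave(\btau_h^\alpha,\bv_h^\beta)\interleave_h$ with no mesh-compatibility requirement at $\Gamma$. This is the step I expect to be the heart of the matter, although it has essentially been prepared by Lemmas~\ref{lem:interface-term-alpha}--\ref{lem:interface-term}; the subtlety is that boundedness is needed on the enlarged space $(\tilde{\Sigma}^\alpha+\Sigma_h^\alpha)\times(W^\beta+W_h^\beta)$, which is exactly the setting those lemmas cover.

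Finally, $\mathrm{(A3)}$ is the known stability of the \cite{HuMa2018} pair, transported from the norm $\|\bullet\|_{\divt,\alpha}$ to $\|\bullet\|_{\divt,h}$. Since $\divt_h\Sigma_h^\alpha=V_h^\alpha$, any $\btau_h^\alpha\in B_h^{nc}$ is divergence-free, and a scaling (inverse trace) inequality for polynomials gives $h_F\|\btau_h^\alpha\bn^\alpha\|_{0,F}^2\le C\|\btau_h^\alpha\|_{0,K^\alpha}^2$ for each face $F\subset\Gamma$ of $K^\alpha$, so $\|\btau_h^\alpha\|_{\divt,h}$ and $\|\btau_h^\alpha\|_{\divt,\alpha}$ are equivalent on $\Sigma_h^\alpha$; both the coercivity on $B_h^{nc}$ and the discrete inf-sup condition then follow from \cite{HuMa2018}. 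With $\mathrm{(A1)}$--$\mathrm{(A3)}$ in hand, Theorem~\ref{thm:conti-nonconforming-error} gives the asserted well-posedness of \eqref{pro: weak-discrete-nc}.
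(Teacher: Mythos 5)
Your proposal is correct and follows essentially the same route as the paper: reduce well-posedness to Assumptions $\mathrm{(A1)}$--$\mathrm{(A3)}$ via Theorem~\ref{thm:conti-nonconforming-error}, verify $\mathrm{(A1)}$ directly, verify $\mathrm{(A2)}$ via Lemma~\ref{lem:interface-term} together with \eqref{ineq-norm}, and verify $\mathrm{(A3)}$ via the inverse-trace norm equivalence \eqref{ineq:core-discrete} that transfers the known coercivity and inf-sup stability of the \cite{HuMa2018} pair to $\|\bullet\|_{\divt,h}$. The paper is slightly more explicit in that it spells out the Fortin-operator construction for the inf-sup condition (following \cite{gopalakrishnan2011symmetric, arnold2014nonconforming}) rather than attributing it to norm equivalence alone, but the substance is the same.
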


\begin{proof}
According to Theorem \ref{thm:conti-nonconforming-error}, the well-posedness of \eqref{pro: weak-discrete-nc} follows from
Assumptions $\mathrm{(A1)}$--$\mathrm{(A3)}$.
It is easy to verify Assumption $\mathrm{(A1)}$. 
Note that $\Pi^\beta \bv_h^\beta \in H^1(\beta;\mathbb{R}^3)$ for all $\bv_h^\beta \in W^\beta + W_h^\beta$, the boundedness requirements in Assumption $\mathrm{(A2)}$ can be verified by Lemma \ref{lem:interface-term}. The inverse estimate shows
\begin{equation}
\label{ineq:core-discrete}
\| \btau_h^\alpha\|_{0,\alpha}^2 +  \sum_{F\in \mathcal{F}_\Gamma^\alpha} h_F \| \btau_h^\alpha \bn^\alpha\|_{0,F}^2 \leq C \| \btau_h^\alpha\|_{0,\alpha}^2 \text{ for all } \btau_h^\alpha \in \Sigma_h^\alpha.     
\end{equation}
Since $\divt_h \Sigma_h^\alpha \subset V_h^\alpha$ \cite{arnold2014nonconforming}, $\btau_h^\alpha \in B_h^{nc}$ implies $\divt_h \btau_h^\alpha =0$. 
This, the definition of $\|\bullet\|_{\divt,h}$ in \eqref{def:divh} and \eqref{ineq:core-discrete} lead to 
\begin{equation}
\label{eq:4.801}
(\mathcal{C}_0^{-1} \btau_h^\alpha, \btau_h^\alpha)_\alpha \geq C \| \btau_h^\alpha \|_{\divt,h} \text{ for all } \btau_h^\alpha \in B_h^{nc}.    
\end{equation}
For any $\bv_h^\alpha \in V_h^\alpha$, there exists a $\boldsymbol{q}^\alpha \in H^1(\alpha;\mathbb{S}) \cap \Sigma^\alpha$ such that $\divt_h \boldsymbol{q}^\alpha = \bv_h^\alpha$ and
$\| \boldsymbol{q}^\alpha\|_{1,\alpha}  \leq C \| \bv_h^\alpha\|_{0,\alpha}$, see e.g., \cite{arnold2008finite, pauly2022hilbert}. Let $P_h^\alpha$ be the $L^2$ projection operator onto $V_h^\alpha$.
Following similar procedures as in \cite{gopalakrishnan2011symmetric, arnold2014nonconforming} and noting that \eqref{ineq:core-discrete} for $\Sigma_h^\alpha$, there exists a Fortin operator $I_h^\alpha:H^1(\alpha;\mathbb{S})\cap \Sigma^\alpha \rightarrow \Sigma_h^\alpha$ satisfying $\divt_h I_h^\alpha \bsig^\alpha = P_h^\alpha \divt_h \bsig^\alpha$ for all $\bsig^\alpha \in H^1(\alpha; \mathbb{S})$ and $\| I_h^\alpha \bsig^\alpha \|_{\divt,h} \leq C \| \bsig^\alpha\|_{1,\alpha}$. Thus, for all $\bv_h^\alpha \in V_h^\alpha$, by taking $\btau_h^\alpha = I_h^\alpha \boldsymbol{q}^\alpha$, it holds that
\begin{equation}
\label{eq:4.802}
\inf_{\bv_h^\alpha \in V_h^\alpha} \sup_{\btau_h^\alpha \in \Sigma_h^\alpha} \frac{(\divt \btau_h^\alpha, \bv_h^\alpha)_\alpha}{\| \bv_h^\alpha\|_{0,\alpha} \| \btau_h^\alpha\|_{\divt,h}} \geq\frac{1}{C} \inf_{\bv_h^\alpha \in V_h^\alpha}  \frac{\|\bv_h^\alpha\|_{0,\alpha}^2 }{\|\bv_h^\alpha\|_{0,\alpha}^2 } \geq \frac{1}{C} .       
\end{equation} 
The combination of \eqref{eq:4.801} and \eqref{eq:4.802} proves Assumption $\mathrm{(A3)}$, which completes the proof.
\end{proof}

\begin{remark}
As noted in Remark \ref{remark:A2}, introducing the subspace $\tilde{\Sigma}^\alpha$ in Assumption $\mathrm{(A2)}$ is not required to analyze the well-posedness of \eqref{pro: weak-discrete-nc}. Accordingly, define the following norm: 
$$
\| \btau_h^\alpha\|_{\divt,h}^2 \coloneqq \| \btau_h^\alpha\|_{0,\alpha}^2 + \| \divt_h \btau_h^\alpha \|_{0,\alpha}^2 \text{ for all } \btau_h^\alpha \in \Sigma_h^\alpha.     
$$
Note that this definition is equivalent to the definition of the norm $\| \bullet \|_{\divt,h}^2$ given in \eqref{def:divh} for all $\btau_h^\alpha \in \Sigma_h^\alpha$. Following similar procedures, one can obtain the well-posedness.
\end{remark}

For the simplicity of error analysis, suppose the following regularity assumptions \cite{huang2006finite} 
\begin{equation}
\label{regularity}    
\bu^\alpha \in H^2(\alpha;\mathbb{R}^3),\, \bu^\beta \in H^2(\beta;\mathbb{R}^2) \times H^3(\beta), \, \bft^\alpha \in H^1(\alpha;\mathbb{R}^3),\, \bft^\beta \in L^2(\beta,\mathbb{R}^3).
\end{equation} 
This leads to $\bsig^\alpha \in H^s(\alpha;\mathbb{S})$ with $s=1$, i.e., $\bsig^\alpha \in \tilde{\Sigma}^\alpha$.
The following lemma will be used to obtain the error estimate of   \eqref{pro: weak-discrete-nc}. Recall $W_2^\beta = H_0^2(\beta)$ and $W_{2h}^\beta$ is the Morley finite element space. In the following lemma and subsequent context, with a slight abuse of notation, $\bM^\beta(v^\beta)$ and $\bK^\beta(v^\beta)$ denote $\bM^\beta(\bv^\beta)$ and $\bK^\beta(\bv^\beta)$ with $\bv^\beta = (0,0,v^\beta)$. The same convention applies to $\bM_h^\beta(v^\beta)$ and $\bK_h^\beta(v^\beta)$.

\begin{lemma}[\cite{hu2014new, gudi2010new}]
\label{lem-E32}
Given $g^\beta \in L^2(\beta)$, let $w^\beta \in W_2^\beta$ be the solution of 
$$
(\bM^\beta(w^\beta), \bK^\beta(v^\beta))_\beta = (g^\beta, v^\beta)_\beta \text{ for all } v^\beta \in W_2^\beta.
$$
Then it holds that
$$
\begin{aligned}
& |(\bM_h^\beta (w^\beta), \bK_h^\beta(w_h^\beta))_\beta -(g^\beta,w_h^\beta)_{\beta}| \\
& \leq\quad C\left\{\inf_{\upsilon_h\in W_{2h}^\beta} |w^\beta-\upsilon_h|_{2,h}+\left(\sum_{K^\beta \in \mathcal{T}_h^\beta}h_{K^\beta}^4\inf_{q\in {P}_{k}(K^\beta)}\|g^\beta-q\|_{0,K^\beta}^2\right)^{\frac{1}{2}}\right\} |w_h^\beta|_{2,h}
\end{aligned}
$$
for all $w_h^\beta \in W_{2h}^\beta$ with any $k \geq 0$.
\end{lemma}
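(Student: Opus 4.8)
The plan is to establish this by a \emph{medius}-type argument (combining a priori and a posteriori techniques), following \cite{gudi2010new, hu2014new}; the central device is a conforming companion (enriching) operator for the Morley space. First I would fix an operator $E_h\colon W_{2h}^\beta\to S_h^\beta$, where $S_h^\beta\subset W_2^\beta=H_0^2(\beta)$ is a $C^1$-conforming finite element space on $\mathcal{T}_h^\beta$ (an HCT-type macroelement space, augmented by interior $C^1$-bubbles), enjoying three properties: \textup{(i)} $E_h w_h^\beta$ reproduces the Morley degrees of freedom, i.e.\ it agrees with $w_h^\beta$ at every vertex and, on every edge $F$, the edgewise average of $\partial_{n^\beta}(E_h w_h^\beta)$ equals the Morley midpoint value of $\partial_{n^\beta}w_h^\beta$ (for Morley functions the edge average and the midpoint value coincide); \textup{(ii)} on each $K^\beta$, $\int_{K^\beta}(E_h w_h^\beta-w_h^\beta)\,q=0$ for all $q\in P_k(K^\beta)$, arranged by an interior-bubble correction that does not disturb \textup{(i)}; \textup{(iii)} the local stability/approximation estimate
\[
\sum_{K^\beta\in\mathcal{T}_h^\beta}\Big(h_{K^\beta}^{-4}\|w_h^\beta-E_h w_h^\beta\|_{0,K^\beta}^2+|w_h^\beta-E_h w_h^\beta|_{2,K^\beta}^2\Big)\le C\,|w_h^\beta|_{2,h}^2 ,
\]
which follows from the standard edge-jump bounds for Morley functions (the tangential and normal jumps of $\nabla w_h^\beta$ across an edge are degree-one polynomials vanishing respectively at the endpoints and at the midpoint, hence are controlled by $|w_h^\beta|_{2,h}$).

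With $E_h$ in hand, and using $\bM_h^\beta(w^\beta)=\bM^\beta(w^\beta)$ since $w^\beta\in H_0^2(\beta)$ and $\bK_h^\beta(E_h w_h^\beta)=\bK^\beta(E_h w_h^\beta)$ since $E_h w_h^\beta\in H^2(\beta)$, I would decompose
\[
(\bM_h^\beta(w^\beta),\bK_h^\beta(w_h^\beta))_\beta-(g^\beta,w_h^\beta)_\beta
=\underbrace{(\bM^\beta(w^\beta),\bK_h^\beta(w_h^\beta-E_h w_h^\beta))_\beta}_{T_a}
+\underbrace{(g^\beta,E_h w_h^\beta-w_h^\beta)_\beta}_{T_b},
\]
because the cross term $(\bM^\beta(w^\beta),\bK^\beta(E_h w_h^\beta))_\beta$ equals $(g^\beta,E_h w_h^\beta)_\beta$ by testing the equation defining $w^\beta$ with the admissible function $E_h w_h^\beta\in W_2^\beta$. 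For $T_b$, I would subtract on each $K^\beta$ the local $L^2$-best approximation $q_{K^\beta}\in P_k(K^\beta)$ of $g^\beta$, discard the polynomial part by property \textup{(ii)}, and apply Cauchy--Schwarz together with \textup{(iii)}:
\[
|T_b|\le\Big(\sum_{K^\beta}h_{K^\beta}^4\inf_{q\in P_k(K^\beta)}\|g^\beta-q\|_{0,K^\beta}^2\Big)^{1/2}\Big(\sum_{K^\beta}h_{K^\beta}^{-4}\|w_h^\beta-E_h w_h^\beta\|_{0,K^\beta}^2\Big)^{1/2}\le C\Big(\sum_{K^\beta}h_{K^\beta}^4\inf_{q\in P_k(K^\beta)}\|g^\beta-q\|_{0,K^\beta}^2\Big)^{1/2}|w_h^\beta|_{2,h}.
\]

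For $T_a$ I would insert an arbitrary $\upsilon_h\in W_{2h}^\beta$ and write $T_a=(\bM^\beta(w^\beta)-\bM_h^\beta(\upsilon_h),\bK_h^\beta(w_h^\beta-E_h w_h^\beta))_\beta+(\bM_h^\beta(\upsilon_h),\bK_h^\beta(w_h^\beta-E_h w_h^\beta))_\beta$. The first summand is bounded by $C\,|w^\beta-\upsilon_h|_{2,h}\,|w_h^\beta-E_h w_h^\beta|_{2,h}\le C\,|w^\beta-\upsilon_h|_{2,h}\,|w_h^\beta|_{2,h}$ via Cauchy--Schwarz and \textup{(iii)} (using $\bM^\beta(w^\beta)-\bM_h^\beta(\upsilon_h)=\mathcal{C}_2\bK_h^\beta(w^\beta-\upsilon_h)$). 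The second summand vanishes: since $\upsilon_h|_{K^\beta}\in P_2(K^\beta)$, the tensor $\bM_h^\beta(\upsilon_h)$ is piecewise constant, so an elementwise integration by parts removes the volume contribution and leaves a sum of boundary integrals which, after a further integration by parts along each $\partial K^\beta$ in the tangential direction, involve only the vertex values and the edgewise normal-derivative averages of $w_h^\beta-E_h w_h^\beta$ --- all zero by property \textup{(i)}. Taking the infimum over $\upsilon_h$ gives $|T_a|\le C\inf_{\upsilon_h\in W_{2h}^\beta}|w^\beta-\upsilon_h|_{2,h}\,|w_h^\beta|_{2,h}$; combined with the bound for $T_b$, this is the claim. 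Note that only $w^\beta\in H_0^2(\beta)$ and $g^\beta\in L^2(\beta)$ are used, so no additional elliptic regularity is required.

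I expect the main obstacle to be the construction and verification of the enriching operator $E_h$ satisfying \textup{(i)}--\textup{(iii)} simultaneously: reconciling the reproduction of the Morley degrees of freedom with the element-moment conditions against $P_k$ (which forces a sufficiently rich conforming space plus a carefully designed, interior-supported bubble correction), and proving the stability bound \textup{(iii)} through the Morley jump seminorms. A secondary technical point is the bookkeeping in $T_a$ showing that the boundary integrals genuinely collapse onto the Morley degrees of freedom; this hinges on $\bM_h^\beta(\upsilon_h)$ being piecewise constant and on the coincidence, for Morley functions, of the edge-midpoint value of the normal derivative with its edge average. These are precisely the technical arguments carried out in \cite{gudi2010new, hu2014new}.
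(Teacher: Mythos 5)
The paper does not prove this lemma but cites it directly from \cite{hu2014new, gudi2010new}; your proposal reconstructs essentially the medius-analysis/enriching-operator proof found in those references, and the argument is correct as laid out. In particular, the decomposition into $T_a$ and $T_b$ via the conforming companion $E_h$, the use of the solution equation tested against $E_h w_h^\beta \in W_2^\beta$, the elementwise constancy of $\bM_h^\beta(\upsilon_h)$ plus the vertex- and edge-average reproduction in property \textup{(i)} to kill the second piece of $T_a$, and the $P_k$-orthogonality in \textup{(ii)} to produce the oscillation term are exactly the ingredients of the cited works, so there is nothing to contrast against the (absent) in-paper argument.
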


\begin{theorem} 
\label{thm:convergence-rates-nonconforming}
Under the regularity assumptions \eqref{regularity},  $(\bsig_h^\alpha, \bu_h^\alpha, \bu_h^\beta)$ as the exact solution of \eqref{pro: weak-discrete-nc} admits the following convergence rate
$$
\begin{aligned}
\interleave ( \bsig^\alpha - \bsig_h^\alpha, \bu^\beta- \bu_h^\beta) \interleave_{h} + \| \bu^\alpha - \bu_h^\alpha \|_{0,\alpha} \leq C(h_\alpha + h_\beta).    
\end{aligned}
$$
\end{theorem}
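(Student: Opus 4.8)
The plan is to verify that Assumptions $\mathrm{(A1)}$--$\mathrm{(A5)}$ hold for the concrete choice of finite element spaces in this subsection, so that the convergence estimate follows immediately from Theorem~\ref{thm:conti-nonconforming-error} with $k=1$. Assumptions $\mathrm{(A1)}$--$\mathrm{(A3)}$ were already established in Theorem~\ref{thm:wellposed-nonconforming-discrete}, so the work concentrates on $\mathrm{(A4)}$ and $\mathrm{(A5)}$; the regularity assumptions \eqref{regularity} guarantee $\bsig^\alpha \in \tilde{\Sigma}^\alpha$, which is needed so that $a_h(\bullet,\bullet;\bullet,\bullet)$ is defined at the exact solution (cf. Remark~\ref{remark:ah}).

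\textbf{Approximation property $\mathrm{(A4)}$.} First I would bound each term in $\interleave(\bsig^\alpha-\btau_h^\alpha,\bu^\beta-\bv_h^\beta)\interleave_h + \inf \|\bu^\alpha-\bv_h^\alpha\|_{0,\alpha}$ by choosing suitable interpolants. Using the $L^2$ projection onto $V_h^\alpha$ and \eqref{regularity} with $\bu^\alpha \in H^2(\alpha;\mathbb{R}^3)$ gives $\|\bu^\alpha-P_h^\alpha\bu^\alpha\|_{0,\alpha} \le C h_\alpha^2 |\bu^\alpha|_{2,\alpha}$; the Fortin/canonical interpolant $I_h^\alpha$ of the $\mathrm{HuMa2018}$ element applied to $\bsig^\alpha \in H^1(\alpha;\mathbb{S})$ gives $\|\bsig^\alpha - I_h^\alpha\bsig^\alpha\|_{\divt,h} \le C h_\alpha |\bsig^\alpha|_{1,\alpha}$, where the boundary term $\sum_F h_F\|(\bsig^\alpha-I_h^\alpha\bsig^\alpha)\bn^\alpha\|_{0,F}^2$ is controlled by a trace inequality plus \eqref{ineq:core-discrete}-type scaling. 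For the plate variables, the vectorial Lagrange interpolant handles $v_{Ih}^\beta$ with $|v_I^\beta-\text{interp}|_{1,\beta}\le C h_\beta |v_I^\beta|_{2,\beta}$ using $\bu^\beta\in H^2(\beta;\mathbb{R}^2)\times H^3(\beta)$, and the Morley interpolant $\Pi_h^{\mathrm{M}}$ gives $|v_3^\beta-\Pi_h^{\mathrm{M}}v_3^\beta|_{2,h}\le C h_\beta |v_3^\beta|_{3,\beta}$. Summing yields $\mathrm{(A4)}$ with $k=1$.

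\textbf{Consistency $\mathrm{(A5)}$.} This is the main obstacle. Starting from \eqref{def:T0} with the exact solution, I integrate by parts on each $K^\alpha$ and each $K^\beta$ and use the equilibrium and junction equations \eqref{equil}, \eqref{bd}, \eqref{jun}. The $\alpha$-interior terms in $b_h$ reproduce $(\divt\bsig^\alpha,\cdot)$ and cancel against the load; the interface terms must be handled with care because $\Sigma_h^\alpha \not\subset \Sigma^\alpha$ and the meshes on $\Gamma$ are non-matching. Here is where the operator $\Pi^\beta$ and Lemma~\ref{lem:interface-term} enter: the nonconformity contribution $(\btau_h^\alpha\bn^\alpha, \Pi^\beta\bv_h^\beta - \bv_h^\beta)_\Gamma$ (more precisely, the mismatch between $\hat{\bu}^\beta$ and $\Pi^\beta$ applied to the discrete trace) is bounded using $\|\bv^\beta - \Pi^\beta\bv^\beta\|_{\text{on }\Gamma}$-type estimates combined with Lemma~\ref{lem:interface-term}, which holds \emph{without mesh assumptions}. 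The remaining genuinely nonconforming piece is the Morley consistency error $(\bM_h^\beta(\bu^\beta),\bK_h^\beta(\bv_{3h}^\beta))_\beta - \langle \text{distributional force}, \bv_{3h}^\beta\rangle$ on $\beta$; this is precisely the quantity estimated by Lemma~\ref{lem-E32}, which, with $\bu^\beta$-component $w^\beta\in H^3(\beta)$ and the corresponding right-hand side in $L^2(\beta)$, gives a bound of order $h_\beta(|w^\beta|_{3,\beta} + h_\beta\|g^\beta\|_{0,\beta})|\bv_{3h}^\beta|_{2,h} \le C h_\beta \interleave(\btau_h^\alpha,\bv_h^\beta)\interleave_h$. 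Collecting all contributions and dividing by $\interleave(\btau_h^\alpha,\bv_h^\beta)\interleave_h$ gives $\mathrm{(A5)}$ with $k=1$.

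\textbf{Conclusion.} With $\mathrm{(A1)}$--$\mathrm{(A5)}$ verified and $k=1$, Theorem~\ref{thm:conti-nonconforming-error} yields $\interleave(\bsig^\alpha-\bsig_h^\alpha,\bu^\beta-\bu_h^\beta)\interleave_h + \|\bu^\alpha-\bu_h^\alpha\|_{0,\alpha} \le C(h_\alpha + h_\beta)$, which is the claim. I expect the delicate point to be keeping the interface duality pairings well-defined and correctly bounded across the non-matching $\Gamma$-meshes — this is exactly what Lemma~\ref{lem:interface-term} was built to handle, so the proof should route through it rather than through any quasi-uniformity or mesh-compatibility hypothesis.
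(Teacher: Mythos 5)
Your roadmap is exactly the paper's: verify Assumptions (A4)--(A5) for the chosen spaces and then invoke Theorem~\ref{thm:conti-nonconforming-error} with $k=1$, using the Fortin interpolant for the stress in (A4), Lemma~\ref{lem-E32} for the Morley consistency, and Lemma~\ref{lem:interface-term} for the $\Gamma$-interface mismatch. The (A4) verification as you sketch it is correct.

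Your account of the consistency error $T_0$ in (A5), however, has a gap. You claim that integration by parts on each $K^\alpha$ produces $(\divt\bsig^\alpha,\cdot)$ ``which cancels against the load,'' and you then treat only two residual contributions: the $\Gamma$-interface mismatch through $\Pi^\beta$ and the Morley consistency on $\beta$. But $T_0$ in \eqref{def:T0} contains no $(\divt\bsig^\alpha,\cdot)$ term, and the actual computation is: $(\mathcal{C}_0^{-1}\bsig^\alpha,\btau_h^\alpha)_\alpha = (\bvar^\alpha(\bu^\alpha),\btau_h^\alpha)_\alpha$ integrated by parts element-by-element cancels $(\divt_h\btau_h^\alpha,\bu^\alpha)_\alpha$ exactly and leaves $(\btau_h^\alpha\bn^\alpha,\bu^\alpha)_{\partial\mathcal{T}_h^\alpha}$; after using $\bu^\alpha=\bu^\beta$ on $\Gamma$ and the vanishing normal moments of $\btau_h^\alpha\bn^\alpha$ on $\partial\alpha\setminus\Gamma$, this reduces to a sum of face-jump integrals over the \emph{interior} faces of $\mathcal{T}_h^\alpha$ --- the paper's $T_{01}$, which is the genuine nonconformity of the $H(\divt;\mathbb{S})$ element and is not absorbed by any load term. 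Estimating $T_{01}$ requires a dedicated step that your sketch does not contain: subtract the Scott--Zhang interpolant $\Pi_{SZ}^\alpha\bu^\alpha$ (its contribution vanishes because the normal moments of $\btau_h^\alpha\bn^\alpha$ up to degree one are continuous across interior faces by \eqref{def-sigmah}, and $\Pi_{SZ}^\alpha\bu^\alpha$ is continuous piecewise linear), then bound $(\btau_h^\alpha\bn^\alpha,\bu^\alpha-\Pi_{SZ}^\alpha\bu^\alpha)_{\partial\mathcal{T}_h^\alpha\setminus\Gamma}$ by trace, inverse, and the estimate \eqref{SZ}, giving an $O(h_\alpha)\,\interleave(\btau_h^\alpha,\bv_h^\beta)\interleave_h$ bound. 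Without this argument the face-nonconformity of $\Sigma_h^\alpha$ on interior faces is simply unaccounted for. Your treatment of the remaining two pieces (the paper's $T_{02}$ and $T_{03}$) via Lemma~\ref{lem-E32} and Lemma~\ref{lem:interface-term} is correct and matches the paper.
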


\begin{proof}
According to Theorem \ref{thm:conti-nonconforming-error} and \ref{thm:wellposed-nonconforming-discrete}, it only needs to prove Assumptions $\mathrm{(A4)}$--$\mathrm{(A5)}$. The proof is divided into four steps.

\textbf{Step 1} proves Assumption $\mathrm{(A4)}$. Recall the Fortin operator $I_h^\alpha$ and the $L^2$ projection operator $P_h^\alpha$ in the proof of Theorem \ref{thm:wellposed-nonconforming-discrete}. For all $K^\alpha \in \mathcal{T}_h^\alpha$, it holds that
\begin{equation}
\label{A4:1}
\| \btau^\alpha - I_h^\alpha \btau^\alpha \|_{0,K^\alpha} + h_\alpha | \btau^\alpha - I_h^\alpha \btau^\alpha|_{1,K^\alpha}\leq C h_\alpha \| \btau^\alpha\|_{1,K^\alpha}.
\end{equation}
The trace inequality and \eqref{A4:1} give
\begin{equation}
\label{A4:3}
\begin{aligned}
h_F \| \btau^\alpha - I_h^\alpha \btau^\alpha\|_{0,F}^2 & \leq Ch_F \| \btau^\alpha - I_h^\alpha \btau^\alpha\|_{0,K^\alpha} \| \btau^\alpha - I_h^\alpha \btau^\alpha\|_{1,K^\alpha}\\
& \leq C h_{K^\alpha}^2 \| \btau^\alpha\|_{1,K^\alpha}^2 \text{ for all } F \in \mathcal{F}_\Gamma^\alpha \text{ with } F = K^\alpha \cap \Gamma.
\end{aligned}
\end{equation}
Since $\divt_h  I_h^\alpha = P_h^\alpha \divt$, it holds that
\begin{equation}
\label{A4:2}
\|\divt \bsig^\alpha - \divt_h  I_h^\alpha \bsig^\alpha \|_{0,\alpha} \leq C h_\alpha | \divt \bsig^\alpha|_{1,\alpha} = C h_\alpha |\bft^\alpha|_{1,\alpha}.     
\end{equation}
Recall the definition of $\|\bullet\|_{\divt,h}$ in \eqref{def:divh}. The combination of \eqref{A4:1}--\eqref{A4:2} leads to
\begin{equation}
\label{interpolation-alpha}
\| \btau^\alpha - I_h^\alpha \btau^\alpha \|_{\divt,h} \leq C h_\alpha (  | \bft^\alpha |_{1,\alpha} +  | \btau^\alpha |_{1,\alpha} ).     
\end{equation}
Let $I_{Ih}^\beta$ and $I_{3h}^\beta$ be the standard interpolation operators related to $W_{1h}^\beta$ and $W_{2h}^\beta$ in \cite{brenner2008mathematical} and  \cite{li2014new}, respectively.  The following error estimates hold
\begin{equation}
\label{interpolation-beta}
\sum_{I=1}^2| v_{I}^\beta - I_{Ih}^\beta v_{I}^\beta |_{1,\beta} + | v_{3}^\beta - I_{3h}^\beta v_{3}^\beta |_{2,h} \leq C h_\beta \left( \sum_{I=1}^2 | v_{I}^\beta|_{2,\beta} + |v_3^\beta|_{3,\beta} \right).
\end{equation}
This and \eqref{interpolation-alpha} give 
\begin{equation}
\label{eq:41201}
\begin{aligned}
& \inf_{(\btau_h^\alpha, \bv_h^\beta)\in \Sigma_h^\alpha \times W_h^\beta} \interleave ( \bsig^\alpha - \btau_h^\alpha, \bu^\beta- \bv_h^\beta) \interleave_{h}  \\
& \leq \left( \| \bsig^\alpha - I_h^\alpha\bsig^\alpha \|_{\divt,h}^2 + \sum_{I=1}^2| v_{I}^\beta - I_{Ih}^\beta v_{I}^\beta |_{1,\beta}^2 + | v_{3}^\beta - I_{3h}^\beta v_{3}^\beta |_{2,h}^2\right)^\frac{1}{2}  \\
& \leq C h_\alpha (|\bft^\alpha|_{1,\alpha}+  | \bsig^\alpha |_{1,\alpha} ) + C h_\beta\left( \sum_{I=1}^2 | v_{I}^\beta|_{2,\beta} + |v_3^\beta|_{3,\beta} \right).     
\end{aligned}   
\end{equation}
It holds that
\begin{equation}
\label{eq:41202}
\inf_{\bv_h^\alpha \in V_h^\alpha} \| \bu^\alpha - \bv_h^\alpha \|_{0,\alpha}  \leq  \| \bu^\alpha - P_h^\alpha \bu^\alpha\|_{0,\alpha } \leq C h_\alpha |\bu^\alpha|_{2,\alpha}.
\end{equation}
The combination of \eqref{eq:41201} and \eqref{eq:41202} proves Assumption $\mathrm{(A4)}$ as follows:
$$
\begin{aligned}
& \inf_{(\btau_h^\alpha, \bv_h^\beta)\in \Sigma_h^\alpha \times W_h^\beta} \interleave ( \bsig^\alpha - \btau_h^\alpha, \bu^\beta- \bv_h^\beta) \interleave_{h} +\inf_{\bv_h^\alpha \in V_h^\alpha} \| \bu^\alpha - \bv_h^\alpha \|_{0,\alpha} \\
& \leq C h_\alpha (|\bft^\alpha|_{1,\alpha}+|\bu^\alpha|_{2,\alpha}) + C h_\beta\left( \sum_{I=1}^2 | v_{I}^\beta|_{2,\beta} + |v_3^\beta|_{3,\beta} \right).
\end{aligned}
$$

\textbf{Step 2} analyzes $T_0$ in Assumption $\mathrm{(A5)}$. Suppose that $\btau_h^\alpha\in\Sigma_h^\alpha$ and $\bv^\beta_h=(v_{1h}^\beta,v_{2h}^\beta,v_{3h}^\beta)\in W_h^\beta$. Note that $W_{1h}^\beta \subset W_1^\beta$ is a conforming continuous piecewise linear finite element space. Recall the definition of $\bV$ in \eqref{def: bV}. The choice of $(\bv^\alpha,\bv^\beta) \in \bV$ with $\bv^\beta =  (v_{1h}^\beta,v_{2h}^\beta, 0)$ and $\bv^\alpha = \bv^\beta$ on $\Gamma$ in \eqref{eq:eqivalence2} combined with $\bsig^\alpha \bn^\alpha=0$ on $\partial \alpha \backslash \Gamma$ yields
\begin{equation*}
\label{eq: 41202}
(\sigma_{IJ}^\beta(\bu^\beta), \varepsilon_{IJ}^\beta(\bv_h^\beta))_\beta + ( \sigma_{Ij}^\alpha n_{j}^\alpha, {v}_{Ih}^\beta )_\Gamma - (f_{I}^\beta, v_{Ih}^\beta)_\beta = 0.      
\end{equation*}  
A substitution of this into $T_0$ from \eqref{def:T0} shows
$$
T_0 = a_h(\bsig^\alpha, \bu^\beta; \btau_h^\alpha, \bv_h^\beta) + b_h(\bu^\alpha; \btau_h^\alpha, \bv_h^\beta) - (\bft^\beta, \bv_h^\beta) \coloneqq T_{01} + T_{02} +T_{03}
$$
where 
$$
\begin{aligned}
T_{01} &\coloneqq (\mathcal{C}_0^{-1} \bsig^\alpha, \btau_h^\alpha)_\alpha - ( \btau_h^\alpha \bn^\alpha, \bu^\beta)_{\Gamma} + (\divt_h \btau_h^\alpha, \bu^\alpha)_\alpha,\\
T_{02} & \coloneqq 
(\bM_h^\beta ( \bu^\beta), \bK_h^\beta (\bv_{h}^\beta))_\beta + (\sigma_{3j}^\alpha n_{j}^\alpha, v_{3h}^\beta)_{\Gamma} - (f_3^\beta, v_{3h}^\beta)_\beta, \\
T_{03} & \coloneqq (\btau_h^\alpha \bn^\alpha, \bu^\beta - \Pi^\beta \bu^\beta)_{\Gamma} + (\sigma_{3j}^\alpha n_{j}^\alpha, \Pi_3^\beta v_{3h}^\beta - v_{3h}^\beta)_{\Gamma}.
\end{aligned}
$$

\textbf{Step 3} estimates $T_{01}$ and $T_{02}$. Recall the Scott-Zhang interpolation operator $\Pi_{SZ}^\alpha$ in \eqref{SZ}. The integration by parts, \eqref{def-sigmah} and $\bu^\alpha = \bu^\beta$ on $\Gamma$ show 
    \begin{equation}
    \label{T010}
    \begin{aligned}
    T_{01} = (\btau_h^\alpha \bn^\alpha, \bu^\alpha)_{\partial \mathcal{T}_h^\alpha} - (\btau_h^\alpha \bn^\alpha, \bu^\beta)_\Gamma  = (\btau_h^\alpha \bn^\alpha, \bu^\alpha - \Pi_{SZ}^\alpha \bu^\alpha )_{\partial \mathcal{T}_h^\alpha\backslash \Gamma}.
    \end{aligned} 
    \end{equation}
Given $K^\alpha \in \mathcal{T}_h^\alpha$, for any $F \in \partial K^\alpha$, the Cauchy-Schwarz inequality, the trace inequality, \eqref{SZ} and the inverse estimate give
$$
\begin{aligned}
| ( \btau_h^\alpha \bn^\alpha, \bu^\alpha - \Pi_{SZ}^\alpha \bu^\alpha)_{F} |  &\leq \| \btau_h^\alpha \|_{0,F} \| \bu^\alpha - \Pi_{SZ}^\alpha \bu^\alpha \|_{0,F} \\
&\leq C \| \btau_h^\alpha \|_{0,F} \| \bu^\alpha - \Pi_{SZ}^\alpha \bu^\alpha \|_{0,K^\alpha}^{\frac{1}{2}} \| \bu^\alpha - \Pi_{SZ}^\alpha \bu^\alpha \|_{1,K^\alpha}^{\frac{1}{2}} \\
 & \leq C h_{K^\alpha}^{\frac{3}{2}} \| \btau_h^\alpha \|_{0,F} \| \bu^\alpha \|_{2,S_{K^\alpha}} \leq C h_{\alpha} \| \btau_h^\alpha \|_{0,K^\alpha} \| \bu^\alpha \|_{2,S_{K^\alpha}}. 
\end{aligned}
$$
Recall the definition of $\mathcal{F}_\Gamma^\alpha$ in \eqref{def:F-gamma}. This and the Cauchy-Schwarz inequality lead to 
\begin{equation}
\label{T012}
\begin{aligned}
|( \btau_h^\alpha \bn^\alpha, \bu^\alpha - \Pi_{SZ}^\alpha \bu^\alpha)_{\partial \mathcal{T}_h^\alpha \backslash \Gamma}| &= |\sum_{K^\alpha \in \mathcal{T}_h^\alpha}\sum_{F\in \partial K^\alpha \backslash \mathcal{F}_\Gamma^\alpha} ( \btau_h^\alpha \bn^\alpha, \bu^\alpha - \Pi_{SZ}^\alpha \bu^\alpha)_{F} | \\
& \leq C h_\alpha \|\btau_h^\alpha\|_{0,\alpha} \|\bu^\alpha\|_{2,\alpha}.    
\end{aligned}
\end{equation}
The combination of \eqref{T010}--\eqref{T012} leads to 
\begin{equation}
\label{T01}
|T_{01}| \leq C h_{\alpha} \interleave (\btau_h^\alpha, \bv_h^\beta)\interleave_{h} \| \bu^\alpha \|_{2,\alpha}.
\end{equation}
Under the regularity assumptions \eqref{regularity}, it holds that $f_3^\beta - \sigma_{3j}^\alpha n_j^\alpha \in L^2(\beta)$ with $\sigma_{3j}^\alpha n_j^\alpha$ denotes its zero extension on $\beta$. Problem \eqref{pro:displacement-based} gives that $u_3^\beta$ is the solution of 
$$
(\bM^\beta(u_3^\beta), \bK^\beta(v^\beta))_\beta = (f_3^\beta - \sigma_{3j}^\alpha n_j^\alpha, v^\beta)_\beta \text{ for all } v^\beta \in W_2^\beta.
$$
The choice of $g^\beta = f_3^\beta - \sigma_{3j}^\alpha n_j^\alpha$, $w^\beta = u_3^\beta$ and $w_h = v_{3h}^\beta$ in Lemma \ref{lem-E32} and the standard interpolation error estimations give
$$
\begin{aligned}
|T_{02}| & \leq 
C h_\beta \left\{ |u_3^\beta|_{3,\beta}+\left(\sum_{K^\beta\in\mathcal{T}_h^\beta}h_{K^\beta}^2\inf_{q\in {P}_{k}(K^\beta)}\|f_3^\beta - \sigma_{3j}^\alpha n_j^\alpha-q\|_{0,K^\beta}^2\right)^{\frac{1}{2}}\right\} |v_{3h}^\beta|_{2,h}\\
& \leq C h_\beta \left( |u_3^\beta|_{3,\beta}+ h_\beta \| f_3^\beta\|_{0,\beta} + h_\beta  \| \bsig^\alpha \|_{1,\alpha}  
\right) |v_{3h}^\beta|_{2,h}.
\end{aligned}
$$
This gives
\begin{equation}
\label{T02}
|T_{02}| \leq C h_\beta \interleave (\btau_h^\alpha, \bv_h^\beta)\interleave_{h} \left( |u_3^\beta|_{3,\beta}+ \| f_3^\beta\|_{0,\beta} +  \| \bu^\alpha \|_{2,\alpha}  \right).
\end{equation}

\textbf{Step 4} estimates $T_{03}$ and concludes the proof of Assumption $\mathrm{(A5)}$.
Lemma \ref{lem:interface-term} and $\bu^\beta - \Pi^\beta \bu^\beta \in H^1(\beta;\mathbb{R}^3)$ give
\begin{equation}
\label{T031}
\begin{aligned}
(\btau_h^\alpha\bn^\alpha, \bu^\beta - \Pi^\beta \bu^\beta)_{\Gamma} &\leq C \| \btau_h^\alpha\|_{\divt,h} \|\bu^\beta - \Pi^\beta \bu^\beta\|_{1,\Gamma}\\
&\leq C h_\beta \interleave (\btau_h^\alpha, \bv_h^\beta)\interleave_{h} \| \bu^\beta \|_{2,\beta}.
\end{aligned}
\end{equation}
The Cauchy-Schwarz inequality, the interpolation estimate and the trace inequality lead to 
\begin{equation}
\label{T032}
\begin{aligned}
(\sigma_{3j}^\alpha n_{j}^\alpha, \Pi_3^\beta v_{3h}^\beta - v_{3h}^\beta)_{\Gamma} & \leq \| \sigma_{3j}^\alpha n_{j}^\alpha\|_{0,\Gamma}    \| \Pi_3^\beta v_{3h}^\beta - v_{3h}^\beta\|_{0,\Gamma} \\
& \leq C h_\beta^2 \| \sigma_{3j}^\alpha n_{j}^\alpha\|_{0,\Gamma}   \| v_{3h}^\beta \|_{2,h} \\
& \leq C h_\beta^2 \interleave (\btau_h^\alpha, \bv_h^\beta)\interleave_{h} \| \bsig^\alpha\|_{1,\alpha}.
\end{aligned}
\end{equation}
The combination of \eqref{T031}--\eqref{T032} gives to 
\begin{equation}
\label{T03}
|T_{03}| \leq C h_{\beta} \interleave (\btau_h^\alpha, \bv_h^\beta)\interleave_{h} (\| \bu^\beta \|_{2,\beta}+  \| \bu^\alpha\|_{2,\alpha} ).    
\end{equation}
The summation of the estimates in \eqref{T01}, \eqref{T02} and \eqref{T03}  gives
$$
\frac{|T_0|}{ \interleave (\btau_h^\alpha, \bv_h^\beta)\interleave_{h}} \leq Ch_\alpha \|\bu^\alpha\|_{2,\alpha} + Ch_\beta \left( \|f_3^\beta\|_{0,\beta} + \sum_{I=1}^2 \|u_I^\beta\|_{2,\beta}+ \|u_3^\beta\|_{3,\beta}+\|\bu^\alpha\|_{2,\alpha} \right).
$$
The proof is completed.
\end{proof}


\section{Numerical experiments}

This section presents numerical experiments to validate the theoretical results. Additionally, the problem is reduced to an interface problem using the domain decomposition, which is solved effectively by the conjugate gradient iteration.

\subsection{Convergence rate test}

Let $\alpha = (-\frac{1}{2},\frac{1}{2})^2\times (0,1)$ and $\beta = (-1,1)^2 \times \{0\}$, which are rigidly connected along the interface $\Gamma = (-\frac{1}{2},\frac{1}{2})^2 \times \{0 \}$. Consider the following  
exact solution for the equation \eqref{equil}
$$
\bu^\alpha = \left(  
\begin{aligned}
& \sin(\pi x) \sin(\pi y)(1+z^2) \\
& \sin(\pi x) \sin(\pi y)(1+z^2) \\
& (1-x^2)^2(1-y^2)^2(1+z^2)
\end{aligned}
\right),
\quad
\bu^\beta = \left(  
\begin{aligned}
& \sin(\pi x) \sin(\pi y) \\
& \sin(\pi x) \sin(\pi y) \\
& (1-x^2)^2(1-y^2)^2
\end{aligned}
\right).
$$
The forces $\bft^{\alpha},  \bft^{\beta}$ can be obtained from above exact solution correspondingly. Let $E_{\alpha}=100, {\nu}_\alpha = {\nu}_{\beta} = 0.3, t_{\beta} = 0.02$, and choose $E_\beta$ such that the flexible rigidity $D_\beta = \frac{E_\beta t_\beta^3}{12(1-\nu_\beta^2)}=1$. 

\begin{figure}[!h]
  \centering
  \begin{subfigure}[b]{0.49\textwidth}
    \includegraphics[width=\textwidth]{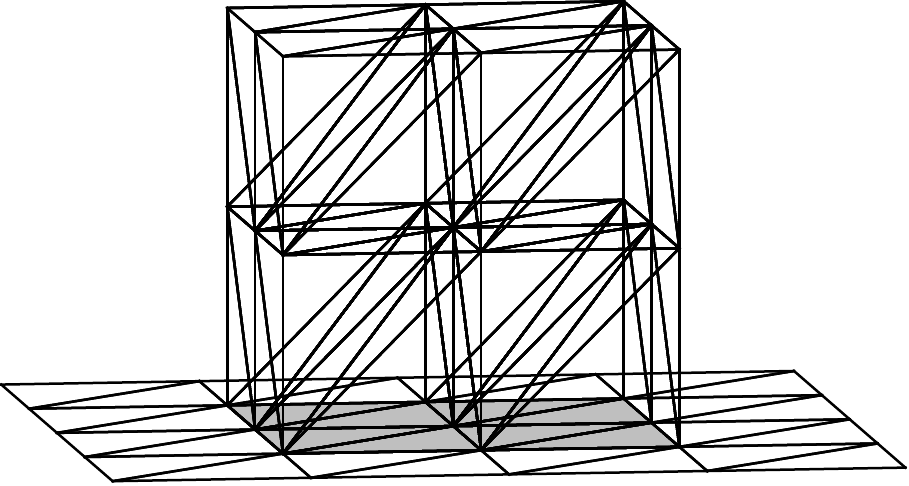} 
    \caption{Initial mesh with matching mesh.}
    \label{fig:initialmesh1}
  \end{subfigure}
  \hfill 
  \begin{subfigure}[b]{0.49\textwidth}
    \includegraphics[width=\textwidth]{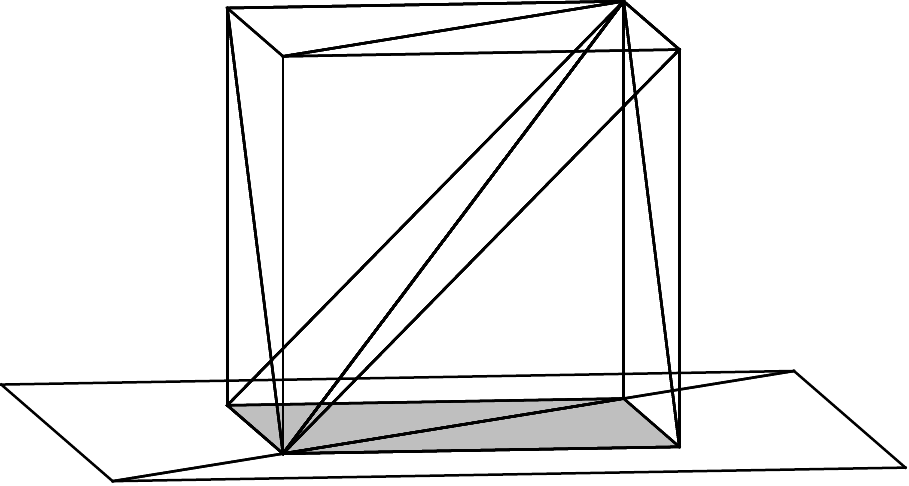} 
    \caption{Initial mesh with non-matching mesh.}
    \label{fig:initialmesh2}
  \end{subfigure}
  \caption{Initial meshes.}
\end{figure}

To compare with the displacement-based formulation, this paper uses the nonconforming finite element method in \cite{huang2006finite} for solving the coupled body-plate model. 
In the following, $u_{*}^\beta \coloneqq (u_{1}^\beta, u_2^\beta)$ and $u_{*h}^\beta \coloneqq (u_{1h}^\beta, u_{2h}^\beta)$ are introduced to simplify notations below. 
The numerical results are listed in Table \ref{tab:PPnonconforming}, which show that 
$$
\| \bu^\alpha -\bu_h^\alpha \|_{1,\alpha} +  | u_{*}^\beta - u_{*h}^\beta|_{1,\beta} + | u_{3}^\beta - u_{3h}^\beta|_{2,\beta} \leq C(h_\alpha + h_\beta).
$$

Table \ref{tab:MPconforming} and Table \ref{tab:MPconforming_nonmatch} show the convergence rates of $(\bsig^\alpha, \bu^\alpha, \bu^\beta)$ for the conforming finite element method presented in Sec \ref{sub:conformingelement} with matching and non-matching meshes, respectively. The initial mesh with matching and non-matching mesh on the interface are shown in Figure \ref{fig:initialmesh1} and \ref{fig:initialmesh2}, respectively. The mesh is uniformly refined. The numerical results show 
\begin{equation*}
\| (\bsig^\alpha-\bsig_h^\alpha, \bu^\beta-\bu_h^\beta) \interleave + \interleave \bu^\alpha -\bu_h^\alpha \|_{0,\alpha} \leq C (h_\alpha^4+h_\beta^4),
\end{equation*}
which coincide with Theorem \ref{theoremConvergence}.

Similarly, Table \ref{tab:MPnonconforming} and Table \ref{tab:MPnonconforming_nonmatch} show the convergence rates of $(\bsig^\alpha, \bu^\alpha, \bu^\beta)$ for the nonconforming finite element method presented in Sec \ref{sub:nonconformingelement} with matching and non-matching meshes, respectively. The numerical results show
$$
 \interleave ( \bsig^\alpha - \bsig_h^\alpha, \bu^\beta- \bu_h^\beta) \interleave_{h} + \| \bu^\alpha - \bu_h^\alpha \|_{0,\alpha} 
\leq C ( h_\alpha + h_\beta),
$$
which coincide with Theorem \ref{thm:convergence-rates-nonconforming}.

\begin{table}[!h]
  \centering
  \begin{tabular}{ccccccc}
    \toprule
     \multicolumn{7}{c}{$P_1$ Lagrange element for body.} \\
    \midrule
    Mesh & $\|\bsig^\alpha - \bsig_h^\alpha\|_{0,\alpha}$ & rate & $\| \bu^\alpha - \bu_h^\alpha\|_{0,\alpha}$ & rate & & \\
    \midrule
    1 & 7.87763E+02 & - & 6.15168E-01 & - \\
    2 & 4.41518E+02 & 0.84 & 1.98381E-01 & 1.63 \\
    3 & 2.31723E+02 & 0.93 & 5.82705E-02 & 1.77 \\
    4 & 1.18309E+02 & 0.97 & 1.58125E-02 & 1.88 \\
    \midrule
     \multicolumn{7}{c}{Vectorial $P_1$ Lagrange element for the plane elasticity model.} \\
    \midrule
    Mesh & $|u_{*}^\beta - u_{*h}^\beta |_{1,\beta}$ & rate & $\|u_{*}^\beta - u^\beta_{*h}\|_{0,\beta}$ & rate & & \\
    \midrule
   1 & 3.65458E+00 & - & 5.68435E-01 & - \\
    2 & 1.96858E+00 & 0.89 & 1.61584E-01 & 1.81 \\
    3 & 1.00158E+00 & 0.98 & 4.16259E-02 & 1.96 \\
    4 & 5.02941E-01 & 0.99 & 1.04835E-02 & 1.99 \\
    \midrule
     \multicolumn{7}{c}{$P_2$ Morley element for the plate bending model.} \\
    \midrule
    Mesh & $|u_3^\beta - u_{3h}^\beta |_{2,h}$ & rate & $| u_3^\beta -  u^\beta_{3h}|_{1,h}$ & rate & $\|u_3^\beta - u_{3h}^\beta \|_{0,\beta}$ & rate \\
    \midrule
    1 & 7.30964E+00 & - & 1.23055E+00 & - & 6.84019E-01 & - \\
    2 & 3.78862E+00 & 0.95 & 3.69399E-01 & 1.74 & 2.04226E-01 & 1.74 \\
    3 & 1.90860E+00 & 0.99 & 9.96592E-02 & 1.89 & 5.53819E-02 & 1.88 \\
    4 & 9.55713E-01 & 1.00 & 2.54901E-02 & 1.97 & 1.41778E-02 & 1.97 \\
    \bottomrule
  \end{tabular}
\caption{Displacement-based nonconforming method with matching mesh.}
\label{tab:PPnonconforming}
\end{table}

\begin{table}[ht!]
  \centering
  \begin{tabular}{ccccccc}
    \toprule
     \multicolumn{7}{c}{$P_4$--$P_3$ $H(\divt;\mathbb{S})$ element in \cite{Hu2015,HuZhang2015} for body.} \\
    \midrule
    Mesh & $\|\bsig^\alpha - \bsig_h^\alpha\|_{0,\alpha}$ & rate & $\| \bu^\alpha - \bu_h^\alpha\|_{0,\alpha}$ & rate & & \\
    \midrule
    1 & 1.10854E+00 & - & 5.04072E-03 & - \\
    2 & 3.53174E-02 & 4.97 & 2.89330E-04 & 4.12 \\
    3 & 1.13190E-03 & 4.96 & 1.81806E-05 & 3.99 \\
    \midrule
     \multicolumn{7}{c}{Vectorial $P_4$ Lagrange element for the plane elasticity model.} \\
    \midrule
    Mesh & $|u_{*}^\beta - u_{*h}^\beta |_{1,\beta}$ & rate & $\|u_{*}^\beta - u^\beta_{*h}\|_{0,\beta}$ & rate & & \\
    \midrule
    1 & 4.28286E-02 & - & 1.58633E-03 & - \\
    2 & 2.84600E-03 & 3.91 & 5.33692E-05 & 4.89 \\
    3 & 1.81003E-04 & 3.97 & 1.71439E-06 & 4.96 \\
    \midrule
     \multicolumn{7}{c}{$P_5$ Argyris element for plate bending model.} \\
    \midrule
    Mesh & $|u_3^\beta - u_{3h}^\beta |_{2,\beta}$ & rate & $| u_3^\beta - u^\beta_{3h}|_{1,\beta}$ & rate & $\|u_3^\beta - u_{3h}^\beta \|_{0,\beta}$ & rate \\
    \midrule
    1 & 1.09546E-01 & - & 6.73757E-03 & - & 6.45495E-04 & - \\
    2 & 6.29546E-03 & 4.12 & 1.73323E-04 & 5.28 & 7.99916E-06 & 6.33 \\
    3 & 3.46697E-04 & 4.18 & 4.36422E-06 & 5.31 & 1.04375E-07 & 6.26 \\
    \bottomrule
  \end{tabular}
\caption{Conforming method with matching mesh.}
\label{tab:MPconforming}
\end{table}

\begin{table}[ht!]
  \centering
  \begin{tabular}{ccccccc}
    \toprule
     \multicolumn{7}{c}{$P_4$--$P_3$ $H(\divt;\mathbb{S})$ element in \cite{Hu2015,HuZhang2015} for body.} \\
    \midrule
    Mesh & $\|\bsig^\alpha - \bsig_h^\alpha\|_{0,\alpha}$ & rate & $\| \bu^\alpha - \bu_h^\alpha\|_{0,\alpha}$ & rate & & \\
    \midrule
    1 & 5.18392E+01 & - & 8.76128E-01 & - \\
    2 & 2.77751E+00 & 4.22 & 3.41237E-02 & 4.68 \\
    3 & 2.74505E-01 & 3.34 & 3.38345E-04 & 6.66 \\
    4 & 1.33368E-02 & 4.36 & 1.84796E-05 & 4.19 \\
    \midrule
     \multicolumn{7}{c}{Vectorial $P_4$ Lagrange element for the plane elasticity model.} \\
    \midrule
    Mesh & $|u_{*}^\beta - u_{*h}^\beta |_{1,\beta}$ & rate & $\|u_{*}^\beta - u^\beta_{*h}\|_{0,\beta}$ & rate & & \\
    \midrule
    1 & 2.63977E+00 & - & 7.07376E-01 & - \\
    2 & 2.25157E-01 & 3.55 & 3.26771E-02 & 4.44 \\
    3 & 4.28286E-02 & 2.39 & 1.58632E-03 & 4.36 \\
    4 & 2.84600E-03 & 3.91 & 5.33691E-05 & 4.89 \\
    \midrule
     \multicolumn{7}{c}{$P_5$ Argyris element for plate bending model.} \\
    \midrule
    Mesh & $|u_3^\beta - u_{3h}^\beta |_{2,\beta}$ & rate & $|u_3^\beta - u_{3h}^\beta |_{1,\beta}$ & rate & $\|u_3^\beta - u_{3h}^\beta \|_{0,\beta}$ & rate \\
    \midrule
    1 & 7.50136E+00 & - & 1.92032E+00 & - & 8.25911E-01 & - \\
    2 & 1.25485E+00 & 2.58 & 1.58705E-01 & 3.60 & 3.54216E-02 & 4.54 \\
    3 & 1.09360E-01 & 3.52 & 6.60994E-03 & 4.59 & 5.55617E-04 & 5.99 \\
    4 & 6.29485E-03 & 4.12 & 1.72372E-04 & 5.26 & 6.48075E-06 & 6.42 \\
    \bottomrule
  \end{tabular}
\caption{Conforming method with non-matching mesh.}
\label{tab:MPconforming_nonmatch}
\end{table}

\begin{table}[ht!]
  \centering
  \begin{tabular}{ccccccc}
    \toprule
    \multicolumn{7}{c}{Nonconforming $H(\divt;\mathbb{S})$ in \cite{HuMa2018} for plate.} \\
    \midrule
    Mesh & $\|\bsig^\alpha - \bsig_h^\alpha\|_{0,\alpha}$ & rate & $\| \bu^\alpha - \bu_h^\alpha\|_{0,\alpha}$ & rate & & \\
    \midrule
    1 & 8.91246E+01 & - & 4.83575E-01 & - \\
    2 & 3.45424E+01 & 1.37 & 1.46539E-01 & 1.72 \\
    3 & 1.58159E+01 & 1.13 & 3.88831E-02 & 1.91 \\
    \midrule
    \multicolumn{7}{c}{Vectorial $P_1$ Lagrange element for the plane elasticity model.} \\
    \midrule
    Mesh & $|u_{*}^\beta - u_{*h}^\beta |_{1,\beta}$ & rate & $\|u_{*}^\beta - u^\beta_{*h}\|_{0,\beta}$ & rate & & \\
    \midrule
    1 & 3.65458E+00 & - & 5.68418E-01 & - \\
    2 & 1.96858E+00 & 0.89 & 1.61575E-01 & 1.81 \\
    3 & 1.00158E+00 & 0.98 & 4.16234E-02 & 1.96 \\
    \midrule
    \multicolumn{7}{c}{$P_2$ Morley element for plate bending model.} \\
    \midrule
    Mesh & $|u_3^\beta - u_{3h}^\beta |_{2,h}$ & rate & $| u_3^\beta - u^\beta_{3h}|_{1,h}$ & rate & $\|u_3^\beta - u_{3h}^\beta \|_{0,\beta}$ & rate \\
    \midrule
    1 & 7.24326E+00 & - & 1.23117E+00 & - & 7.10979E-01 & - \\
    2 & 3.73113E+00 & 0.96 & 3.51072E-01 & 1.81 & 2.05529E-01 & 1.79 \\
    3 & 1.89744E+00 & 0.98 & 9.28534E-02 & 1.92 & 5.40035E-02 & 1.93 \\
    \bottomrule
  \end{tabular}
    \caption{Nonconforming method with matching mesh.}
    \label{tab:MPnonconforming}
\end{table}

\begin{table}[ht!]
  \centering
  \resizebox{\textwidth}{!}{
  \begin{tabular}{ccccccc}
    \toprule
    \multicolumn{7}{c}{Nonconforming $H(\divt;\mathbb{S})$ in \cite{HuMa2018} for plate.} \\
    \midrule
    Mesh & $\|\bsig^\alpha - \bsig_h^\alpha\|_{0,\alpha}$ & rate & $\| \bu^\alpha - \bu_h^\alpha\|_{0,\alpha}$ & rate & & \\
    \midrule
    1 & 2.28331E+02 & - & 9.50977E-01 & - \\
    2 & 1.27772E+02 & 0.84 & 7.93196E-01 & 0.26 \\
    3 & 5.52674E+01 & 1.21 & 4.51941E-01 & 0.81 \\
    4 & 2.21096E+01 & 1.32 & 1.41301E-01 & 1.68 \\
    5 & 9.25351E+00 & 1.26 & 3.79345E-02 & 1.90 \\
    \midrule
    \multicolumn{7}{c}{Vectorial $P_1$ Lagrange element for the plane elasticity model.} \\
    \midrule
    Mesh & $|u_{*}^\beta - u_{*h}^\beta |_{1,\beta}$ & rate & $\|u_{*}^\beta - u^\beta_{*h}\|_{0,\beta}$ & rate & & \\
    \midrule
    1 & 5.29438E+00 & - & 1.86596E+00 & - \\
    2 & 6.50074E+00 & -0.30 & 7.61935E-01 & 1.29 \\
    3 & 3.65458E+00 & 0.83 & 5.68422E-01 & 0.42 \\
    4 & 1.96858E+00 & 0.89 & 1.61576E-01 & 1.81 \\
    5 & 1.00158E+00 & 0.98 & 4.16237E-02 & 1.96 \\
    \midrule
    \multicolumn{7}{c}{$P_2$ Morley element for plate bending model.} \\
    \midrule
    Mesh & $|u_3^\beta - u_{3h}^\beta |_{2,h}$ & rate & $| u_3^\beta -  u^\beta_{3h}|_{1,h}$ & rate & $\|u_3^\beta - u_{3h}^\beta \|_{0,\beta}$ & rate \\
    \midrule
    1 & 7.50136E+00 & - & 1.92032E+00 & - & 8.25911E-01 & - \\
    2 & 8.54794E+00 & -0.19 & 2.02470E+00 & -0.08 & 1.02964E+00 & -0.32 \\
    3 & 7.18546E+00 & 0.25 & 1.20728E+00 & 0.75 & 6.92333E-01 & 0.57 \\
    4 & 3.72538E+00 & 0.95 & 3.48245E-01 & 1.79 & 2.03548E-01 & 1.77 \\
    5 & 1.89702E+00 & 0.97 & 9.25395E-02 & 1.91 & 5.38186E-02 & 1.92 \\
    \bottomrule
  \end{tabular}}
    \caption{Nonconforming method with non-matching mesh.}
    \label{tab:MPnonconforming_nonmatch}
\end{table}

\subsection{Iterative techniques based on domain decomposition}

Following similar procedures as in \cite{lazarov2001iterative}, this subsection applies the conjugate gradient iteration for the reduced interface problem of the coupled body-plate model. Theoretical results below indicate that the iteration count of the conjugate gradient iteration is independent of  mesh sizes. 

To derive the reduced system for the discrete problem \eqref{pro: weak-discrete-c}, introduce the following problem: Given $\bft^\alpha \in L^2(\alpha;\mathbb{R}^3)$ and  $\bft^\beta \in L^2(\beta;\mathbb{R}^3)$, find $(\tilde{\bsig}_h^{\alpha}, \tilde{\bu}_h^{\alpha}, \tilde{\bu}_h^{\beta}) \in \Sigma_h^\alpha \times V_h^{\alpha}\times W_h^\beta$ such that
\begin{equation}
\label{reduced}
\begin{aligned}
    (\mathcal{C}_0^{-1} \tilde{\bsig}_h^\alpha, \btau_h^{\alpha})_\alpha + (\divt \btau_h^{\alpha}, \tilde{\bu}_h^{\alpha})_\alpha &= 0, \\
 (\divt \tilde{\bsig}_h^{\alpha}, \bv_h^{\alpha})_\alpha &= -(\bft^{\alpha}, \bv_h^{\alpha})_\alpha, \\
     - (\bN^{\beta}(\tilde{\bu}_h^\beta), \be^{\beta}(\bv_h^\beta))_\beta - (\bM^{\beta}(\tilde{\bu}_h^\beta), \bK^{\beta}(\bv_h^\beta))_\beta &= - (\bft^{\beta}, \bv_h^{\beta})_\beta,
\end{aligned}
\end{equation}
for all $(\btau_h^\alpha,\bv_h^\alpha,\bv_h^\beta) \in \Sigma_h^\alpha \times V_h^\alpha \times W_h^\beta$. Then the solution of \eqref{pro: weak-discrete-c} can be represented as
$$
(\bsig_h^\alpha, \bu_h^\alpha, \bu_h^\beta) = ( \bar{\bsig}_h^\alpha + \tilde{\bsig}_h^\alpha, \bar{\bu}_h^\alpha+\tilde{\bu}_h^\alpha,\bar{\bu}_h^\beta+ \tilde{\bu}_h^\beta),
$$
where the remainder $( \bar{\bsig}_h^\alpha, \bar{\bu}_h^\alpha,\bar{\bu}_h^\beta)$ satisfies
\begin{equation}
\label{eq:left}
\begin{aligned}
    (\mathcal{C}_0^{-1} \bar{\bsig}_h^\alpha, \btau_h^{\alpha})_\alpha + (\divt \btau_h^{\alpha}, \bar{\bu}_h^{\alpha})_\alpha &= (\btau_h^\alpha \bn^\alpha, \bar{\bu}_h^\beta+\tilde{\bu}_h^\beta)_\Gamma, \\
 (\divt \bar{\bsig}_h^{\alpha}, \bv_h^{\alpha})_\alpha &= 0, \\
     - (\bN^{\beta}(\bar{\bu}_h^\beta), \be^{\beta}(\bv_h^\beta))_\beta - (\bM^{\beta}(\bar{\bu}_h^\beta), \bK^{\beta}(\bv_h^\beta))_\beta &= ( (\bar{\bsig}_h^\alpha + \tilde{\bsig}_h^\alpha)\bn^\alpha, \bv_h^\beta )_\Gamma,
\end{aligned}
\end{equation}
for all $(\btau_h^\alpha,\bv_h^\alpha,\bv_h^\beta) \in \Sigma_h^\alpha \times V_h^\alpha \times W_h^\beta$. Since \eqref{reduced} can be solved by two sub-equations independently, it remains to solve \eqref{eq:left}.

Introduce the discrete spaces: 
$$Q_h^{\Gamma} \coloneqq \{\bv_h^\beta|_{\Gamma};\bv_h^\beta \in  W_{h}^\beta \} \text{ and } Q_h^{\Gamma,\prime} \coloneqq \{ (\btau_h^\alpha \bn^\alpha)|_{\Gamma}; \btau_h^\alpha \in \Sigma_h^\alpha \}.$$
Define the operator
$$
E: Q_h^{\Gamma} \longrightarrow Q_h^{\Gamma,\prime} \text{ by } E \lambda = (\bsig_h^\alpha(\lambda)\bn^\alpha)|_\Gamma,
$$
where for a given $\lambda \in Q_h^\Gamma$ the pair $(\bsig^\alpha_h(\lambda), \bu^\alpha_h(\lambda)) \in \Sigma_h^\alpha \times V_h^\alpha$ is the solution of
\begin{equation*}
\begin{aligned}
    (\mathcal{C}_0^{-1} \bsig_h^\alpha(\lambda), \btau_h^{\alpha})_\alpha + (\divt \btau_h^{\alpha}, \bu_h^{\alpha}(\lambda))_\alpha &= (\btau_h^{\alpha}  \bn^{\alpha}, \lambda )_{\Gamma}, \\
 (\divt \bsig_h^{\alpha}(\lambda), \bv_h^{\alpha})_\alpha &= 0, 
\end{aligned}
\end{equation*}
for all $(\btau_h^\alpha,\bv_h^\alpha) \in \Sigma_h^\alpha \times V_h^\alpha$.
The operator $E$ is a discrete Dirichlet--Neumann mapping on $\Gamma$. Further, define the operator
$$
S: Q_h^{\Gamma,\prime} \longrightarrow Q_h^{\Gamma} \text{ by } S \lambda^* = \bu^\beta_h(\lambda^*) = (s_{2,h}(\lambda^*))|_\Gamma,$$
where for a given $\lambda^* \in Q_h^{\Gamma,\prime}$ the function $\bu^\beta_h(\lambda^*) \in Q_h^{\Gamma}$ is the solution of
\begin{equation*}
- (\bN^{\beta}(\bu_h^\beta(\lambda^*)), \be^{\beta}(\bv_h^\beta))_\beta - (\bM^{\beta}(\bu_h^\beta(\lambda^*)), \bK^{\beta}(\bv_h^\beta))_\beta = ( \lambda^*, \bv_h^{\beta})_{\Gamma} \text{ for all } \bv_h^\beta \in W_h^\beta.
\end{equation*}
The operator $S$ is a discrete Neumann--Dirichlet mapping on $\Gamma$. In terms of the operators of $S$ and $E$, it follows from \eqref{eq:left} that 
\begin{equation*}
\begin{aligned}
(\bar{\bsig}_h^\alpha \bn^\alpha)|_{\Gamma} &= E( \tilde{\bu}_h^{\beta}|_{\Gamma} +  \bar{\bu}_h^\beta|_{\Gamma} ), \\
\bar{\bu}_h^\beta|_{\Gamma}  &= S( (\tilde{\bsig}_h^\alpha\bn^\alpha) |_{\Gamma} + (\bar{\bsig}_h^\alpha \bn^\alpha) |_{\Gamma}).
\end{aligned}
\end{equation*}
Elimination of $(\bar{\bsig}_h^\alpha \bn^\alpha)|_{\Gamma}$ gives
\begin{equation}
\label{main-eq1}
(I-SE) \bar{\bu}_h^\beta|_{\Gamma} = S( (\tilde{\bsig}_h^\alpha\bn^\alpha) |_{\Gamma} + E(\tilde{\bu}_h^\beta|_{\Gamma})).    
\end{equation}

For all $(\bu^\beta,\bv^\beta) \in \bV$, define
\begin{equation*}
\label{def:a-beta}
a_{\beta}(\bu^\beta, \bv^\beta) \coloneqq (\bN^{\beta}(\bu^\beta), \be^{\beta}(\bv^\beta))_{\beta\backslash\Gamma} + (\bM^{\beta}(\bu^\beta), \bK^{\beta}(\bv^\beta))_{\beta \backslash\Gamma}.
\end{equation*}
For any $\bu_h^\beta|_{\Gamma} \in Q_h^\Gamma$, define its discrete $a_\beta$ extension $U_{\bu_h^\beta|_{\Gamma}} \in W_h^\beta$ satisfying $U_{\bu_h^\beta|_{\Gamma}}|_\Gamma = \bu_h^\beta|_{\Gamma}$ which is the unique solution to:
\begin{equation}
\label{def:harmonic-extension}
a_\beta(U_{\bu_h^\beta|_{\Gamma}}, \bw_h^\beta) = 0 \text{ for all } \bw_h^\beta \in W_h^\beta \text{ such that } \bw_h^\beta|_{\Gamma} = 0.    
\end{equation}
The energy $a_\beta(U_{\bu_h^\beta|_{\Gamma}}, U_{\bu_h^\beta|_{\Gamma}})$ defines an inner product $\langle\bu_h^\beta|_{\Gamma}, \bu_h^\beta|_{\Gamma} \rangle_{U}$ on $Q_h^\Gamma$. 

\begin{theorem}
The operator $I - SE : Q_h^\Gamma \rightarrow Q_h^\Gamma$ is symmetric and positive definite with respect to the inner product $\langle \bullet, \bullet \rangle_{U}$. Moreover, $I - SE$ has a bounded condition number independent of $h_\alpha$ and $h_\beta$.
\end{theorem}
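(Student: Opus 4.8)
The idea is to read $E$ as the discrete Steklov--Poincar\'e (Dirichlet--Neumann) operator of the body $\alpha$ in its mixed formulation, and $S$ as minus the inverse of the discrete Steklov--Poincar\'e operator of the plate $\beta$, so that $I-SE$ becomes, up to the congruence induced by $\langle\bullet,\bullet\rangle_U$, the interface Schur complement of the full coupled system. Symmetry and positive definiteness will then follow from an exact algebraic identity, and the uniform conditioning from classical trace and extension estimates on the fixed domains $\alpha$ and $\beta$. Throughout, write $U_\phi$ for the discrete extension of $\phi\in Q_h^\Gamma$ from \eqref{def:harmonic-extension} and set $a_\alpha^{S}(\phi,\psi):=(\mathcal{C}_0^{-1}\bsig_h^\alpha(\phi),\bsig_h^\alpha(\psi))_\alpha$, where $(\bsig_h^\alpha(\phi),\bu_h^\alpha(\phi))$ is the body solve with interface datum $\phi$.

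\textbf{Symmetry and positive definiteness.} First I would record the representation $(E\phi,\psi)_\Gamma=a_\alpha^{S}(\phi,\psi)$ for all $\phi,\psi\in Q_h^\Gamma$: testing the first equation defining $(\bsig_h^\alpha(\psi),\bu_h^\alpha(\psi))$ with $\btau_h^\alpha=\bsig_h^\alpha(\phi)$ and using $\divt\bsig_h^\alpha(\phi)=0$ (from the second equation, since $\divt\Sigma_h^\alpha\subset V_h^\alpha$) gives exactly this, and it is manifestly symmetric and nonnegative. Next, the problem defining $\bu_h^\beta(\boldsymbol{\mu})$ for $\boldsymbol{\mu}\in Q_h^{\Gamma,\prime}$ forces the discrete harmonicity $a_\beta(\bu_h^\beta(\boldsymbol{\mu}),\bw_h^\beta)=0$ for every $\bw_h^\beta\in W_h^\beta$ with $\bw_h^\beta|_\Gamma=0$ (the right-hand side then vanishes); hence $\bu_h^\beta(\boldsymbol{\mu})=U_{S\boldsymbol{\mu}}$ by uniqueness of the extension. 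Combining these,
\[
\langle SE\phi,\psi\rangle_U=a_\beta\big(U_{SE\phi},U_\psi\big)=a_\beta\big(\bu_h^\beta(E\phi),U_\psi\big)=-\,(E\phi,U_\psi)_\Gamma=-\,a_\alpha^{S}(\phi,\psi),
\]
where the third equality tests the equation defining $\bu_h^\beta(E\phi)$ with $\bw_h^\beta=U_\psi$ and the last uses $U_\psi|_\Gamma=\psi$ together with the $E$-representation. Consequently
\[
\langle (I-SE)\phi,\psi\rangle_U=\langle\phi,\psi\rangle_U+a_\alpha^{S}(\phi,\psi)\qquad\text{for all }\phi,\psi\in Q_h^\Gamma,
\]
which is symmetric in $(\phi,\psi)$; taking $\psi=\phi$ gives $\langle (I-SE)\phi,\phi\rangle_U=\langle\phi,\phi\rangle_U+a_\alpha^{S}(\phi,\phi)\ge\langle\phi,\phi\rangle_U>0$ for $\phi\neq0$. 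This proves that $I-SE$ is symmetric and positive definite with respect to $\langle\bullet,\bullet\rangle_U$, and in particular its smallest generalized eigenvalue is $\ge1$.

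\textbf{Condition number.} By the displayed identity the generalized eigenvalues of $I-SE$ are $\lambda=1+a_\alpha^{S}(\phi,\phi)/\langle\phi,\phi\rangle_U$, so it remains to show $a_\alpha^{S}(\phi,\phi)\le C\langle\phi,\phi\rangle_U$ with $C$ independent of $h_\alpha,h_\beta$. For the numerator, since $\divt\bsig_h^\alpha(\phi)=0$ and $\bsig_h^\alpha(\phi)\bn^\alpha=0$ on $\partial\alpha\backslash\Gamma$, the first body equation with $\btau_h^\alpha=\bsig_h^\alpha(\phi)$ gives $a_\alpha^{S}(\phi,\phi)=(\bsig_h^\alpha(\phi)\bn^\alpha,\phi)_\Gamma\le C\|\bsig_h^\alpha(\phi)\|_{\divt,\alpha}\|\phi\|_{\frac12,\Gamma}=C\,a_\alpha^{S}(\phi,\phi)^{1/2}\|\phi\|_{\frac12,\Gamma}$, using a bounded extension of $\phi$ to $\partial\alpha$ and the trace estimate exploited in the proof of Theorem~\ref{thm-wellposeconti} (in the nonconforming case, Lemma~\ref{lem:interface-term}); hence $a_\alpha^{S}(\phi,\phi)\le C\|\phi\|_{\frac12,\Gamma}^2$. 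For the denominator, the trace inequality bounds $\|\phi\|_{\frac12,\Gamma}^2$ by $\sum_{I=1}^{2}|(U_\phi)_I|_{1,\beta}^2+|(U_\phi)_3|_{2,\beta}^2$, and Korn's and Poincar\'e's inequalities together with the clamped boundary conditions of $W_h^\beta$ bound the latter by $C\,a_\beta(U_\phi,U_\phi)=C\langle\phi,\phi\rangle_U$. Chaining these, $a_\alpha^{S}(\phi,\phi)\le C\langle\phi,\phi\rangle_U$ with $C$ depending only on $\alpha$, $\beta$, the material tensors and the extension constant; thus $1\le\lambda\le1+C$ and $\kappa(I-SE)\le1+C$, uniformly in $h_\alpha$ and $h_\beta$.

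\textbf{Expected main difficulty.} The crux is the algebraic step: one must check carefully that the discrete plate--load problem defining $S$ indeed produces a discretely $a_\beta$-harmonic function, so that $\bu_h^\beta(E\phi)$ coincides with $U_{SE\phi}$ and the interface contributions on $\Gamma$ collapse into the clean identity $\langle(I-SE)\phi,\psi\rangle_U=\langle\phi,\psi\rangle_U+a_\alpha^{S}(\phi,\psi)$ — this is what makes both the symmetry and the lower spectral bound exact rather than merely approximate. Once that identity is in hand the conditioning is routine, since it uses only continuous (hence mesh-size-independent) trace, Korn, Poincar\'e and extension inequalities on the fixed subdomains; in particular the non-matching meshes on $\Gamma$ play no role, because $E$ and $S$ are assembled from independent subdomain solves and are coupled only through the exact $L^2(\Gamma)$ pairing.
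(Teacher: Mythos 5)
The paper itself omits a proof of this theorem, stating only that the argument is similar to the one in the cited reference; so there is no paper proof to compare against directly. Your approach — representing $(E\phi,\psi)_\Gamma = a_\alpha^S(\phi,\psi)$, identifying the plate solve with the discrete $a_\beta$-harmonic extension, and collapsing everything into the exact identity $\langle(I-SE)\phi,\psi\rangle_U = \langle\phi,\psi\rangle_U + a_\alpha^S(\phi,\psi)$ — is the standard Schur-complement / Steklov--Poincar\'e argument and is the right route. The conditioning step via trace, Korn and Poincar\'e is also the standard route and is mesh-independent as you say.

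There is, however, a genuine subtlety that your write-up glosses over, and it concerns the central algebraic step. The paper defines $a_\beta$ by integrating over $\beta\backslash\Gamma$, not over all of $\beta$, whereas the plate subproblem defining $S$ is posed over all of $\beta$ (the full bilinear form, call it $A_\beta$, has integrals over $\beta$). When you test the plate equation with $\bv_h^\beta=U_\psi$ you actually obtain $A_\beta(\bu_h^\beta(E\phi),U_\psi) = -(E\phi,U_\psi)_\Gamma$, not $a_\beta(\bu_h^\beta(E\phi),U_\psi) = -(E\phi,U_\psi)_\Gamma$. Since $U_\psi|_\Gamma=\psi\neq 0$ in general, the plate-energy contribution over $\Gamma$ does not vanish, so with the paper's literal $a_\beta$ you get
\[
\langle(I-SE)\phi,\psi\rangle_U \;=\; \langle\phi,\psi\rangle_U + a_\alpha^S(\phi,\psi) + R_\Gamma(SE\phi,\psi),
\]
with $R_\Gamma$ denoting the plate energy pairing restricted to $\Gamma$. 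The extra term $R_\Gamma(SE\phi,\psi)$ is not manifestly symmetric in $(\phi,\psi)$, so both the symmetry and the clean lower bound $\lambda\ge 1$ are lost. (Note that the definition of the extension $U_\phi$ itself is insensitive to whether $a_\beta$ is over $\beta$ or $\beta\backslash\Gamma$, because the test functions vanish on $\Gamma$; only the \emph{energy} $a_\beta(U_\phi,U_\phi)$ differs.) Your identity holds exactly when $\langle\cdot,\cdot\rangle_U$ is built from the full plate energy over $\beta$ — which is what you implicitly use — and I believe the $\beta\backslash\Gamma$ in the paper's definition is a slip, since with it the theorem's symmetry claim does not obviously follow. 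You should state this dependence explicitly: your proof establishes the theorem for the inner product induced by the plate energy over $\beta$, and without that correction the key identity, and hence the proof, does not close.
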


\begin{proof}
The proof follows a similar argument to the one in \cite{lazarov2001iterative} and is omitted for brevity.  
\end{proof}

Similar results can be obtained for the nonconforming finite element method \eqref{pro: weak-discrete-nc}.  Tables \ref{tab:cg_C} and \ref{tab:cg_NC} present numerical experiments to evaluate the efficiency and robustness of the conjugate gradient solver for the reduced interface systems of the conforming finite element method and the nonconforming finite element method, respectively. In each table, the number of conjugate gradient iterations $N_\text{it}$ required to reduce the relative residual below a tolerance of $10^{-6}$, the final relative residual (Final Rel. Res.) achieved, and the average reduction factor $\rho_\text{avg}$ are reported. The convergence appears to be fairly insensitive to the mesh sizes, all in good agreement with the
theory.

\begin{table}[htbp]
  \centering
  \begin{tabular}{c ccc c ccc}
    \toprule
    & \multicolumn{3}{c}{{Matching Mesh}} & &\multicolumn{3}{c}{{Non-matching Mesh}} \\
    \cmidrule(lr){1-4} \cmidrule(lr){5-8}
     Mesh & $N_\text{it}$ & Final Rel. Res. & $\rho_\text{avg}$ & Mesh & $N_\text{it}$ & Final Rel. Res. & $\rho_\text{avg}$ \\
    \midrule
    1 & 5 & 5.48E-08 & 0.0353 & 2 & 4 & 7.11E-07 & 0.0290 \\
    2 & 5 & 5.08E-08 & 0.0348 & 3 & 5 & 1.96E-08 & 0.0287 \\
    3 & 5 & 4.64E-08 & 0.0341 & 4 & 5 & 5.86E-08 & 0.0358 \\
    \bottomrule
  \end{tabular}
    \caption{Conjugate gradient solver for conforming method.}
      \label{tab:cg_C}
\end{table}

\begin{table}[htbp]
  \centering
  \begin{tabular}{c ccc c ccc}
    \toprule
    & \multicolumn{3}{c}{{Matching Mesh}} & &\multicolumn{3}{c}{{Non-matching Mesh}} \\
    \cmidrule(lr){1-4} \cmidrule(lr){5-8}
     Mesh & $N_\text{it}$ & Final Rel. Res. & $\rho_\text{avg}$ & Mesh & $N_\text{it}$ & Final Rel. Res. & $\rho_\text{avg}$ \\
    \midrule
    1 & 4 & 5.54E-07 & 0.0273 & 3 & 4 & 3.30E-07 & 0.0240 \\
    2 & 5 & 1.61E-08 & 0.0276 & 4 & 5 & 2.16E-08 & 0.0293 \\
    3 & 4 & 7.94E-07 & 0.0299 & 5 & 4 & 7.79E-07 & 0.0297 \\
    \bottomrule
  \end{tabular}
   \caption{Conjugate gradient solver for nonconforming method.}
     \label{tab:cg_NC}
\end{table}

\clearpage
\bibliographystyle{siamplain}
\bibliography{references}

\begin{thebibliography}{10}

\bibitem{adini1960analysis}
{\sc A.~Adini and R.~W. Clough}, {\em Analysis of plate bending by the finite
  element method}, University of California, 1960.

\bibitem{arango1998some}
{\sc J.~Arango, L.~Lebedev, and I.~Vorovich}, {\em Some boundary value problems
  and models for coupled elastic bodies}, Quart. Appl. Math., 56 (1998),
  pp.~157--172.

\bibitem{Argyris1968}
{\sc J.~Argyris, I.~Fried, and D.~W. Scharpf}, {\em The tuba family of plate
  elements for the matrix displacement method}, Aeronaut. J., 72 (1968),
  pp.~701--709.

\bibitem{arnold2005rectangular}
{\sc D.~Arnold and G.~Awanou}, {\em Rectangular mixed finite elements for
  elasticity}, Math. Models Methods Appl. Sci., 15 (2005), pp.~1417--1429.

\bibitem{arnold2008finite}
{\sc D.~Arnold, G.~Awanou, and R.~Winther}, {\em Finite elements for symmetric
  tensors in three dimensions}, Math. Comp., 77 (2008), pp.~1229--1251.

\bibitem{ArnoldWinther2002}
{\sc D.~Arnold and R.~Winther}, {\em Mixed finite elements for elasticity},
  Numer. Math., 92 (2002), pp.~401--419.

\bibitem{arnold2014nonconforming}
{\sc D.~N. Arnold, G.~Awanou, and R.~Winther}, {\em Nonconforming tetrahedral
  mixed finite elements for elasticity}, Math. Models Methods Appl. Sci., 24
  (2014), pp.~783--796.

\bibitem{arnold1985mixed}
{\sc D.~N. Arnold and F.~Brezzi}, {\em Mixed and nonconforming finite element
  methods: implementation, postprocessing and error estimates}, ESAIM Math.
  Model. Numer. Anal., 19 (1985), pp.~7--32.

\bibitem{aufranc1989numerical}
{\sc M.~Aufranc}, {\em Numerical study of a junction between a
  three-dimensional elastic structure and a plate}, Comput. Methods Appl. Mech.
  Engrg., 74 (1989), pp.~207--222.

\bibitem{boffi2013mixed}
{\sc D.~Boffi, F.~Brezzi, and M.~Fortin}, {\em Mixed finite element methods and
  applications}, vol.~44, Springer, 2013.

\bibitem{bournival2010mesh}
{\sc S.~Bournival, J.-C. Cuilli{\`e}re, and V.~Fran{\c{c}}ois}, {\em A
  mesh-geometry based method for coupling 1d and 3d elements}, Adv. Eng.
  Softw., 41 (2010), pp.~838--858.

\bibitem{brenner2008mathematical}
{\sc S.~C. Brenner}, {\em The mathematical theory of finite element methods},
  Springer, 2008.

\bibitem{chen2009ap}
{\sc C.~Chen, J.~Huang, and X.~Huang}, {\em Ap 1-p 3-nzt fem for solving
  general elastic multi-structure problems.}, J. Comput. Anal. Appl., 11
  (2009).

\bibitem{Ciarlet1989JMPA}
{\sc P.~G. Ciarlet, H.~Le~Dret, and R.~Nzengwa}, {\em Junctions between
  three-dimensional and two-dimensional linearly elastic structures}, J. Math.
  Pures Appl., 68 (1989), pp.~261--295.

\bibitem{Fengshi1996}
{\sc K.~Feng and Z.~Shi}, {\em Mathematical theory of elastic structures},
  Springer-Verlag, Berlin; Science Press Beijing, Beijing, 1996.
\newblock Translated from the 1981 Chinese original, Revised by the authors.

\bibitem{gopalakrishnan2011symmetric}
{\sc J.~Gopalakrishnan and J.~Guzm{\'a}n}, {\em Symmetric nonconforming mixed
  finite elements for linear elasticity}, SIAM J. Numer. Anal., 49 (2011),
  pp.~1504--1520.

\bibitem{gudi2010new}
{\sc T.~Gudi}, {\em A new error analysis for discontinuous finite element
  methods for linear elliptic problems}, Math. Comp., 79 (2010),
  pp.~2169--2189.

\bibitem{huang2011NMPDE}
{\sc L.~Guo and J.~Huang}, {\em Adini-{$Q_1$}-{$P_3$} {FEM} for general elastic
  multi-structure problems}, Numer. Methods Partial Differential Equ., 27
  (2011), pp.~1092--1112.

\bibitem{hansbo2005nitsche}
{\sc P.~Hansbo}, {\em Nitsche's method for interface problems in computa-tional
  mechanics}, GAMM-Mitteilungen, 28 (2005), pp.~183--206.

\bibitem{hansbo2022nitsche}
{\sc P.~Hansbo and M.~G. Larson}, {\em Nitsche’s finite element method for
  model coupling in elasticity}, Comput. Methods Appl. Mech. Engrg., 392
  (2022), p.~114707.

\bibitem{Hu2015}
{\sc J.~Hu}, {\em Finite element approximations of symmetric tensors on
  simplicial grids in {$\mathbb R^n$}: the higher order case}, J. Comput.
  Math., 33 (2015), pp.~283--296.

\bibitem{Hu20152}
{\sc J.~Hu}, {\em A new family of efficient conforming mixed finite elements on
  both rectangular and cuboid meshes for linear elasticity in the symmetric
  formulation}, SIAM J. Numer. Anal., 53 (2015), pp.~1438--1463.

\bibitem{hu2025mixed}
{\sc J.~Hu, Z.~Liu, R.~Ma, and R.~Wang}, {\em A mixed finite element method for
  coupled plates}, Comput. Methods Appl. Math., 25 (2025), pp.~601--618.

\bibitem{hu2024direct}
{\sc J.~Hu and L.~Ma}, {\em A direct finite element method for elliptic
  interface problems}, arXiv preprint arXiv:2401.16967v2,  (2025).

\bibitem{HuMa2018}
{\sc J.~Hu and R.~Ma}, {\em Conforming mixed triangular prism elements for the
  linear elasticity problem}, Int. J. Numer. Anal. Model., 15 (2018),
  pp.~228--242.

\bibitem{hu2014new}
{\sc J.~Hu, R.~Ma, and Z.~Shi}, {\em A new a priori error estimate of
  nonconforming finite element methods}, Sci. China Math., 57 (2014),
  pp.~887--902.

\bibitem{hu2016simplest}
{\sc J.~Hu, H.~Man, J.~Wang, and S.~Zhang}, {\em The simplest nonconforming
  mixed finite element method for linear elasticity in the symmetric
  formulation on n-rectangular grids}, Comput. Math. Appl., 71 (2016),
  pp.~1317--1336.

\bibitem{hu2014simple}
{\sc J.~Hu, H.~Man, and S.~Zhang}, {\em A simple conforming mixed finite
  element for linear elasticity on rectangular grids in any space dimension},
  J. Sci. Comput., 58 (2014), pp.~367--379.

\bibitem{HuZhang2015}
{\sc J.~Hu and S.~Zhang}, {\em A family of symmetric mixed finite elements for
  linear elasticity on tetrahedral grids}, Sci. China Math., 58 (2015),
  pp.~297--307.

\bibitem{huang2004numerical}
{\sc J.~Huang}, {\em Numerical solution of the elastic body-plate problem by
  nonoverlapping domain decomposition type techniques}, Math. Comp., 73 (2004),
  pp.~19--34.

\bibitem{huang2007finite}
{\sc J.~Huang, L.~Guo, and Z.~Shi}, {\em A finite element method for
  investigating general elastic multi-structures}, Comput. Math. Appl., 53
  (2007), pp.~1867--1895.

\bibitem{jianguo2005some}
{\sc J.~Huang, Z.~Shi, and Y.~Xu}, {\em Some studies on mathematical models for
  general elastic multi-structures}, Sci. China Ser. A, 48 (2005),
  pp.~986--1007.

\bibitem{huang2006finite}
{\sc J.~Huang, Z.~Shi, and Y.~Xu}, {\em Finite element analysis for general
  elastic multi-structures}, Sci. China Ser. A, 49 (2006), p.~109.

\bibitem{klarmann2022coupling}
{\sc S.~Klarmann, J.~Wackerfu{\ss}, and S.~Klinkel}, {\em Coupling 2d continuum
  and beam elements: a mixed formulation for avoiding spurious stresses},
  Comput. Mech., 70 (2022), pp.~1145--1166.

\bibitem{lazarov2001iterative}
{\sc R.~D. Lazarov, J.~E. Pasciak, and P.~S. Vassilevski}, {\em Iterative
  solution of a coupled mixed and standard galerkin discretization method for
  elliptic problems}, Numer. Linear Algebra Appl., 8 (2001), pp.~13--31.

\bibitem{li2014new}
{\sc M.~Li, X.~Guan, and S.~Mao}, {\em New error estimates of the morley
  element for the plate bending problems}, J. Comput. Appl. Math., 263 (2014),
  pp.~405--416.

\bibitem{lions2012non}
{\sc J.~L. Lions and E.~Magenes}, {\em Non-homogeneous boundary value problems
  and applications: Vol. 1}, vol.~181, Springer Science \& Business Media,
  2012.

\bibitem{ljulj20193d}
{\sc M.~Ljulj and J.~Tamba{\v{c}}a}, {\em 3d structure--2d plate interaction
  model}, Math. Mech. Solids, 24 (2019), pp.~3354--3377.

\bibitem{ljulj2021numerical}
{\sc M.~Ljulj and J.~Tamba{\v{c}}a}, {\em Numerical investigation of the 2d--1d
  structure interaction model}, Math. Mech. Solids, 26 (2021), pp.~1876--1895.

\bibitem{ming2006morley}
{\sc W.~Ming and J.~Xu}, {\em The morley element for fourth order elliptic
  equations in any dimensions}, Numer. Math., 103 (2006), pp.~155--169.

\bibitem{nguyen2014nitsche}
{\sc V.~Nguyen, P.~Kerfriden, S.~Claus, and S.~BORDAS}, {\em Nitsche’s method
  method for mixed dimensional analysis: conforming and non-conforming
  continuum-beam and continuum-plate coupling}, Comput. Methods Appl. Mech.
  Engrg.,  (2014).

\bibitem{pauly2022hilbert}
{\sc D.~Pauly and M.~Schomburg}, {\em Hilbert complexes with mixed boundary
  conditions—part 2: Elasticity complex}, Math. Methods Appl. Sci., 45
  (2022), pp.~8971--9005.

\bibitem{scott1990finite}
{\sc L.~R. Scott and S.~Zhang}, {\em Finite element interpolation of nonsmooth
  functions satisfying boundary conditions}, Math. Comp., 54 (1990),
  pp.~483--493.

\bibitem{shi1990error}
{\sc Z.~C. Shi}, {\em Error estimates of morley element}, Math. Numer. Sinica,
  12 (1990), pp.~113--118.

\bibitem{shim2002mixed}
{\sc K.~W. Shim, D.~J. Monaghan, and C.~G. Armstrong}, {\em Mixed dimensional
  coupling in finite element stress analysis}, Eng. Comput., 18 (2002),
  pp.~241--252.

\bibitem{song2010rigorous}
{\sc H.~Song and D.~Hodges}, {\em Rigorous joining of advanced
  reduced-dimensional beam models to 2-d finite element models}, in 51st
  AIAA/ASME/ASCE/AHS/ASC Structures, Structural Dynamics, and Materials
  Conference 18th AIAA/ASME/AHS Adaptive Structures Conference 12th, 2010,
  p.~2545.

\bibitem{steinbrecher2020mortar}
{\sc I.~Steinbrecher, M.~Mayr, M.~J. Grill, J.~Kremheller, C.~Meier, and
  A.~Popp}, {\em A mortar-type finite element approach for embedding 1d beams
  into 3d solid volumes}, Comput. Mech., 66 (2020), pp.~1377--1398.

\bibitem{veeser2019quasi}
{\sc A.~Veeser and P.~Zanotti}, {\em Quasi-optimal nonconforming methods for
  symmetric elliptic problems. ii---overconsistency and classical nonconforming
  elements}, SIAM J. Numer. Anal., 57 (2019), pp.~266--292.

\bibitem{wang1992mathematical}
{\sc L.~Wang}, {\em The mathematical models of some composite elastic
  structures and their finite element approximations}, Numer. Math. Sinica, 14
  (1992), pp.~358--370.

\bibitem{wieners1998coupling}
{\sc C.~Wieners and B.~I. Wohlmuth}, {\em The coupling of mixed and conforming
  finite element discretizations}, Contemp. Math., 218 (1998), p.~547.

\bibitem{yamamoto2019numerical}
{\sc T.~Yamamoto, T.~Yamada, and K.~Matsui}, {\em Numerical procedure to couple
  shell to solid elements by using nitsche’s method}, Comput. Mech., 63
  (2019), pp.~69--98.

\end{thebibliography}
\end{document}